\numberwithin{equation}{section}
\theoremstyle{plain}
\newtheorem{thm}{Theorem}[section]
\newtheorem{cor}[thm]{Corollary}
\newtheorem{lemma}[thm]{Lemma}
\newtheorem{claim}[thm]{Assumption}
\newtheorem{problem}[thm]{Problem}
\theoremstyle{definition}
\newtheorem{defi}[thm]{Definition}
\newtheorem{rem}[thm]{Remark}
\begin{document}

\title[Inverse problem of determining leading coefficient]{Inverse problem of determining time-dependent leading coefficient in the time-fractional heat equation}

\author[D. Serikbaev]{Daurenbek Serikbaev}
\address{
  Daurenbek Serikbaev:
   \endgraf
  Department of Mathematics: Analysis, Logic and Discrete Mathematics
  \endgraf
  Ghent University, Belgium
  \endgraf
  and
  \endgraf   
   Al--Farabi Kazakh National University
  \endgraf
  Almaty, Kazakhstan
  \endgraf
  and
  \endgraf 
   Institute of Mathematics and Mathematical Modeling
  \endgraf
  Almaty, Kazakhstan
  \endgraf
  {\it E-mail address} {\rm daurenbek.serikbaev@ugent.be}
  }
\author[M. Ruzhansky]{Michael Ruzhansky}
\address{
  Michael Ruzhansky:
  \endgraf
Department of Mathematics: Analysis,
Logic and Discrete Mathematics
  \endgraf
Ghent University, Belgium
  \endgraf
 and
  \endgraf
 School of Mathematical Sciences
 \endgraf
Queen Mary University of London
\endgraf
United Kingdom
\endgraf
  {\it E-mail address} {\rm michael.ruzhansky@ugent.be}
 }

\author[N. Tokmagambetov ]{Niyaz Tokmagambetov }
\address{
  Niyaz Tokmagambetov:
  \endgraf 
  Centre de Recerca Matem\'atica
  \endgraf
  Edifici C, Campus Bellaterra, 08193 Bellaterra (Barcelona), Spain
  \endgraf
  and
  \endgraf   
  Institute of Mathematics and Mathematical Modeling
  \endgraf
  125 Pushkin str., 050010 Almaty, Kazakhstan
  \endgraf  
  {\it E-mail address:} {\rm tokmagambetov@crm.cat; tokmagambetov@math.kz}
  }

\date{\today}

\thanks{This research was funded by the Science Committee of the Ministry of Education and Science of the Republic of Kazakhstan (Grant No. AP14872042), by the FWO Odysseus 1 grant G.0H94.18N: Analysis and Partial Differential Equations, and by the Methusalem programme of the Ghent University Special Research Fund (BOF) (Grant number 01M01021). MR is also supported by EPSRC grant EP/R003025/2. NT is also supported by the Beatriu de Pin\'os programme and by AGAUR (Generalitat de Catalunya) grant 2021 SGR 00087.}

\keywords{Heat equation; direct problem; coefficient inverse problem; well-posedness; positive operator; Caputo fractional derivative}

\maketitle

\begin{abstract}
In this paper, we investigate direct and inverse problems
for the time-fractional heat equation with a time-dependent leading coefficient for positive operators. 
First, we consider the direct problem, and the unique existence of the generalized solution is established. We also deduce some regularity results. Here, our proofs are based on the eigenfunction expansion method. Second, we study the inverse problem of determining the leading coefficient, and the well-posedness of this inverse problem is proved. 
\end{abstract}
\tableofcontents

\tableofcontents

\section{Introduction}
Let $T$ be a positive constant and let $\mathcal{H}$ be a separable Hilbert space. In this paper, we consider the following time-fractional heat equation
\begin{equation}\label{EQ:Frac Pseudo}
    \mathcal{D}_{t}^\alpha v(t)+\sigma(t)\mathcal{M}v(t)=f(t)\; \text{in} \;\mathcal{H},
\end{equation}
for $0<t\leq T.$ Here $\mathcal{D}_t^\alpha$ is the Caputo derivative of the order $\alpha\in(0,1)$ defined by
$$\mathcal{D}_t^\alpha v(t)=\frac{1}{\Gamma(1-\alpha)}\int_0^t (t-\tau)^{-\alpha}\frac{d}{d\tau} v(\tau)d\tau,$$
with Gamma function $\Gamma(\cdot),$ and 
$\mathcal{M}$ is the positive self-adjoint operator with the discrete spectrum $\{\mu_\xi\}_{\xi\in\mathcal{I}}$ such that $\mu_\xi\to\infty$ as $|\xi|\to\infty$ and the system of eigenfunctions $\{\omega_\xi\}_{\xi\in\mathcal{I}}$ forming an orthonormal basis in the space $\mathcal{H},$ where $\mathcal{I}$ is a countable set.

In this paper, we consider direct and inverse problems.

First, we start by stating the direct problem, the properties of which we are going to use in studying the inverse problem of determining the coefficient $\sigma(t).$
\begin{problem}[Direct problem]
\label{P:D} 
Given $\sigma(t)$ and $f(t),$ find a function $v(t)$ such that $v:[0,T]\to\mathcal{H}$ satisfies the equation \eqref{EQ:Frac Pseudo} and the initial condition
\begin{equation}\label{CON:IN}
    v(0)=h\;\text{in} \;\mathcal{H}.
\end{equation}
\end{problem}
For this direct problem, we prove the uniquely existence of the generalised solution and derive some regularity results in Section \ref{S:D}. In the main part of the paper, based on Problem \ref{P:D}, we consider the following inverse problem of finding the coefficient $\sigma(t)$ in the equation \eqref{EQ:Frac Pseudo} for the operator $\mathcal{M}$ with the positive discrete spectrum such that $\inf_{\xi\in\mathcal{I}}\mu_\xi>0,$ from the given additional information.

\begin{problem}[Inverse Problem]\label{P:DIP}
Given $f(t)$ and $h,$ find a pair of functions $(\sigma,v)$ satisfying the problem \eqref{EQ:Frac Pseudo}--\eqref{CON:IN} and the additional condition
\begin{equation}\label{CON:ADD}
    F[v(t)]=E(t),\; t\in[0,T].
\end{equation}
\end{problem}

In \eqref{CON:ADD}, $F$ is a \textbf{linear bounded functional}
$$F:\mathcal{H}^{1+\gamma}\rightarrow \mathbb{R}.$$ 
Here $\mathcal{H}^{1+\gamma}=\{v\in\mathcal{H}:\,\mathcal{M}^{1+\gamma}v\in\mathcal{H}\}$ and $\gamma\geq 0,$ and $F$ satisfies the following assumption:
\begin{equation}\label{gamma F}
    \left\{\frac{F[\omega_\xi]}{\mu_\xi^\gamma}\right\}\in l^2(\mathcal{I}).
\end{equation}
For example, let $\mathcal{H}$ be $L^2(0,1)$ and $\mathcal{M}v=-v_{xx},\; x\in (0,1),$ with homogeneous Dirichlet boundary condition. Then the operator has the eigensystem $\{(\pi k)^2, \sqrt{2}\sin{k\pi x}\}_{k\in\mathbb{N}},$ with $\mathcal{I}=\mathbb{N}.$ 
Let $F[v(t,\cdot)]:=\int_{0}^1 v(t,x)dx.$ 
Since 
\begin{equation*}F[\omega_k]=\int_0^1 \sqrt{2}\sin{k\pi x}dx=\frac{\sqrt{2}(1+(-1)^{k+1})}{k\pi}=\begin{cases}
    &\frac{2\sqrt{2}}{k\pi},\;\text{if},\;k=2n-1\;(n\in\mathbb{N});\\
    &0, \;\text{if},\;k=2n\;(n\in\mathbb{N}),
\end{cases}\end{equation*} 
we have 
$$\sum_{k\in\mathbb{N}} |F[\omega_k]|^2<\infty.$$ From this we see that $\gamma$ in \eqref{gamma F} can be taken to be $0.$ 

Without loss of generality, we can assume
\begin{equation}\label{Positiveness F}
   \text{that}\;F\not\equiv 0\; \text{and that}\; F[\omega_\xi]\geq 0, \text{for all}\;\xi\in\mathcal{I}.
\end{equation}

If, on the contrary, we find that $F[\omega_\zeta]<0$ for a certain $\zeta\in\mathcal{I}$, we can address this by considering $-\omega_\zeta$ instead. By doing so, we ensure that the properties we require are still satisfied. Specifically, $-\omega_\zeta$ remains an eigenfunction of $\mathcal{M}$ associated with the eigenvalue $\mu_\zeta$, it contributes to an orthonormal basis of $\mathcal{H}$, and we have $F[-\omega_\zeta]\geq 0$.

For this inverse problem, we obtain the following results:
\begin{itemize}
    \item The existence of the solution;
    \end{itemize}
To prove the existence of the solution of Problem \ref{P:DIP}, we reduce this inverse problem to the operator equation for the diffusion coefficient $\sigma(t).$ Then we prove the existence of a solution of this operator equation using Shauder's fixed-point theorem. 

When $\alpha=1,$ Shauder's fixed point theorem is used in many works, here we only note a few of them \cite{I93, KI12, IO16, HLI16}. To the best of our knowledge, in this paper we use Shauder's fixed point theorem to prove the existence of the solution to the obtained operator equation for the first time. That is why we will show the existence of the solution of the obtained operator equation analogously to the proof of the above-mentioned papers. Using Shauder's fixed point theorem to prove the existence result is valuable since it can be extended to other inverse problems for time-fractional differential equations.
    \begin{itemize}
        \item The continuous dependence on the data;
    \end{itemize} 
To show the continuous dependence of the solution of Problem \ref{P:DIP}, we use Gronwall type inequality \cite[Lemma 7.1.1]{H81}.
\begin{itemize}
        \item The uniqueness of the solution;
    \end{itemize}
We prove the uniqueness of the solution to Problem 1.2 by relying on the results of the continuous dependence part.

In recent years, there has been significant interest in the direct and inverse problems of time-fractional diffusion equations. Several authors have contributed to this field through their classical papers, which have explored various aspects of this problem. 

Here we will mention only the closest scientific works to our research which were done for particular cases of the operator $\mathcal{M}$. 

The problem \eqref{EQ:Frac Pseudo}--\eqref{CON:IN} when $\sigma(t)=1$ was studied in \cite{SY11}. Under suitable assumptions on given functions, the authors proved the existence and uniqueness of weak solutions to the problem \eqref{EQ:Frac Pseudo}--\eqref{CON:IN}. Moreover, the authors proved other stability and uniqueness results for some related inverse problems. This was one of the first mathematical works concerning fractional inverse problems. The maximum principle for the time-fractional diffusion equation was established by Luchko in \cite{L09}. 

For the one dimensional case, the model \eqref{EQ:Frac Pseudo}--\eqref{CON:IN} was considered in \cite{Zh16} with a homogeneous right-hand side and trivial initial condition. Z. Zhang established the existence and
uniqueness of the weak solution and deduced some regularity results. Moreover, Z. Zhang considered the inverse problem of recovering $\sigma(t)$ in \cite{Zh16}. For this inverse problem, he showed the uniqueness of $\sigma(t).$

Z. Zhang also analyzed the direct problem \eqref{EQ:Frac Pseudo}--\eqref{CON:IN} on $\mathbb{R}^n$ in \cite{Zh17}. In particular, he established the existence, uniqueness, and some regularity properties. In the same paper, Z. Zhang considered an inverse problem for recovering $\sigma(t).$ For this inverse problem he used the single point flux data
$$\sigma(t)\nabla{v}(x^*,t)\cdot \vec{n}=E(t),\;x^*\in \partial\Omega,$$
to identify the coefficient $\sigma(t).$ 
We note that this additional information does not intersect with the additional data \eqref{CON:ADD}.
In the context of recovering $\sigma(t),$ Z. Zhang introduced an appropriate operator $P$ and established its monotonicity. This fundamental property of the operator $P$ implies both the uniqueness and existence of the solution $\sigma(t)$. In other words, the author established the uniqueness and existence of the solution of the considered inverse problem by using the monotonicity of the operator $P$. This method is also applicable for Problem \ref{P:DIP}. The disadvantage of this method is that it works only for the inverse problem of identification of a time-dependent leading coefficient, but not for other types of inverse problems for time-fractional differential equations. For example, this method does not work for inverse source problems. 

Furthermore, Lopushanskyi and Lopushanska \cite{LL14} examined the same model as investigated in \cite{Zh16} for the case $\alpha\in(0, 2)$. They employed the Green function to obtain a representation of the solution $v$. Additionally, they introduced an operator for $\sigma(t)$ that guarantees the existence and uniqueness of the pair $(\sigma,v).$ 

This paper follows the following structure. The Section \ref{S:prelim} consists of preliminary results about Mittag-Leffler functions, fractional calculus, the generalized mean value theorem, the Gronwall type lemma, and definitions of generalized Sobolev spaces. In Section \ref{S:D}, we consider direct Problem \ref{P:D}, and prove the existence and uniqueness of its generalized solution. Also, we show regularity of this solution. In Section \ref{S:Inv}, we consider inverse Problem \ref{P:DIP}, and prove its well-posedness. In Sections \ref{S:L} and \ref{S:F}, we give examples of the operator $\mathcal{M}$ and the functional $F,$ respectively. 
In Section \ref{S:gamma}, for the particular cases of $\mathcal{M}$ and ${F},$ we show how to find the value of $\gamma$ in \eqref{gamma F}.
In the last section, we include as an Appendix some classical theorems which are used in this paper.

\section{Preliminary materials}\label{S:prelim}

\subsection{Generalized Mittag-Leffler function} In this subsection, we will recall the definition of the generalized Mittag-Leffler function and give its necessary properties which we will use in our investigation. The generalized Mittag-Leffler function is
$$E_{\alpha,\beta}(z)=\sum_{k=0}^\infty \frac{z^k}{\Gamma(k\alpha+\beta)},\;z\in\mathbb{C}.$$

\begin{lemma}\label{L:converge E}\cite[Theorem 1.6]{P99} 
Suppose that $\alpha\in(0,2)$, $\beta$ is an arbitrary real number and, $\pi\alpha/2<\mu<\min\{\pi,\pi\alpha\}$. Then
there exists a positive constant $C'$ such that the inequality holds:
\begin{equation*}\label{EST: Mittag}
|E_{\alpha,\beta}(z)|\leq\frac{C'}{1+|z|},
\end{equation*}
for all $\mu\leq|arg(z)|\leq\pi$ and $|z|\geq 0.$
\end{lemma}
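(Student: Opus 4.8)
The plan is to split the statement into two regimes — the behaviour of $E_{\alpha,\beta}$ near $z=0$ and its decay as $|z|\to\infty$ — and then combine them. Near the origin nothing is needed beyond the fact that $E_{\alpha,\beta}$ is an entire function of $z$, hence continuous: setting $M:=\sup_{|z|\le 1}|E_{\alpha,\beta}(z)|<\infty$ and noting $(1+|z|)^{-1}\ge\tfrac12$ on the disc $|z|\le 1$, one already gets $|E_{\alpha,\beta}(z)|\le 2M(1+|z|)^{-1}$ there. It therefore suffices to prove a bound $|E_{\alpha,\beta}(z)|\le C_1|z|^{-1}$ for all $z$ with $|z|\ge 1$ and $\mu\le|\arg z|\le\pi$; since $1+|z|\le 2|z|$ for such $z$, this yields $|E_{\alpha,\beta}(z)|\le 2C_1(1+|z|)^{-1}$, and taking $C'=2\max\{M,C_1\}$ finishes the proof.

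To obtain the decay bound I would first fix an auxiliary angle $\theta$ with $\pi\alpha/2<\theta<\mu$, which is possible precisely because $\mu>\pi\alpha/2$; since in addition $\theta<\mu<\min\{\pi,\pi\alpha\}$, the number $\theta/\alpha$ lies in $(\pi/2,\pi)$, so $\cos(\theta/\alpha)<0$. Starting from the classical Hankel-type integral representation $E_{\alpha,\beta}(z)=\frac{1}{2\pi i}\int_{\mathrm{Ha}}\frac{u^{\alpha-\beta}e^{u}}{u^{\alpha}-z}\,du$ \cite{P99} and substituting $\zeta=u^{\alpha}$, one arrives at
$$E_{\alpha,\beta}(z)=\frac{1}{2\pi\alpha i}\int_{\gamma(1;\theta)}\frac{e^{\zeta^{1/\alpha}}\,\zeta^{(1-\beta)/\alpha}}{\zeta-z}\,d\zeta,\qquad \theta<|\arg z|\le\pi,$$
where $\gamma(1;\theta)$ is the contour made of the two rays $\{\arg\zeta=\pm\theta,\ |\zeta|\ge 1\}$ together with the arc $\{|\zeta|=1,\ |\arg\zeta|\le\theta\}$, oriented with $\arg\zeta$ non-decreasing. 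By Cauchy's theorem the right-hand side is independent of the admissible choice of $\theta$ (the only pole, at $\zeta=z$, is not crossed under such deformations), and in the range $\theta<|\arg z|\le\pi$ no residue correction term appears, since $z$ is not enclosed by the contour.

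Next I would estimate the integrand on $\gamma(1;\theta)$. Because every point $\zeta$ of that contour satisfies $|\arg\zeta|\le\theta$ while $|\arg z|\ge\mu$, the angle between $\zeta$ and $z$ is at least $\mu-\theta>0$; an elementary planar estimate then gives $|\zeta-z|\ge\sin(\mu-\theta)\,\max\{|\zeta|,|z|\}\ge\sin(\mu-\theta)\,|z|$ everywhere on $\gamma(1;\theta)$. Feeding this into the representation,
$$|E_{\alpha,\beta}(z)|\le\frac{1}{2\pi\alpha\sin(\mu-\theta)\,|z|}\int_{\gamma(1;\theta)}e^{\mathrm{Re}\,\zeta^{1/\alpha}}\,|\zeta|^{(1-\beta)/\alpha}\,|d\zeta|,$$
and on each ray $\mathrm{Re}\,\zeta^{1/\alpha}=r^{1/\alpha}\cos(\theta/\alpha)$ with $r=|\zeta|\ge 1$ and $\cos(\theta/\alpha)<0$, so $\int_{1}^{\infty}e^{r^{1/\alpha}\cos(\theta/\alpha)}r^{(1-\beta)/\alpha}\,dr<\infty$ regardless of the sign of $(1-\beta)/\alpha$ (super‑exponential decay beats any power of $r$), while the arc contributes a finite amount since the integrand is continuous on a compact set. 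Hence the integral is a finite constant depending only on $\alpha,\beta,\theta$, which gives $|E_{\alpha,\beta}(z)|\le C_1|z|^{-1}$ as required, and combining with the near-origin bound proves the lemma.

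The hard part will be the contour representation and its deformation onto $\gamma(1;\theta)$ in the sector $\theta<|\arg z|\le\pi$ — justifying that the integral actually reproduces $E_{\alpha,\beta}(z)$ (via term-by-term integration and Hankel's loop formula for $1/\Gamma$, then analytic continuation in $z$) and that no residue contribution appears there — together with the bookkeeping that keeps the $\zeta$-integral convergent; the latter is exactly why the hypotheses $\pi\alpha/2<\mu$ and $\mu<\pi\alpha$ cannot be dropped, since they are precisely what permit a choice of $\theta$ with $\theta/\alpha\in(\pi/2,\pi)$, hence $\cos(\theta/\alpha)<0$.
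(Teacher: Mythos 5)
Your argument is correct: the paper itself gives no proof of this lemma, citing it directly as Theorem 1.6 of Podlubny \cite{P99}, and your sketch is essentially the standard proof from that reference (boundedness near the origin from the entirety of $E_{\alpha,\beta}$, plus the Hankel-type contour representation over $\gamma(1;\theta)$ with $\pi\alpha/2<\theta<\mu$, no residue term since $|\arg z|>\theta$, the lower bound $|\zeta-z|\geq \sin(\mu-\theta)|z|$, and convergence of the $\zeta$-integral because $\cos(\theta/\alpha)<0$). The only part left schematic, as you note, is the derivation and deformation of the integral representation itself, which is exactly the content of the lemmas preceding Theorem 1.6 in \cite{P99}.
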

Note that in \cite{Sim14} the following estimate for the Mittag-Leffler function is proved, when $0<\alpha<1$ (not true for $\alpha\geq 1$)
\begin{equation*}
\frac{1}{1+\Gamma(1-\alpha)z}\leq E_{\alpha,1}(-z)\leq\frac{1}{1+\Gamma(1+\alpha)^{-1}z},\;z>0,
\end{equation*}
Thus, it follows that
\begin{equation}\label{EST: Mittag 0}
0<E_{\alpha,1}(-z)<1,\;z>0.
\end{equation}
\begin{lemma}\label{L:Mon E}
If $0<\alpha\leq 1$ and $\beta\geq \alpha.$ Then the generalized Mittag-Leffler
function $E_{\alpha,\beta}(-z),\;z\geq 0,$ is completely monotonic, that is,
$$(-1)^n\frac{d^n}{dx^n}E_{\alpha,\beta}(-z)\geq 0,\;\text{for}\; z\geq 0\; \text{and}\; n=0,1,2,\dots$$
\end{lemma}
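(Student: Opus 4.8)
The statement asserts that $z\mapsto E_{\alpha,\beta}(-z)$ is completely monotone on $[0,\infty)$; this is a classical fact (Pollard for $\beta=1$, Schneider in general), and the plan is to recover it by differentiating the series and reducing to the nonnegativity of three-parameter Mittag-Leffler functions. Recall the Prabhakar function $E_{\alpha,\mu}^{\gamma}(z)=\sum_{k=0}^{\infty}\frac{(\gamma)_k}{k!}\frac{z^k}{\Gamma(k\alpha+\mu)}$, where $(\gamma)_k=\gamma(\gamma+1)\cdots(\gamma+k-1)$, so that $E_{\alpha,\beta}=E_{\alpha,\beta}^{1}$. Differentiating term by term gives $\frac{d}{dz}E_{\alpha,\mu}^{\gamma}(z)=\gamma\,E_{\alpha,\mu+\alpha}^{\gamma+1}(z)$ and hence, by induction,
$$\frac{d^n}{dz^n}E_{\alpha,\beta}(z)=n!\,E_{\alpha,\beta+n\alpha}^{n+1}(z),\qquad n=0,1,2,\dots ,$$
so that $(-1)^n\frac{d^n}{dz^n}E_{\alpha,\beta}(-z)=n!\,E_{\alpha,\beta+n\alpha}^{n+1}(-z)$. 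Thus the lemma is equivalent to the statement that $E_{\alpha,\mu}^{\gamma}(-x)\ge 0$ for all $x\ge 0$ whenever $\gamma=n+1$ and $\mu=\beta+n\alpha$, $n\ge 0$.

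For the nonnegativity I would use the Laplace identity $\int_0^\infty e^{-st}\,t^{\mu-1}E_{\alpha,\mu}^{\gamma}(-t^\alpha)\,dt=\frac{s^{\alpha\gamma-\mu}}{(s^{\alpha}+1)^{\gamma}}$. For $0<\alpha\le 1$, $\gamma>0$ and $\mu\ge\alpha\gamma$, the right-hand side is a product of two completely monotone functions of $s$: the power $s^{\alpha\gamma-\mu}=s^{-(\mu-\alpha\gamma)}$, which is completely monotone since $\mu-\alpha\gamma\ge 0$, and $(1+s^{\alpha})^{-\gamma}$, which is completely monotone as the composition of the completely monotone function $w\mapsto(1+w)^{-\gamma}$ with the Bernstein function $s\mapsto s^{\alpha}$. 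Hence $\frac{s^{\alpha\gamma-\mu}}{(s^{\alpha}+1)^{\gamma}}$ is itself completely monotone, so by Bernstein's theorem and the uniqueness of the Laplace transform its preimage $t^{\mu-1}E_{\alpha,\mu}^{\gamma}(-t^\alpha)$ is nonnegative, i.e. $E_{\alpha,\mu}^{\gamma}(-x)\ge 0$ for all $x\ge 0$.

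It remains only to apply this with $\gamma=n+1$ and $\mu=\beta+n\alpha$: the hypothesis $\mu\ge\alpha\gamma$ then reads $\beta+n\alpha\ge\alpha(n+1)$, i.e. $\beta\ge\alpha$, which holds for every $n$ simultaneously under the assumptions of the lemma (and $0<\alpha\le1$ is exactly what makes $s\mapsto s^{\alpha}$ Bernstein, so the case $\alpha=1$ needs no separate treatment). The only real work, then, is bookkeeping: verifying the differentiation identity and having the Prabhakar Laplace transform and the elementary complete-monotonicity calculus (products of completely monotone functions, and composition of a completely monotone function with a Bernstein function) at one's disposal; alternatively, one can bypass the three-parameter functions altogether and simply quote the classical theorem of Schneider on completely monotone generalized Mittag-Leffler functions (together with Pollard's theorem for $\beta=1$).
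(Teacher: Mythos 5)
Your argument is correct, but it is a genuinely different route from the paper: the paper does not prove Lemma \ref{L:Mon E} at all, it simply cites the classical results of Pollard \cite{P48} (case $\beta=1$) and Schneider, Miller--Samko \cite{Sch96, MS97, MS01} for the general case -- which is precisely the ``alternative'' you mention in your last sentence. Your proposal instead gives a self-contained derivation: the differentiation identity $\frac{d^n}{dz^n}E_{\alpha,\beta}(z)=n!\,E^{n+1}_{\alpha,\beta+n\alpha}(z)$ (which correctly restates complete monotonicity of $E_{\alpha,\beta}(-z)$ as nonnegativity of the Prabhakar functions $E^{n+1}_{\alpha,\beta+n\alpha}(-x)$), the Laplace identity $\int_0^\infty e^{-st}t^{\mu-1}E^{\gamma}_{\alpha,\mu}(-t^\alpha)\,dt=s^{\alpha\gamma-\mu}(1+s^\alpha)^{-\gamma}$, and the standard complete-monotonicity calculus (products of completely monotone functions are completely monotone; a completely monotone function composed with a Bernstein function is completely monotone) together with Bernstein's theorem and Laplace-transform uniqueness. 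The checks all go through: the hypothesis $\beta\geq\alpha$ enters exactly as $\mu-\alpha\gamma=\beta-\alpha\geq 0$, and $0<\alpha\leq 1$ is exactly what makes $s\mapsto s^\alpha$ Bernstein; moreover $\mu=\beta+n\alpha\geq\beta>0$ guarantees local integrability of $t^{\mu-1}$. The only details worth recording explicitly are the justification of termwise integration in the Laplace identity (absolute convergence for $s>1$ suffices, after which uniqueness of Laplace transforms of measures agreeing on a half-line identifies the Bernstein measure with the density $t^{\mu-1}E^{\gamma}_{\alpha,\mu}(-t^\alpha)$, giving nonnegativity a.e.\ and hence everywhere by continuity). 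What each approach buys: the paper's citation is the shortest path and is standard practice for this classical lemma; your argument is longer but transparent, shows precisely where each hypothesis is used, and in fact yields the stronger nonnegativity statement for the whole family of three-parameter functions $E^{\gamma}_{\alpha,\mu}(-x)$ with $\mu\geq\alpha\gamma$, which could be of independent use.
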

\begin{proof}
The proof for this result can be found in the references \cite{P48, Sch96, MS97, MS01}.
\end{proof}
\begin{lemma}\label{L:E1E2}
For $0<\alpha\leq 1$ and $\beta\geq \alpha,$ the Mittag-Leffler type function $E_{\alpha,\beta}(-\mu_\xi t^\alpha)$ satisfies
$$0\leq E_{\alpha,\beta}(-\mu_\xi t^\alpha)\leq E_{\alpha,\beta}(-\inf_{\xi\in\mathcal{I}}\mu_\xi t^\alpha)\leq \frac{1}{\Gamma(\beta)},\; t\geq 0.$$
\end{lemma}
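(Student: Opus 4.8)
The plan is to chain together three inequalities, each of which follows from a property already available. The first inequality, $0 \leq E_{\alpha,\beta}(-\mu_\xi t^\alpha)$, is immediate from Lemma \ref{L:Mon E}: since $0<\alpha\leq 1$ and $\beta\geq\alpha$, the function $z\mapsto E_{\alpha,\beta}(-z)$ is completely monotonic on $[0,\infty)$, and the case $n=0$ of that statement says precisely $E_{\alpha,\beta}(-z)\geq 0$ for $z\geq 0$; now substitute $z=\mu_\xi t^\alpha\geq 0$, using $\mu_\xi>0$ and $t\geq 0$.

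For the second inequality, $E_{\alpha,\beta}(-\mu_\xi t^\alpha)\leq E_{\alpha,\beta}(-\inf_{\xi\in\mathcal{I}}\mu_\xi\, t^\alpha)$, I would again invoke Lemma \ref{L:Mon E}, this time through the $n=1$ case: $-\frac{d}{dz}E_{\alpha,\beta}(-z)\geq 0$, so $z\mapsto E_{\alpha,\beta}(-z)$ is nonincreasing on $[0,\infty)$. Since $\inf_{\xi\in\mathcal{I}}\mu_\xi\leq \mu_\xi$ for every $\xi$, and $t^\alpha\geq 0$, we get $\inf_{\xi\in\mathcal{I}}\mu_\xi\, t^\alpha\leq \mu_\xi t^\alpha$, and monotonicity reverses the inequality under $E_{\alpha,\beta}(-\cdot)$. (Implicitly one uses $\inf_{\xi\in\mathcal{I}}\mu_\xi>0$, which is the standing hypothesis on $\mathcal{M}$ in the inverse-problem setting, so that the argument is a genuine nonnegative number; in fact nonnegativity of the infimum is all that is needed here.)

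For the third inequality, $E_{\alpha,\beta}(-\inf_{\xi\in\mathcal{I}}\mu_\xi\, t^\alpha)\leq \frac{1}{\Gamma(\beta)}$, I would use the same monotonicity once more: the maximum of the nonincreasing function $z\mapsto E_{\alpha,\beta}(-z)$ on $[0,\infty)$ is attained at $z=0$, where $E_{\alpha,\beta}(0)=\frac{1}{\Gamma(\beta)}$ directly from the defining series $E_{\alpha,\beta}(z)=\sum_{k\geq 0} z^k/\Gamma(k\alpha+\beta)$ (only the $k=0$ term survives). Since $\inf_{\xi\in\mathcal{I}}\mu_\xi\, t^\alpha\geq 0$, the value there is at most $\frac{1}{\Gamma(\beta)}$.

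None of the three steps presents a real obstacle; the only point requiring a moment of care is making sure the hypotheses of Lemma \ref{L:Mon E} ($0<\alpha\leq 1$ and $\beta\geq\alpha$) are exactly the ones assumed here, and that the argument $\mu_\xi t^\alpha$ (respectively $\inf_\xi \mu_\xi\, t^\alpha$) lies in the domain $[0,\infty)$ of complete monotonicity, which it does for all $t\geq 0$. Everything then assembles into the displayed chain.
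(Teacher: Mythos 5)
Your proposal is correct and follows essentially the same route as the paper: both arguments apply Lemma \ref{L:Mon E} with $n=0$ to get nonnegativity and $n=1$ to get that $z\mapsto E_{\alpha,\beta}(-z)$ is nonincreasing on $[0,\infty)$, and then compare the arguments $\mu_\xi t^\alpha \geq \inf_{\xi\in\mathcal{I}}\mu_\xi\, t^\alpha \geq 0$ and use $E_{\alpha,\beta}(0)=\frac{1}{\Gamma(\beta)}$. No gaps.
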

\begin{proof}
    In view of Lemma \ref{L:Mon E}, we have
    $$E_{\alpha,\beta}(-z)\geq 0,\;\forall z\geq 0,$$
and
\begin{equation*}
    \begin{split}
       \frac{d}{dz}E_{\alpha,\beta}(-z)\leq 0,\;\forall z\geq 0.
    \end{split}
\end{equation*}
These estimates mean that the function $E_{\alpha,\beta}(-z)$ is positive and non-increasing on the interval $[0,\infty)$, i.e. it satisfies the following inequality for all $x,\,y$ such that $x\geq y\geq 0$
$$
       0\leq E_{\alpha,\beta}(-x)\leq E_{\alpha,\beta}(-y)\leq E_{\alpha,\beta}(0)=\frac{1}{\Gamma(\beta)}.
$$
These yield the desired results and complete the proof. 
\end{proof}
We now present a particular case of the lemma from \cite[Lemma 3.2]{SY11}. The proof mainly follows the arguments of the proof in \cite[Lemma 3.2]{SY11}, hence, we omit it. \begin{lemma}\label{L:Der E}
If $\mu>0$ and $\alpha>0$, then
$$\frac{d}{dt}E_{\alpha,1}(-\mu t^\alpha)=-\mu t^{\alpha-1}E_{\alpha,\alpha}(-\mu t^\alpha), \forall{t}\in (0,\infty).$$
\end{lemma}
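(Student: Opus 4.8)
The plan is to differentiate the defining power series term by term and then reindex. Writing out the definition,
$$E_{\alpha,1}(-\mu t^\alpha)=\sum_{k=0}^\infty\frac{(-\mu)^k t^{k\alpha}}{\Gamma(k\alpha+1)},$$
a series which, since $\Gamma(k\alpha+1)$ grows faster than any geometric factor, has infinite radius of convergence in the variable $z=-\mu t^\alpha$. Consequently, on every compact subinterval $[a,b]\subset(0,\infty)$ the series above together with the formally differentiated series
$$\sum_{k=1}^\infty\frac{(-\mu)^k\,k\alpha\,t^{k\alpha-1}}{\Gamma(k\alpha+1)}$$
converge uniformly: each term is a continuous function of $t$ on $[a,b]$, with $t^{k\alpha-1}$ well defined and smooth there because $t>0$, and the tail is dominated by a convergent numerical series. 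Hence term-by-term differentiation is legitimate and $\frac{d}{dt}E_{\alpha,1}(-\mu t^\alpha)$ equals the second series on $(0,\infty)$.

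Next I would use the functional equation $\Gamma(k\alpha+1)=k\alpha\,\Gamma(k\alpha)$, valid for $k\geq 1$, to cancel the factor $k\alpha$ in the numerator, obtaining
$$\frac{d}{dt}E_{\alpha,1}(-\mu t^\alpha)=\sum_{k=1}^\infty\frac{(-\mu)^k t^{k\alpha-1}}{\Gamma(k\alpha)}.$$
Shifting the summation index by setting $k=j+1$ turns this into
$$\sum_{j=0}^\infty\frac{(-\mu)^{j+1} t^{(j+1)\alpha-1}}{\Gamma(j\alpha+\alpha)}=-\mu\,t^{\alpha-1}\sum_{j=0}^\infty\frac{(-\mu t^\alpha)^{j}}{\Gamma(j\alpha+\alpha)}=-\mu\,t^{\alpha-1}E_{\alpha,\alpha}(-\mu t^\alpha),$$
which is the claimed identity.

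The only genuine point requiring care — and the main obstacle, such as it is — is the justification of the interchange of $\frac{d}{dt}$ with the infinite sum; everything else is bookkeeping with the Gamma function. This is also why the restriction $t\in(0,\infty)$ is essential: at $t=0$ the individual terms $t^{k\alpha-1}$ need not be differentiable, or even bounded, when $0<\alpha<1$, so the uniform-convergence argument must be run on compact subintervals bounded away from the origin. Since differentiability is a local property, establishing the identity on each such subinterval yields it on all of $(0,\infty)$.
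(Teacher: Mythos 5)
Your proof is correct: term-by-term differentiation of the defining series, the identity $\Gamma(k\alpha+1)=k\alpha\,\Gamma(k\alpha)$, and a reindexing give exactly the claimed formula, and your uniform-convergence justification on compact subintervals of $(0,\infty)$ is the right way to legitimize the interchange. The paper omits the proof and refers to \cite[Lemma 3.2]{SY11}, whose argument is this same termwise differentiation, so your route coincides with the one the paper relies on.
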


\subsection{Fractional calculus} In this subsection, we compile several results from fractional calculus.
\begin{lemma}\cite[Lemma 2.5]{Zh17}\label{L:max+min}
Let $\alpha\in(0,1)$ and $v=v(t)\in C[0,T]$ with $\mathcal{D}_t^\alpha v\in C[0,T].$ 
Then the following statements hold:

$(a)$ If $t_0\in (0,T]$ and $v(t_0)=\max_{t\in[0,T]}v(t),$ then we have $\mathcal{D}_t^\alpha v\geq 0.$ 

$(b)$ If $t_0\in (0,T]$ and $v(t_0)=\min_{t\in[0,T]}v(t),$ then we have $\mathcal{D}_t^\alpha v\leq 0.$
\end{lemma}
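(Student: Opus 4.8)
The final statement is Lemma~\ref{L:max+min}, the extremum principle for the Caputo derivative: if $v\in C[0,T]$ with $\mathcal{D}_t^\alpha v\in C[0,T]$ attains its maximum over $[0,T]$ at an interior-or-right-endpoint point $t_0\in(0,T]$, then $\mathcal{D}_t^\alpha v(t_0)\ge 0$, and symmetrically for a minimum. Since this is quoted from \cite[Lemma 2.5]{Zh17}, the expected "proof" is a citation; but let me sketch the genuine argument.

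\medskip

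\emph{Plan.} It suffices to prove (a), since (b) follows by applying (a) to $-v$. The plan is to write out the Caputo derivative at $t_0$, integrate by parts to move the $\tfrac{d}{d\tau}$ off $v$, and then exploit the sign of $v(t_0)-v(\tau)$. First I would introduce the auxiliary function $w(\tau)=v(t_0)-v(\tau)\ge 0$ for all $\tau\in[0,T]$ (by the maximality of $v(t_0)$), with $w(t_0)=0$ and $w'=-v'$. Then
$$
\mathcal{D}_t^\alpha v(t_0)=\frac{1}{\Gamma(1-\alpha)}\int_0^{t_0}(t_0-\tau)^{-\alpha}v'(\tau)\,d\tau
=-\frac{1}{\Gamma(1-\alpha)}\int_0^{t_0}(t_0-\tau)^{-\alpha}w'(\tau)\,d\tau.
$$
The key step is an integration by parts on $\int_0^{t_0}(t_0-\tau)^{-\alpha}w'(\tau)\,d\tau$. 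Formally,
$$
\int_0^{t_0}(t_0-\tau)^{-\alpha}w'(\tau)\,d\tau
=\Big[(t_0-\tau)^{-\alpha}w(\tau)\Big]_0^{t_0}-\alpha\int_0^{t_0}(t_0-\tau)^{-\alpha-1}w(\tau)\,d\tau,
$$
and since $w(t_0)=0$ (so the boundary term at $\tau=t_0$ vanishes — this needs a short regularity argument, see below) and $w(0)\ge 0$, one gets
$$
\mathcal{D}_t^\alpha v(t_0)=\frac{1}{\Gamma(1-\alpha)}\left(t_0^{-\alpha}w(0)+\alpha\int_0^{t_0}(t_0-\tau)^{-\alpha-1}w(\tau)\,d\tau\right)\ge 0,
$$
because $w(0)\ge 0$, the kernel $(t_0-\tau)^{-\alpha-1}>0$, and $w\ge 0$ throughout. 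This is the heart of the argument and it makes the positivity transparent.

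\medskip

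\emph{Main obstacle.} The delicate point is justifying the integration by parts and the vanishing of the boundary term at $\tau=t_0$, since $w'=-v'$ need not be integrable near $\tau=t_0$ against the singular kernel $(t_0-\tau)^{-\alpha-1}$ a priori — indeed the resulting integral $\int_0^{t_0}(t_0-\tau)^{-\alpha-1}w(\tau)\,d\tau$ is an improper integral whose convergence rests on $w(\tau)=v(t_0)-v(\tau)\to 0$ as $\tau\to t_0^-$, which holds by continuity of $v$ but only gives $o(1)$, not the $o((t_0-\tau)^{\alpha})$ decay one would want for absolute convergence. The standard fix is to carry out the computation on $[0,t_0-\varepsilon]$, where everything is classical, obtaining
$$
\int_0^{t_0-\varepsilon}(t_0-\tau)^{-\alpha}w'(\tau)\,d\tau
=\varepsilon^{-\alpha}w(t_0-\varepsilon)-t_0^{-\alpha}w(0)-\alpha\int_0^{t_0-\varepsilon}(t_0-\tau)^{-\alpha-1}w(\tau)\,d\tau,
$$
and then let $\varepsilon\to0^+$. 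For the boundary term $\varepsilon^{-\alpha}w(t_0-\varepsilon)$ one uses that $\mathcal{D}_t^\alpha v\in C[0,T]$ forces enough regularity on $v$ at $t_0$ for $w(t_0-\varepsilon)=o(\varepsilon^\alpha)$ (this is where the hypothesis $\mathcal{D}_t^\alpha v\in C[0,T]$, rather than merely $v\in C^1$, is genuinely used — it is exactly the content of the corresponding lemma in \cite{Zh17, L09}); the term then vanishes in the limit. Once that is in hand, the left side converges to $\Gamma(1-\alpha)\,\mathcal{D}_t^\alpha v(t_0)$, the right side has all nonnegative surviving pieces after the sign flip, and the monotone convergence of $\int_0^{t_0-\varepsilon}(t_0-\tau)^{-\alpha-1}w(\tau)\,d\tau$ (integrand nonnegative) guarantees the limit exists in $[0,\infty]$, forcing finiteness and the claimed inequality. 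Part (b) is immediate by replacing $v$ with $-v$. Since this is precisely \cite[Lemma 2.5]{Zh17}, in the paper itself I would simply cite that reference rather than reproduce the argument.
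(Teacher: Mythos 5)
The paper offers no proof of this lemma at all---it is imported verbatim from \cite[Lemma 2.5]{Zh17}---and your proposal correctly identifies that a citation is the expected ``proof,'' which is exactly what the paper does. Your supplementary sketch is the standard Luchko-type extremum argument behind that citation, and it is sound: you rightly isolate the only delicate step, namely that the boundary term $\varepsilon^{-\alpha}\bigl(v(t_0)-v(t_0-\varepsilon)\bigr)$ vanishes because $\mathcal{D}_t^\alpha v\in C[0,T]$ (via $v-v(0)=I_t^\alpha \mathcal{D}_t^\alpha v$) gives the needed $o(\varepsilon^\alpha)$ decay, and you appropriately defer that regularity point to \cite{Zh17} and \cite{L09}.
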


\begin{lemma}\label{I0D}\cite[Lemma 2.6]{Zh17}
The Riemann-Liouville integral $I^\alpha_t$ of order $\alpha>0$ is defined as
$$I_t^\alpha v=\frac{1}{\Gamma(\alpha)}\int_0^t (t-\tau)^{\alpha-1}v(\tau)d\tau.$$
For $0<\alpha<1,$ $v(t)$ and $\mathcal{D}_t^\alpha v(t)$ are both continuous on the interval $[0,T],$ then we have
$$(\mathcal{D}_t^\alpha \circ I_t^\alpha v)(t)=v(t),\;(I_t^\alpha\circ \mathcal{D}_t^\alpha v)(t)=v(t)-v(0),\;t\in [0,T].$$
\end{lemma}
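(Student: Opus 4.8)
The plan is to verify both composition identities directly from the integral definitions of $I_t^\alpha$ and $\mathcal{D}_t^\alpha$, reducing everything to a single tool: Fubini's theorem on the simplex $\{0\le s\le\tau\le t\}$ combined with the Beta-integral evaluation
\[
\int_s^t (t-\tau)^{a-1}(\tau-s)^{b-1}\,d\tau = B(a,b)\,(t-s)^{a+b-1},\qquad a,b>0,
\]
together with $B(\alpha,1-\alpha)=\Gamma(\alpha)\Gamma(1-\alpha)/\Gamma(1)=\Gamma(\alpha)\Gamma(1-\alpha)$. Equivalently, one could package these two facts as the semigroup law $I_t^\alpha\circ I_t^\beta=I_t^{\alpha+\beta}$ and the factorization $\mathcal{D}_t^\alpha = I_t^{1-\alpha}\circ\frac{d}{dt}$, but it is just as short to argue by hand.

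For the identity $(I_t^\alpha\circ\mathcal{D}_t^\alpha v)(t)=v(t)-v(0)$, I would substitute the definition of $\mathcal{D}_\tau^\alpha v$ into $I_t^\alpha$ and obtain the double integral $\frac{1}{\Gamma(\alpha)\Gamma(1-\alpha)}\int_0^t\!\int_0^\tau (t-\tau)^{\alpha-1}(\tau-s)^{-\alpha}v'(s)\,ds\,d\tau$. Since $\alpha-1>-1$ and $-\alpha>-1$, the kernel is integrable over the simplex — the $\tau$-integral is precisely the Beta integral above with $(a,b)=(\alpha,1-\alpha)$ — so Fubini applies. Swapping the order of integration collapses the inner integral to $B(\alpha,1-\alpha)=\Gamma(\alpha)\Gamma(1-\alpha)$, the constant prefactor cancels, and there remains $\int_0^t v'(s)\,ds=v(t)-v(0)$ by the fundamental theorem of calculus.

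For $(\mathcal{D}_t^\alpha\circ I_t^\alpha v)(t)=v(t)$, set $w=I_t^\alpha v$; the change of variables $s=t-\tau$ and differentiation under the integral sign give $w\in C^1(0,T]$ with $w'(t)=\frac{1}{\Gamma(\alpha)}\bigl(t^{\alpha-1}v(0)+\int_0^t (t-\tau)^{\alpha-1}v'(\tau)\,d\tau\bigr)$. Inserting this into $\mathcal{D}_t^\alpha w=\frac{1}{\Gamma(1-\alpha)}\int_0^t (t-\tau)^{-\alpha}w'(\tau)\,d\tau$ splits it into two terms: the first, $\frac{v(0)}{\Gamma(1-\alpha)\Gamma(\alpha)}\int_0^t (t-\tau)^{-\alpha}\tau^{\alpha-1}\,d\tau$, equals $v(0)$ by the Beta evaluation with $(a,b)=(1-\alpha,\alpha)$; the second, after another Fubini swap and the same Beta computation, equals $\int_0^t v'(s)\,ds=v(t)-v(0)$. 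Their sum is $v(t)$.

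The only genuinely delicate point — and the step I expect to be the main obstacle — is the regularity bookkeeping: justifying differentiation under the integral sign for $w=I_t^\alpha v$ and the two applications of Fubini when $v$ is only assumed, say, absolutely continuous (so that $v'\in L^1$ and $\mathcal{D}_t^\alpha v$ makes sense), since every kernel involved is singular at an endpoint. This is handled by the integrability estimates above — each iterated kernel $(t-\tau)^{a-1}(\tau-s)^{b-1}$ with $a,b\in(0,1)$ is absolutely integrable on the triangle, and $w'$ is continuous on $(0,T]$ with an at most $\tau^{\alpha-1}$ singularity at $0$, which is integrable against $(t-\tau)^{-\alpha}$ since $\alpha<1$. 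Alternatively, one first proves the identities for $v\in C^\infty[0,T]$, where all manipulations are trivially licit, and then passes to the limit using the continuity of $I_t^\alpha$ and $\mathcal{D}_t^\alpha$ on $C[0,T]$; see \cite[Lemma 2.6]{Zh17} for the full details.
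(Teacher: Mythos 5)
Your proposal is correct, and it is worth noting that the paper itself offers no proof at all for this lemma: it is quoted verbatim from \cite[Lemma 2.6]{Zh17}, so your argument is compared against the standard literature proof rather than anything in the text. Your route is exactly that standard argument, carried out by hand: both identities reduce to Fubini on the triangle $\{0\le s\le\tau\le t\}$ plus the Beta evaluation $\int_s^t(t-\tau)^{a-1}(\tau-s)^{b-1}d\tau=B(a,b)(t-s)^{a+b-1}$ with $(a,b)=(\alpha,1-\alpha)$, and the two computations you display (the cancellation of $\Gamma(\alpha)\Gamma(1-\alpha)$ giving $\int_0^t v'(s)\,ds$, and the splitting of $\mathcal{D}_t^\alpha(I_t^\alpha v)$ into the $v(0)$ term and the $v(t)-v(0)$ term) are correct. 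The only point requiring care is the one you already flag: the formula $w'(t)=\frac{1}{\Gamma(\alpha)}\bigl(t^{\alpha-1}v(0)+\int_0^t(t-\tau)^{\alpha-1}v'(\tau)\,d\tau\bigr)$ and the Fubini swaps use $v'\in L^1(0,T)$, which is not literally among the stated hypotheses ($v$, $\mathcal{D}_t^\alpha v\in C[0,T]$) but is implicit in the paper's definition of the Caputo derivative, which is written directly in terms of $\frac{d}{d\tau}v(\tau)$; under that reading (e.g.\ $v$ absolutely continuous) your integrability estimates for the singular kernels $(t-\tau)^{a-1}(\tau-s)^{b-1}$ with $a,b\in(0,1)$ do justify every interchange, and your fallback of first proving the identities for smooth $v$ and passing to the limit is a legitimate alternative. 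In short: a complete, self-contained proof of a fact the paper merely cites, with the delicate regularity step correctly identified and handled.
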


\subsection{Generalized mean value theorem} In this part, we present the generalized mean value theorem. 
\begin{thm}\label{Cor}\cite[Theorem 1]{OSh07} Let $0<\alpha\leq 1,$ $a<b$ and $g\in C[a,b]$ be such that ${}_a\mathcal{D}^\alpha_t g\in C[a,b].$ Then, there exists some $t\in (a,b)$ such that
$$
g(b)-g(a)=\frac{1}{\Gamma(\alpha+1)}{}_a\mathcal{D}^\alpha_t g(t)\cdot (b-a)^\alpha.
$$
\end{thm}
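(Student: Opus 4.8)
The plan is to deduce the identity from the fractional fundamental theorem of calculus (Lemma \ref{I0D}) combined with the classical mean value theorem for integrals against a nonnegative weight; when $\alpha=1$ this reduces to the usual proof of the mean value theorem, and the fractional case follows the same template. First I would reduce to the base point $a=0$ by the translation $\tilde g(s):=g(a+s)$ for $s\in[0,b-a]$: a change of variables in the defining integral shows ${}_0\mathcal{D}_s^\alpha\tilde g(s)=\big({}_a\mathcal{D}^\alpha_t g\big)(a+s)$, which is continuous on $[0,b-a]$, so Lemma \ref{I0D} applies to $\tilde g$. Evaluating the identity $(I_t^\alpha\circ\mathcal{D}_t^\alpha v)(t)=v(t)-v(0)$ at $v=\tilde g$ and $t=b-a$, and then substituting back $\tau=a+s$, yields the representation
\begin{equation}\label{EQ:mvt-repr}
g(b)-g(a)=\frac{1}{\Gamma(\alpha)}\int_a^b (b-\tau)^{\alpha-1}\,{}_a\mathcal{D}^\alpha_\tau g(\tau)\,d\tau .
\end{equation}

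Next I would exploit the sign and integrability of the kernel. The function $\phi(\tau):={}_a\mathcal{D}^\alpha_\tau g(\tau)$ is continuous on the compact interval $[a,b]$, hence attains a minimum $m$ and a maximum $M$; the weight $(b-\tau)^{\alpha-1}$ is nonnegative on $[a,b)$ and integrable since $\alpha-1>-1$, with $\int_a^b (b-\tau)^{\alpha-1}\,d\tau=(b-a)^\alpha/\alpha$. Sandwiching the integrand in \eqref{EQ:mvt-repr} between $m(b-\tau)^{\alpha-1}$ and $M(b-\tau)^{\alpha-1}$, integrating, and using $\alpha\,\Gamma(\alpha)=\Gamma(\alpha+1)$, I obtain
\begin{equation*}
m\leq \frac{\Gamma(\alpha+1)\big(g(b)-g(a)\big)}{(b-a)^\alpha}\leq M .
\end{equation*}
By the intermediate value theorem there is then a point $t\in[a,b]$ with $\phi(t)=\Gamma(\alpha+1)(g(b)-g(a))/(b-a)^\alpha$, which is precisely the asserted equality.

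The only delicate point — and the one I would treat carefully — is producing the point in the \emph{open} interval $(a,b)$ rather than merely in $[a,b]$. If $\phi$ is constant, every interior point works. Otherwise $m<M$, and I would observe that equality on either side of the last display forces $\int_a^b (b-\tau)^{\alpha-1}\big(\phi(\tau)-m\big)\,d\tau=0$ (respectively with $M-\phi$); since the kernel is strictly positive on $[a,b)$ and $\phi-m$ is continuous and nonnegative, this would imply $\phi\equiv m$ (respectively $\phi\equiv M$) on $[a,b]$, contradicting $m<M$. Hence the mean value lies strictly between $m$ and $M$, so a value of $\phi$ equal to it is attained strictly between a minimizer and a maximizer of $\phi$, and such a point necessarily lies in $(a,b)$. (Alternatively, one may simply quote \cite{OSh07}.) I do not anticipate any further obstruction, since every ingredient is either elementary or already recorded in Lemma \ref{I0D}.
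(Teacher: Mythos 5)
Your proof is correct, and since the paper itself gives no argument for Theorem \ref{Cor} (it only cites \cite[Theorem 1]{OSh07}), your route via Lemma \ref{I0D} — the fractional fundamental-theorem identity giving the representation of $g(b)-g(a)$ as a weighted integral of ${}_a\mathcal{D}^\alpha_\tau g$, followed by sandwiching and the intermediate value theorem — is essentially the standard argument of the cited source, reconstructed self-containedly from the paper's own toolkit. Your extra care in forcing the mean-value point into the open interval $(a,b)$ (ruling out the boundary cases via strict positivity of the kernel $(b-\tau)^{\alpha-1}$ on $[a,b)$) is a detail usually glossed over and is handled correctly.
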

Here ${}_a\mathcal{D}^\alpha_t $ is the Caputo derivative defined by
$${}_a\mathcal{D}^\alpha_t g=\frac{1}{\Gamma(1-\alpha)}\int_a^t (t-\tau)^{-\alpha}\frac{d}{d\tau} g(\tau)d\tau .$$
Theorem \ref{Cor} plays a crucial role in establishing the existence of a solution to our inverse problems.

\subsection{Gronwall type lemma}
We present, an inequality of Gronwall type with weakly singular kernel $(t-\tau)^{\alpha-1}$ (see \cite[Lemma 7.1.1]{H81}).
\begin{lemma}\label{L:Gronwall}
    Suppose $c\geq 0,\,0<\alpha<1,$ and $z(t)$ is a non-negative function locally integrable on $[0,b)$ (for some $b\leq\infty$), and suppose $y(t)$ is non-negative and locally integrable on $[0,b)$ with
    $$y(t)\leq z(t)+c\int_0^t(t-\tau)^{\alpha-1}y(\tau)d\tau,\;\forall t\in[0,b).$$
    Then 
    $$y(t)\leq z(t)+c\Gamma(\alpha)\int_0^t \frac{d}{d\tau}E_{\alpha,1}(c\Gamma(\alpha)(t-\tau)^\alpha)z(\tau)d\tau, \;\forall t\in[0,b).$$
    If $z(t)\equiv z$ is constant, then $$y(t)\leq zE_{\alpha,1}(c\Gamma(\alpha)t^\alpha),\;\forall t\in[0,b).$$
\end{lemma}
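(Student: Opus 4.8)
The plan is to reduce the statement to an explicit evaluation of the Neumann series generated by iterating the integral inequality; this is the classical argument of \cite[Lemma 7.1.1]{H81}, which I would organize in four steps. The starting point is the positivity-preserving linear operator
$$(B\phi)(t):=c\int_0^t(t-\tau)^{\alpha-1}\phi(\tau)\,d\tau,$$
acting on non-negative locally integrable functions on $[0,b)$, so that the hypothesis reads $y\leq z+By$. Since $B$ is monotone and all functions involved are non-negative, applying $B$ to this inequality and substituting repeatedly gives, for every $n\geq1$,
$$y(t)\leq\sum_{k=0}^{n-1}(B^kz)(t)+(B^ny)(t),\qquad t\in[0,b).$$

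The heart of the proof is the second step: showing by induction on $n$ that
$$(B^n\phi)(t)=\frac{(c\Gamma(\alpha))^n}{\Gamma(n\alpha)}\int_0^t(t-\tau)^{n\alpha-1}\phi(\tau)\,d\tau.$$
The base case $n=1$ is the definition of $B$, and the inductive step uses Fubini's theorem (legitimate precisely because the integrands are non-negative) to interchange the order of integration in $B(B^n\phi)$, followed by the Beta-function identity $\int_\tau^t(t-s)^{\alpha-1}(s-\tau)^{n\alpha-1}\,ds=\frac{\Gamma(\alpha)\Gamma(n\alpha)}{\Gamma((n+1)\alpha)}(t-\tau)^{(n+1)\alpha-1}$, which reproduces exactly the kernel of index $n+1$.

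In the third step I dispose of the remainder and sum the series. Fixing $T<b$ and taking $n$ large enough that $n\alpha\geq1$, one has $(t-\tau)^{n\alpha-1}\leq T^{n\alpha-1}$ on $[0,T]$, hence
$$0\leq(B^ny)(t)\leq\frac{(c\Gamma(\alpha))^nT^{n\alpha-1}}{\Gamma(n\alpha)}\int_0^Ty(\tau)\,d\tau\longrightarrow0\quad(n\to\infty),$$
the limit vanishing because $\Gamma(n\alpha)$ grows faster than any geometric sequence while $\int_0^Ty<\infty$ by local integrability. Passing to the limit in the iterated inequality and using the monotone convergence theorem to bring the sum inside the integral yields
$$y(t)\leq z(t)+\int_0^t\Bigl(\sum_{n=1}^\infty\frac{(c\Gamma(\alpha))^n}{\Gamma(n\alpha)}(t-\tau)^{n\alpha-1}\Bigr)z(\tau)\,d\tau.$$
Then termwise differentiation of the power series $E_{\alpha,1}(c\Gamma(\alpha)s^\alpha)=\sum_{k\geq0}\frac{(c\Gamma(\alpha))^ks^{k\alpha}}{\Gamma(k\alpha+1)}$ (valid on all of $\mathbb{C}$; cf. Lemma \ref{L:Der E}) identifies the inner series with $\frac{d}{ds}E_{\alpha,1}(c\Gamma(\alpha)s^\alpha)$ evaluated at $s=t-\tau$, i.e. with the Mittag-Leffler kernel in the statement; and when $z(t)\equiv z$ is constant one instead integrates the series term by term, using $\int_0^t(t-\tau)^{n\alpha-1}d\tau=t^{n\alpha}/(n\alpha)$ and $n\alpha\,\Gamma(n\alpha)=\Gamma(n\alpha+1)$, to recognize $z\sum_{n\geq0}\frac{(c\Gamma(\alpha))^nt^{n\alpha}}{\Gamma(n\alpha+1)}=zE_{\alpha,1}(c\Gamma(\alpha)t^\alpha)$.

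The only genuine obstacle is bookkeeping rather than anything conceptual: carrying the weak singularity $(t-\tau)^{\alpha-1}$ correctly through the induction in Step 2, and making sure the remainder estimate in Step 3 exploits only the assumed local integrability of $y$ (not boundedness), which is exactly what is needed for the conclusion to hold on all of $[0,b)$. Once these points are handled carefully, the iteration in Step 1 and the identification of the summed kernel in Step 4 are routine.
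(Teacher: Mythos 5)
Your proof is correct: the paper does not prove this lemma itself but quotes it from \cite[Lemma 7.1.1]{H81}, and your Neumann-series iteration — monotone positive operator $B$, the induction computing $(B^n\phi)(t)=\frac{(c\Gamma(\alpha))^n}{\Gamma(n\alpha)}\int_0^t(t-\tau)^{n\alpha-1}\phi(\tau)\,d\tau$ via the Beta-function identity, the remainder $(B^ny)(t)\to0$ using only local integrability, and summation of the kernel series — is exactly the argument given there. One reading note: the kernel you identify is $\frac{d}{ds}E_{\alpha,1}(c\Gamma(\alpha)s^\alpha)\big|_{s=t-\tau}$, so the $\frac{d}{d\tau}$ appearing in the statement must be understood as differentiation of $E_{\alpha,1}$ with respect to its argument (equivalently $\frac{d}{dt}$), as in Henry's original formulation; with the literal $\frac{d}{d\tau}$ the sign would be wrong, and your identification is the intended one.
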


\subsection{Generalized Sobolev spaces} In this section, we fix a few definitions concerning the generalized Sobolev space over $\mathcal{H}.$

\begin{defi}\label{d:H rho}
Let $\rho\in\mathbb{R}$. We denote the Sobolev space by $\mathcal{H}^{\rho}:=\{v\in\mathcal{H}:\; \mathcal{M}^\rho v\in \mathcal{H}\}$ with the norm
$$\|v\|_{\mathcal{H}^\rho}=\left(\sum_{\xi\in\mathcal{I}}|(1+\mu_\xi)^\rho(v,\omega_\xi)_{\mathcal{H}}|^2\right)^\frac{1}{2}.$$
\end{defi}
\begin{defi}
Let $\rho\in\mathbb{R}$. For $0<\alpha<1$ we denote by ${X}^\alpha([0,T];\mathcal{H}^{\rho})$ the space of all continuous functions $g:[0,T]\rightarrow \mathcal{H}^\rho$ with also continuous $\mathcal{D}^\alpha_t g:[0,T]\rightarrow \mathcal{H}^\rho$, such that
$$
\|g\|_{{X}^\alpha([0,T];\mathcal{H}^{\rho})}:=\|g\|_{C([0,T];\mathcal{H}^{\rho})}+\|\mathcal{D}_t^\alpha g\|_{C([0,T];\mathcal{H}^{\rho})}<\infty.
$$
The space ${X}^\alpha([0,T];\mathcal{H}^{\rho})$ equipped with the norm above is a Banach space.
\end{defi}
\begin{defi}
For $\theta\in(0,1)$ we set
$$C^\theta([0,T];\mathcal{H})=\left\{f\in C([0,T];\mathcal{H}):\;\sup_{0\leq t<s\leq T}\frac{\|f(t)-f(s)\|_\mathcal{H}}{|t-s|^\theta}<\infty\right\}$$
and
$$\|f\|_{C^\theta([0,T];\mathcal{H})}=\|f\|_{C([0,T];\mathcal{H})}+\sup_{0\leq t<s\leq T}\frac{\|f(t)-f(s)\|_\mathcal{H}}{|t-s|^\theta}.$$
\end{defi}

\section{Existence and Uniqueness of the solutions of Problem \ref{P:D}}\label{S:D}

In this section, we consider Problem \ref{P:D} and prove the existence and uniqueness of the generalized solution and deduce some regularity results. 

During this section, let us suppose that $a,\,h$ and $f$ satisfy the following assumptions:
\begin{claim}\label{A:D}

$(I)$ $\sigma\in \mathcal{C}^+[0,T]:=\{\sigma\in C[0,T]:\;\sigma(t)\geq m_{\sigma}>0,\,t\in[0,T]\};$

$(II)$ $f\in C([0,T]; \mathcal{H});$

$(III)$ $h\in \mathcal{H}^1.$
\end{claim}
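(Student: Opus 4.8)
The final statement is an \emph{Assumption} (the paper's \texttt{claim} environment is declared as ``Assumption''), not a proposition: it merely records the standing hypotheses $(I)$--$(III)$ on the data $\sigma$, $f$, $h$ that are imposed throughout Section~\ref{S:D}. Consequently there is nothing to prove. One does not establish Assumption~\ref{A:D}; one \emph{posits} it, and then verifies later that solutions of \eqref{EQ:Frac Pseudo}--\eqref{CON:IN} exist and are regular \emph{under} these conditions. Any purported ``proof'' here would be a category mistake, since a hypothesis carries no truth value to be demonstrated; the only sensible forward step is to carry $(I)$--$(III)$ unchanged into the arguments that use them.

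What a proposal can usefully offer instead is a reading of why each clause is the natural minimal requirement for the eigenfunction-expansion method that follows. Clause $(I)$ asks that the leading coefficient $\sigma$ be continuous and uniformly bounded below by $m_\sigma>0$: continuity lets one integrate against the weakly singular Caputo kernel and apply the generalized mean value theorem (Theorem~\ref{Cor}), while the strict positivity $\sigma(t)\ge m_\sigma$ keeps the time-dependent operator $\sigma(t)\mathcal{M}$ coercive and forces the modal dynamics to decay, so that the Mittag-Leffler estimates of Lemmas~\ref{L:E1E2} and~\ref{L:Der E} can be invoked term by term. Clause $(II)$, $f\in C([0,T];\mathcal{H})$, guarantees that each Fourier coefficient $(f(t),\omega_\xi)_{\mathcal{H}}$ is continuous and uniformly bounded, which is exactly what the Duhamel integrals in the modal representation require for convergence. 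Clause $(III)$, $h\in\mathcal{H}^1$, places the initial datum one order above $\mathcal{H}$, ensuring $\sum_{\xi\in\mathcal{I}}(1+\mu_\xi)^2|(h,\omega_\xi)_{\mathcal{H}}|^2<\infty$, so that $\mathcal{M}v(t)$ stays controlled and the series defining $v$ lies in the solution space $X^\alpha([0,T];\mathcal{H})$.

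The only genuine ``obstacle'' is thus one of classification rather than of mathematics: recognizing that the displayed block is a set of standing hypotheses and not a lemma. Once that is acknowledged, the task reduces to a bookkeeping remark, and the appropriate continuation is not to prove anything but to invoke $(I)$--$(III)$ precisely at the points indicated above in the existence, uniqueness, and regularity arguments of Section~\ref{S:D}.
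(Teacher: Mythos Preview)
Your reading is correct: the \texttt{claim} environment is declared as ``Assumption'' in the preamble, so this block merely records the standing hypotheses on $\sigma$, $f$, and $h$ for Section~\ref{S:D}, and the paper offers no proof either. Your explanatory commentary on the role of each clause is accurate and matches how the paper subsequently deploys $(I)$--$(III)$.
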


A generalised solution of Problem \ref{P:D} is the function $v(t;\sigma)$ such that $v\in C([0,T];\mathcal{H}^1)$ together with $\mathcal{D}_t^\alpha v\in C([0,T];\mathcal{H}).$ 
The notation $v(t;\sigma)$ is used to indicate the dependence of the generalized solution $v$ on the diffusion coefficient $\sigma(t).$

For Problem \ref{P:D}, we now present the main theorem of this section.

\begin{thm}\label{thm:MainThmDP}
Let Assumption \ref{A:D} be satisfied. Then there exists the unique generalized solution, denoted as $v(t;\sigma)$, to \ref{P:D}, which can be represented as \begin{equation}\label{EXPANTION u}
    v(t;\sigma)=\sum_{\xi\in\mathcal{I}}v_\xi(t;\sigma)\omega_\xi,\; t\in [0,T].
\end{equation}
Furthermore, one has the following regularity estimates:

$i)$ If $f\in C^\alpha([0,T];\mathcal{H}),$ then
$$\|v\|_{C([0,T];\mathcal{H}^1)}+\|\mathcal{D}_t^\alpha v\|_{C([0,T];\mathcal{H})}\leq C(\|h\|_{\mathcal{H}^1}+(T^\alpha+1)\|f\|_{C^\alpha([0,T];\mathcal{H})}).$$

$ii)$ If $f\in C([0,T];\mathcal{H}^\frac{1}{2}),$ then
$$\|v\|_{C([0,T];\mathcal{H}^1)}+\|\mathcal{D}_t^\alpha v\|_{C([0,T];\mathcal{H})}\leq C\big(\|h\|_{\mathcal{H}^1}+(T^{\alpha}+T^\frac{\alpha}{2}+1)\|f\|_{C([0,T];\mathcal{H}^\frac{1}{2})}\big).$$

$iii)$ If $f\in C([0,T];\mathcal{H}^\frac{1}{2})$ and $\inf_{\xi\in\mathcal{I}}\mu_\xi>0,$ then
$$\|v\|_{C([0,T];\mathcal{H}^1)}+\|\mathcal{D}_t^\alpha v\|_{C([0,T];\mathcal{H})}\leq C\big(\|h\|_{\mathcal{H}^1}+\left(\frac{1}{\sqrt{\inf_{\xi\in\mathcal{I}}\mu_\xi}}+1\right)\|f\|_{C([0,T];\mathcal{H}^\frac{1}{2})}\big),$$
where $C>0$ does not depend on $T.$
\end{thm}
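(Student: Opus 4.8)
The plan is to solve the problem coordinate-wise. Writing $v(t;\sigma)=\sum_{\xi\in\mathcal{I}}v_\xi(t)\omega_\xi$, $f(t)=\sum_{\xi}f_\xi(t)\omega_\xi$, and $h=\sum_\xi h_\xi\omega_\xi$, the equation \eqref{EQ:Frac Pseudo} together with \eqref{CON:IN} decouples into the family of scalar fractional ODEs $\mathcal{D}_t^\alpha v_\xi(t)+\sigma(t)\mu_\xi v_\xi(t)=f_\xi(t)$, $v_\xi(0)=h_\xi$. First I would note that, since $\sigma$ is not constant, the usual closed-form Mittag-Leffler solution is not available; instead I would invoke a known representation for the scalar problem with time-dependent coefficient (of the form $v_\xi(t)=h_\xi\Phi_\xi(t)+\int_0^t\Psi_\xi(t,\tau)f_\xi(\tau)\,d\tau$ with kernels built from $E_{\alpha,1}$ and $E_{\alpha,\alpha}$ evaluated at $-\mu_\xi\int_\tau^t\sigma$), and record the pointwise bounds $0\le \Phi_\xi(t)\le 1$ and $0\le \Psi_\xi(t,\tau)\le C(t-\tau)^{\alpha-1}$ coming from Lemmas \ref{L:E1E2} and \ref{L:Der E} and the lower bound $\sigma\ge m_\sigma>0$. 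Actually, to keep things self-contained one may instead define $v_\xi$ as the unique fixed point of the Volterra integral equation obtained by applying $I_t^\alpha$ (Lemma \ref{I0D}) to the scalar ODE, and derive the same bounds directly; either route gives existence, uniqueness and the estimate $|v_\xi(t)|\le |h_\xi| + C\int_0^t(t-\tau)^{\alpha-1}|f_\xi(\tau)|\,d\tau$, and a companion estimate for $\mu_\xi v_\xi(t)$.

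The core of the argument is then to upgrade these scalar bounds to the $\mathcal{H}^1$ and $\mathcal{D}_t^\alpha$-in-$\mathcal{H}$ estimates by summing in $\xi$. For part $i)$, given $f\in C^\alpha([0,T];\mathcal{H})$, I would split $f_\xi(\tau)=f_\xi(t)+(f_\xi(\tau)-f_\xi(t))$ inside the Duhamel integral so that the first piece is controlled by $\|f(t)\|_{\mathcal H}$ times $\int_0^t(t-\tau)^{\alpha-1}d\tau=t^\alpha/\alpha$, while the Hölder difference gives $\int_0^t(t-\tau)^{\alpha-1}|t-\tau|^\alpha d\tau\le C\,T^{2\alpha}$ times the seminorm; for the $\mu_\xi v_\xi$ bound one uses the sharper kernel $\mu_\xi(t-\tau)^{\alpha-1}E_{\alpha,\alpha}(-m_\sigma\mu_\xi(t-\tau)^\alpha)$, whose time-integral is uniformly bounded (this is exactly why the $C^\alpha$ hypothesis, not merely continuity, is needed — it absorbs the non-integrable-looking factor $\mu_\xi$). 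Squaring, summing over $\xi$, and using Parseval turns $\sum_\xi|h_\xi(1+\mu_\xi)|^2$ into $\|h\|_{\mathcal H^1}^2$ and the $f$-terms into $\|f\|_{C^\alpha([0,T];\mathcal H)}$; finally $\mathcal{D}_t^\alpha v = f - \sigma\mathcal{M}v$ in $\mathcal H$ gives the $\mathcal{D}_t^\alpha$ bound, and continuity in $t$ of both $v$ (into $\mathcal H^1$) and $\mathcal{D}_t^\alpha v$ (into $\mathcal H$) follows from uniform convergence of the series, which the same estimates provide as tail bounds. For parts $ii)$ and $iii)$ the source term is handled differently: instead of the Hölder trick one keeps $f_\xi(\tau)$ as is but distributes a power $\mu_\xi^{1/2}$ onto the kernel, using $\mu_\xi^{1/2}(t-\tau)^{\alpha-1}E_{\alpha,\alpha}(-m_\sigma\mu_\xi(t-\tau)^\alpha)\le C(t-\tau)^{\alpha/2-1}$ (from $E_{\alpha,\alpha}(-z)\le C/(1+z)$, Lemma \ref{L:converge E}), so that $\int_0^t(t-\tau)^{\alpha/2-1}|f_\xi(\tau)|d\tau$ is controlled by $\|f\|_{C([0,T];\mathcal H^{1/2})}$ after pulling the $\mu_\xi^{1/2}$ onto $f$; the resulting time-integral contributes $T^{\alpha/2}$, and the other $\mu_\xi$-power from the $\mathcal H^1$ norm is balanced against the remaining decay of the Mittag-Leffler factor, producing the $T^\alpha+T^{\alpha/2}+1$ prefactor. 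Part $iii)$ is the same computation but with $\inf_\xi\mu_\xi>0$ available: then $\mu_\xi^{1/2}(t-\tau)^{\alpha-1}E_{\alpha,\alpha}(-m_\sigma\mu_\xi(t-\tau)^\alpha)$ can be bounded by $C\,\mu_\xi^{-1/2}(t-\tau)^{-1}\cdot(\text{integrable correction})$, or more cleanly one integrates the identity $\frac{d}{dt}E_{\alpha,1}(-m_\sigma\mu_\xi t^\alpha)=-m_\sigma\mu_\xi t^{\alpha-1}E_{\alpha,\alpha}(-m_\sigma\mu_\xi t^\alpha)$ (Lemma \ref{L:Der E}) over $[\,0,T\,]$ to get $\int_0^T\mu_\xi\tau^{\alpha-1}E_{\alpha,\alpha}(-m_\sigma\mu_\xi\tau^\alpha)d\tau=\frac{1}{m_\sigma}\bigl(1-E_{\alpha,1}(-m_\sigma\mu_\xi T^\alpha)\bigr)\le \frac{1}{m_\sigma}$ uniformly, and then one extra factor $\mu_\xi^{-1/2}\le(\inf_\xi\mu_\xi)^{-1/2}$ is factored out, giving a constant independent of $T$.

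For uniqueness I would argue directly: if $v,\tilde v$ are two generalized solutions then $w=v-\tilde v$ solves the homogeneous problem with $w(0)=0$, so each Fourier coefficient $w_\xi$ satisfies $\mathcal{D}_t^\alpha w_\xi+\sigma(t)\mu_\xi w_\xi=0$, $w_\xi(0)=0$; applying $I_t^\alpha$ and Lemma \ref{I0D} gives $|w_\xi(t)|\le m_\sigma^{-1}... $ wait — rather, $w_\xi(t)=-\frac{\mu_\xi}{\Gamma(\alpha)}\int_0^t(t-\tau)^{\alpha-1}\sigma(\tau)w_\xi(\tau)d\tau$, so $|w_\xi(t)|\le \frac{\mu_\xi\|\sigma\|_\infty}{\Gamma(\alpha)}\int_0^t(t-\tau)^{\alpha-1}|w_\xi(\tau)|d\tau$, and the singular Gronwall inequality of Lemma \ref{L:Gronwall} with $z\equiv0$ forces $w_\xi\equiv0$ for every $\xi$, hence $w\equiv0$.

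The main obstacle is the source-term estimate: naively Duhamel produces a factor $\mu_\xi$ (from $\mathcal{M}v$) against only $(t-\tau)^{\alpha-1}$, which is not summable, so the whole scheme hinges on correctly trading powers of $\mu_\xi$ against the decay of $E_{\alpha,\alpha}(-c\mu_\xi(t-\tau)^\alpha)$ via Lemma \ref{L:converge E} — and doing the bookkeeping so that the time-integrals converge and the $T$-dependence comes out exactly as stated ($T^\alpha$, $T^\alpha+T^{\alpha/2}$, or $T$-free under $\inf\mu_\xi>0$). A secondary technical point is justifying that the termwise-constructed series genuinely lies in $C([0,T];\mathcal H^1)$ with $\mathcal{D}_t^\alpha$ acting termwise, which is handled by showing the partial sums are Cauchy in the relevant norms using the same tail estimates.
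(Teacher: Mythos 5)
There is a genuine gap at the very start of your argument, and everything downstream depends on it. The ``known representation'' you invoke for the scalar problem with time-dependent coefficient, with kernels built from $E_{\alpha,1}$ and $E_{\alpha,\alpha}$ evaluated at $-\mu_\xi\int_\tau^t\sigma$, does not exist for the Caputo equation: unlike the case $\alpha=1$, there is no chain rule or variation-of-constants formula of this type for fractional derivatives, so you cannot read off pointwise bounds $0\le\Phi_\xi\le 1$, $0\le\Psi_\xi(t,\tau)\le C(t-\tau)^{\alpha-1}$ ``with the lower bound $\sigma\ge m_\sigma$'' from such a formula. Your fallback (define $v_\xi$ as the fixed point of the Volterra equation obtained by applying $I_t^\alpha$, then ``derive the same bounds directly'') is fine for existence and uniqueness of $v_\xi$ (and your uniqueness argument via the singular Gronwall Lemma \ref{L:Gronwall} is correct), but it silently skips the one estimate the whole regularity proof hinges on: the comparison of the variable-coefficient solution with the constant-coefficient solution at the lower bound $m_\sigma$, i.e.
\begin{equation*}
|v^f_\xi(t;\sigma)|\leq \int_0^t |f_\xi(s)|(t-s)^{\alpha-1} E_{\alpha,\alpha}(-\mu_\xi m_{\sigma} (t-s)^\alpha)\,ds,
\qquad
|v^h_\xi(t;\sigma)|\leq |h_\xi|\,E_{\alpha,1}(-\mu_\xi m_{\sigma} t^\alpha).
\end{equation*}
These are Lemmas \ref{q_a>a ur} and \ref{q_a>a ui} in the paper, and they are not ``direct'': the paper proves them via a sign-preservation/comparison argument based on the fractional maximum principle (Lemma \ref{L:max+min}, through Lemma \ref{Cor:1}), splitting $f_\xi=f_\xi^{max}+f_\xi^{min}$ as in \eqref{f+-} because the comparison only holds for one-signed data. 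Note also that a bound of the form $\Psi_\xi(t,\tau)\le C(t-\tau)^{\alpha-1}$ with $C$ independent of $\mu_\xi$ would not be enough: for the $\mathcal{H}^1$ estimate you must control $\mu_\xi v_\xi$, and the only source of decay in $\mu_\xi$ is the factor $E_{\alpha,\alpha}(-\mu_\xi m_\sigma(t-\tau)^\alpha)$ inside the integral — precisely the object your fallback has not justified placing there.

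Once those two comparison bounds are granted, the rest of your bookkeeping is essentially the paper's: the Hölder splitting $f_\xi(\tau)=f_\xi(t)+(f_\xi(\tau)-f_\xi(t))$ together with Lemma \ref{L:converge E} for part $i)$, distributing $\mu_\xi^{1/2}$ and using the identity from Lemma \ref{L:Der E} for part $ii)$, and factoring out $(\inf_\xi\mu_\xi)^{-1/2}$ from the $T$-uniform integral $\int_0^t\mu_\xi(t-\tau)^{\alpha-1}E_{\alpha,\alpha}(-m_\sigma\mu_\xi(t-\tau)^\alpha)\,d\tau\le 1/m_\sigma$ for part $iii)$, plus recovering $\mathcal{D}_t^\alpha v=f-\sigma\mathcal{M}v$ for the second norm. (A minor additional point: if you set up existence via the $I_t^\alpha$-Volterra equation in the sup norm, the naive contraction constant is $\mu_\xi M_\sigma T^\alpha/\Gamma(\alpha+1)$, which need not be less than $1$; you need a weighted norm or interval subdivision, whereas the paper avoids this by rewriting \eqref{FODE} with the constant $M_\sigma$ and obtaining the $\xi$- and $T$-independent contraction constant $(M_\sigma-m_\sigma)/M_\sigma$ in \eqref{Volt int}--\eqref{Volt OP}.) So the proposal is salvageable, but as written the central estimate transferring the lower bound $m_\sigma$ into the Mittag-Leffler kernels is asserted rather than proved.
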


\subsection{Proof of Theorem \ref{thm:MainThmDP}} First, we prove unique existence of the generalized solution $v(t;\sigma)$ of Problem \ref{P:D} in subsection \ref{S:unieque existence}. Second, we show regularity  properties of $v(t;\sigma)$ in subsection \ref{S:regularity}.

\subsubsection{Existence and Uniqueness of $v(t;\sigma)$}\label{S:unieque existence} We now present the theorem about uniqueness and existence of the solution of Problem \ref{P:D}.
\begin{thm}\label{Th:DEU} 
Let Assumption \ref{A:D} hold. Then the following statements hold true:
    
    1) For each $\xi\in\mathcal{I}$ there is a unique solution $v_\xi(t;\sigma)$ of the following problem
    \begin{equation}\label{FODE}
    \mathcal{D}_t^\alpha v_\xi(t;\sigma)+\mu_\xi \sigma(t)v_\xi(t;\sigma)=f_\xi(t),\;v_\xi(0;\sigma)=h_\xi,\;\xi\in\mathcal{I},
\end{equation} where $h_\xi=(h,\omega_\xi),\;f_\xi(t)=(f(t),\omega_\xi),\;\xi\in\mathcal{I}.$ This solution belongs to $X^\alpha[0,T];$

    2) Problem \ref{P:D} has a unique generalised solution $v(t;\sigma)$ with the spectral representation \eqref{EXPANTION u}.
\end{thm}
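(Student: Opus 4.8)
The plan is to handle the problem mode-by-mode via the spectral decomposition, then assemble the series. For part 1), fix $\xi\in\mathcal{I}$ and consider the scalar fractional ODE \eqref{FODE} with a variable (but continuous and positive) coefficient $\mu_\xi\sigma(t)$. First I would apply $I_t^\alpha$ to \eqref{FODE} and use Lemma \ref{I0D} to rewrite it as the Volterra integral equation
$$
v_\xi(t)=h_\xi-\frac{\mu_\xi}{\Gamma(\alpha)}\int_0^t (t-\tau)^{\alpha-1}\sigma(\tau)v_\xi(\tau)\,d\tau+\frac{1}{\Gamma(\alpha)}\int_0^t (t-\tau)^{\alpha-1}f_\xi(\tau)\,d\tau.
$$
Existence and uniqueness of a continuous solution on $[0,T]$ follow from the standard Picard iteration / contraction argument for Volterra equations with weakly singular kernel $(t-\tau)^{\alpha-1}$ (the kernel is integrable since $\alpha\in(0,1)$, and one iterates on subintervals or uses the weighted-norm trick). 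Then I would verify that this $v_\xi$, together with $\mathcal{D}_t^\alpha v_\xi = f_\xi-\mu_\xi\sigma v_\xi$, is continuous, so $v_\xi\in X^\alpha[0,T]$, and that applying $\mathcal{D}_t^\alpha$ (using Lemma \ref{I0D} again) recovers \eqref{FODE} with the correct initial value. Uniqueness: the difference of two solutions satisfies the homogeneous integral equation, and Lemma \ref{L:Gronwall} (Gronwall with weakly singular kernel, $z\equiv0$) forces it to vanish.

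For part 2), I would set $v(t;\sigma):=\sum_{\xi\in\mathcal{I}}v_\xi(t;\sigma)\omega_\xi$ and prove this series converges in $C([0,T];\mathcal{H}^1)$ with $\mathcal{D}_t^\alpha v\in C([0,T];\mathcal{H})$, i.e. that it is a generalized solution. The key quantitative input is a bound on $v_\xi$ in terms of $\mu_\xi$. Using the integral equation above, Lemma \ref{L:E1E2}/\eqref{EST: Mittag 0} type estimates, and the Gronwall Lemma \ref{L:Gronwall} applied to $|v_\xi(t)|$, I expect an estimate of the form
$$
\mu_\xi|v_\xi(t;\sigma)|\le C\big(\mu_\xi|h_\xi| + \sup_{0\le\tau\le T}|f_\xi(\tau)|\big),
$$
uniformly in $t\in[0,T]$ (the positivity $\sigma\ge m_\sigma>0$ and complete monotonicity of the Mittag-Leffler function are what make the coefficient $\mu_\xi$ in front of $h_\xi$, rather than something worse, survive). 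Squaring, summing over $\xi$, and using $h\in\mathcal{H}^1$ and $f\in C([0,T];\mathcal{H})$ gives $\sum_\xi\mu_\xi^2|v_\xi(t)|^2<\infty$ uniformly, hence $v(t)\in\mathcal{H}^1$; a similar estimate on tails of the series together with an Arzelà–Ascoli / uniform-Cauchy argument in $t$ gives continuity into $\mathcal{H}^1$. Then $\mathcal{D}_t^\alpha v_\xi=f_\xi-\mu_\xi\sigma v_\xi$ shows $\sum_\xi|\mathcal{D}_t^\alpha v_\xi(t)|^2<\infty$, so $\mathcal{D}_t^\alpha v\in C([0,T];\mathcal{H})$ and \eqref{EQ:Frac Pseudo}, \eqref{CON:IN} hold termwise, hence in $\mathcal{H}$. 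Uniqueness of the generalized solution follows by projecting onto each $\omega_\xi$: the coefficients must solve \eqref{FODE}, which has a unique solution by part 1).

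The main obstacle I anticipate is the uniform-in-$\xi$ estimate for $v_\xi$ with the sharp $\mu_\xi$-weight. When $\sigma\equiv$ const one simply has $v_\xi(t)=h_\xi E_{\alpha,1}(-\mu_\xi\sigma t^\alpha)+\int_0^t(t-\tau)^{\alpha-1}E_{\alpha,\alpha}(-\mu_\xi\sigma(t-\tau)^\alpha)f_\xi(\tau)\,d\tau$, and the decay of the Mittag-Leffler functions (Lemma \ref{L:converge E}) does the job; but with $\sigma=\sigma(t)$ there is no such closed form, so one must instead run a Gronwall argument on the integral equation while carefully tracking the $\mu_\xi$-dependence of the constants, making sure the Gronwall multiplier $E_{\alpha,1}(c\Gamma(\alpha)t^\alpha)$ does not blow up in $\xi$ — which it does not, because the bad term is $\mu_\xi\int(t-\tau)^{\alpha-1}\sigma|v_\xi|$, and absorbing the $\mu_\xi$ into $v_\xi$ turns the kernel constant into $\|\sigma\|_\infty$, independent of $\xi$. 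Handling the regularity claims (items $i)$–$iii)$ of Theorem \ref{thm:MainThmDP}) is deferred to the next subsection and is not needed here.
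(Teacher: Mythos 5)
Your treatment of part 1) is correct, though it takes a different route from the paper: you work with the plain Riemann--Liouville Volterra equation $v_\xi=h_\xi+I_t^\alpha[f_\xi-\mu_\xi\sigma v_\xi]$ and invoke standard weakly-singular Volterra theory (weighted norm or iterated contraction), whereas the paper rewrites \eqref{FODE} with a constant $M_{\sigma}\geq\sigma$ moved to the left-hand side, uses the Mittag-Leffler resolvent representation \eqref{Volt int}, and gets a one-step contraction on all of $C[0,T]$ with constant $(M_{\sigma}-m_{\sigma})/M_{\sigma}<1$, uniform in $T$ and in $\xi$, thanks to the identity $\int_0^t\mu_\xi(t-s)^{\alpha-1}E_{\alpha,\alpha}(-\mu_\xi M_{\sigma}(t-s)^\alpha)\,ds=\frac{1}{M_{\sigma}}\bigl(1-E_{\alpha,1}(-\mu_\xi M_{\sigma}t^\alpha)\bigr)\leq\frac{1}{M_{\sigma}}$ (Lemma \ref{L:Der E}). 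For a fixed $\xi$ either argument is acceptable, so part 1) stands.

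The genuine gap is in your key quantitative step for part 2). The bound $\mu_\xi|v_\xi(t;\sigma)|\leq C(\mu_\xi|h_\xi|+\sup_\tau|f_\xi(\tau)|)$ is true, but it does not follow from Lemma \ref{L:Gronwall} applied to the plain integral equation as you sketch. Writing $y_\xi=\mu_\xi v_\xi$, the rescaled unknown still satisfies $\mathcal{D}_t^\alpha y_\xi+\mu_\xi\sigma(t)y_\xi=\mu_\xi f_\xi$: ``absorbing $\mu_\xi$ into $v_\xi$'' changes the data, not the zero-order coefficient, so the Gronwall kernel constant is $\mu_\xi\|\sigma\|_{C[0,T]}$, not $\|\sigma\|_{C[0,T]}$, and the resulting multiplier $E_{\alpha,1}(\mu_\xi\|\sigma\|_{C[0,T]}t^\alpha)$ blows up as $\mu_\xi\to\infty$; if instead you keep $|v_\xi|$ on the left and $\mu_\xi|v_\xi|$ under the integral, the inequality involves two different unknowns and does not close. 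The $\xi$-uniform smoothing has to come from the sign of $-\mu_\xi\sigma$, i.e.\ from the Mittag-Leffler resolvent: the paper obtains it through the comparison/maximum-principle argument (Lemma \ref{L:max+min}, Lemma \ref{Cor:1}) which dominates the variable-coefficient solution by the constant-coefficient one, yielding $|v_\xi^f(t;\sigma)|\leq\int_0^t|f_\xi(s)|(t-s)^{\alpha-1}E_{\alpha,\alpha}(-\mu_\xi m_{\sigma}(t-s)^\alpha)\,ds$ and $|v_\xi^h(t;\sigma)|\leq|h_\xi|E_{\alpha,1}(-\mu_\xi m_{\sigma}t^\alpha)$ (Lemmas \ref{q_a>a ur}, \ref{q_a>a ui}), after which the identity above (with $m_{\sigma}$ in place of $M_{\sigma}$) gives the $\mu_\xi$-uniform constant; alternatively one can absorb via the $M_{\sigma}$-shifted fixed-point equation \eqref{Volt int}, since the contraction factor $(M_{\sigma}-m_{\sigma})/M_{\sigma}$ is less than $1$. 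A secondary problem: even granted your estimate, squaring and summing requires $\sum_\xi\sup_\tau|f_\xi(\tau)|^2<\infty$, which is not implied by $f\in C([0,T];\mathcal{H})$ (the supremum and the sum do not commute); this is precisely why the paper's quantitative $\mathcal{H}^1$-bounds (Lemma \ref{Cor:regularity ur}) assume $f\in C^\alpha([0,T];\mathcal{H})$ or $f\in C([0,T];\mathcal{H}^{\frac{1}{2}})$ and avoid the supremum inside the sum by splitting $f_\xi(\tau)=(f_\xi(\tau)-f_\xi(t))+f_\xi(t)$ or by trading half a power of $\mu_\xi$ onto $f$. So your plan for assembling the series needs to be replaced by (or reduced to) these arguments.
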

\begin{proof}
Let us first prove the existence and uniqueness of the solution $v_\xi(\cdot;\sigma)\in C[0, T]$ of \eqref{FODE} for each $\xi\in\mathcal{I}.$ To this end for each $\xi\in\mathcal{I}$ we rewrite the Cauchy-type problem \eqref{FODE} in the following form:
\begin{equation}\label{FODE 1}
    \begin{cases}
    &\mathcal{D}_t^\alpha v_\xi(t;\sigma)+\mu_\xi M_{\sigma} v_\xi(t;\sigma)=f_\xi(t)+\mu_\xi (M_{\sigma}-\sigma(t))v_\xi(t;\sigma),\;t\in(0,T],\\
    &v_\xi(0;\sigma)=h_\xi,
\end{cases}\end{equation}
where $M_{\sigma}$ is a constant such that
\begin{equation}\label{q<Q}
   0<m_{\sigma}\leq \sigma(t)<M_{\sigma}\;\text{on}\;[0,T].
\end{equation}
Assumption \ref{A:D} $(I)$ guaranties the existence of $M_{\sigma}>m_{\sigma}.$

According to \cite{LG99}, the solution of equation \eqref{FODE 1} can be represented in the form
\begin{equation}\label{Volt int}
    \begin{split}
        v_\xi(t;\sigma)&=h_\xi E_{\alpha,1}(-\mu_\xi M_{\sigma} t^\alpha)\\
        &+\int_0^t \left[f_\xi(s)+\mu_\xi(M_{\sigma}-\sigma(s))v_\xi(s;\sigma)\right](t-s)^{\alpha-1}E_{\alpha,\alpha}(-\mu_\xi M_{\sigma} (t-s)^\alpha)ds,
    \end{split}
\end{equation}
for each $\xi\in\mathcal{I}.$ Since the solution $v_\xi(t;\sigma)$ of \eqref{FODE 1} is represented as \eqref{Volt int}, to prove the existence and uniqueness of the solution $v_\xi(\cdot;\sigma)\in C[0,T]$ of \eqref{FODE} it is sufficient to prove the existence of a unique solution
$v_\xi(\cdot;\sigma)\in C[0,T]$ of the equation \eqref{Volt int}. For this, we use the Banach fixed point Theorem \ref{Th:BF} for the space $C[0, T].$ We rewrite the integral equation \eqref{Volt int} in the form $v_\xi(t;\sigma)=(Av_\xi)(t;\sigma),$ where 
\begin{equation}\label{Volt OP}
    \begin{split}
        (Av_\xi)(t;\sigma)&=h_\xi E_{\alpha,1}(-\mu_\xi M_{\sigma} t^\alpha)\\
        &+\int_0^t \left[f_\xi(s)+\mu_\xi(M_{\sigma}-\sigma(s))v_\xi(s;\sigma)\right](t-s)^{\alpha-1}E_{\alpha,\alpha}(-\mu_\xi M_{\sigma} (t-s)^\alpha)ds.
    \end{split}
\end{equation}
 To apply the Banach fixed point Theorem \ref{Th:BF}, we have to prove the following: 
 $i)$ if $v_\xi\in C[0,T],$ then $Av_\xi\in C[0,T];$ $ii)$ for any $u_\xi,\,w_\xi\in C[0,T],$ the following estimate holds:
 \begin{equation}\label{Tu-Tv}
     \|Au_\xi-Aw_\xi\|_{C[0,T]}\leq \beta \|u_\xi-w_\xi\|_{C[0,T]},\;\beta\in(0,1).
 \end{equation}
We first prove $i).$ To this end we take $v_\xi(\cdot;\sigma)\in C[0,T]$ and we denote
\begin{equation}\label{T11}
    (A_1 f_\xi)(t)=\int_0^t f_\xi(s)(t-s)^{\alpha-1}E_{\alpha,\alpha}(-\mu_\xi M_{\sigma} (t-s)^\alpha)ds
\end{equation}
and 
\begin{equation}\label{T22}
    (A_2 v_\xi)(t;\sigma)=\int_0^t \mu_\xi(M_{\sigma}-\sigma(s))v_\xi(s;\sigma)(t-s)^{\alpha-1}E_{\alpha,\alpha}(-\mu_\xi M_{\sigma} (t-s)^\alpha)ds.
\end{equation}

Let us formulate the following lemma. 
\begin{lemma}\label{L:con f} Let Assumption \ref{A:D} $(II)$ hold true. Then for each $\xi\in \mathcal{I}$ the coefficient function $f_\xi(t)$ belongs to $C[0,T].$  
\end{lemma}
\begin{proof} In view of Assumption \ref{A:D} $(II)$ for any $\varepsilon>0$ exists $\delta=\delta(\varepsilon)$ such that $|t_1-t_2|<\delta$ $\forall t_1,t_2\in[0,T]:$
    \begin{equation*}\begin{split}
      \|f(t_1)-f(t_2)\|_\mathcal{H} <\varepsilon.  
    \end{split}\end{equation*}
This with the Bessel inequality gives us
\begin{equation*}\begin{split}
    \varepsilon&>\|f(t_1)-f(t_2)\|_\mathcal{H}\geq \left(\sum_{\xi\in\mathcal{I}}|f_\xi(t_1)-f_\xi(t_2)|^2\right)^\frac{1}{2}\\
    &\geq |f_\xi(t_1)-f_\xi(t_2)|.
\end{split}
\end{equation*}
This implies $f_\xi\in C[0,T]$ for each $\xi\in\mathcal{I}.$\end{proof} 
In view of Lemma \ref{L:Mon E}, the function $E_{\alpha,\alpha}(-z),\;z\geq 0$ possesses of derivatives $\frac{d^n}{dz^n}E_{\alpha,\alpha}(-z)$ for all $n\in\mathbb{N}.$ This implies that $E_{\alpha,\alpha}(-\mu_\xi M_{\sigma} t^\alpha)$ belongs to $C[0,T].$ Hence, the function $t^{\alpha-1}E_{\alpha,\alpha}(-\mu_\xi M_{\sigma} t^\alpha)$ is continuous in $t\in(0,T].$
This together with Lemma \ref{L:con f} tell us that $(A_1 f_\xi)(t)$ which is given by \eqref{T11} belongs to $C[0, T].$ At the beginning of proof $i)$ we suppose that $v_\xi(\cdot;\sigma)\in C[0,T]$ this in view of Lemma \ref{L:Der E} gives us that $(A_2 v_\xi)(t;\sigma)$ which is given by \eqref{T22} belongs to $C[0,T].$ Since the first term of \eqref{Volt OP} also belongs to $C[0,T],$ we have $Av_\xi\in C[0,T].$ 

Now we proceed to prove the estimate \eqref{Tu-Tv}. In view of \eqref{q<Q} and using \eqref{Volt OP}, we have
\begin{equation*}
    \begin{split}
    &|Au_\xi(t;\sigma)-Aw_\xi(t;\sigma)|\leq\\
    &\int_0^t |M_{\sigma}-\sigma(s)| |u_\xi(s;\sigma)-w_\xi(t;\sigma)| \mu_\xi(t-s)^{\alpha-1}E_{\alpha,\alpha}(-\mu_\xi M_{\sigma} (t-s)^\alpha)ds\\
    &\leq (M_{\sigma}-m_{\sigma})\|u_\xi-w_\xi\|_{C[0,T]}\int_0^t\mu_\xi(t-s)^{\alpha-1}E_{\alpha,\alpha}(-\mu_\xi M_{\sigma} (t-s)^\alpha)ds.
    \end{split}
\end{equation*}
Lemma \ref{L:Der E} gives
\begin{equation*}
   \int_0^t\mu_\xi(t-s)^{\alpha-1}E_{\alpha,\alpha}(-\mu_\xi M_{\sigma} (t-s)^\alpha)ds=\frac{1}{M_{\sigma}}\left(1-E_{\alpha,1}(-\mu_\xi M_{\sigma} t^\alpha)\right)\leq \frac{1}{M_{\sigma}}. 
\end{equation*}
Hence,
$$\|Au_\xi-Aw_\xi\|_{C[0,T]}\leq \frac{M_{\sigma}-m_{\sigma}}{M_{\sigma}}\|u_\xi-w_\xi\|_{C[0,T]}.$$
Here $\beta=\frac{M_{\sigma}-m_{\sigma}}{M_{\sigma}}<1.$ Hence by Banach fixed point Theorem \ref{Th:BF} there exists a unique solution $v_\xi=v_\xi^{*}\in C[0,T]$ for each $\xi\in\mathcal{I}$ to the equation \eqref{Volt int}.

To complete the proof of Theorem \ref{Th:DEU} $1)$ we must show that such a unique solution $v_\xi\in C[0,T]$ belongs to the space $X^\alpha[0,T].$ For this it is sufficient to prove that $\mathcal{D}_t^\alpha v_\xi\in C[0,T].$ 

Applying $\mathcal{D}_t^\alpha$ to \eqref{Volt int}, we have
\begin{equation}\label{Dau 0}
    \begin{aligned}
        &\mathcal{D}_t^\alpha v_\xi(t;\sigma)=\mathcal{D}_t^\alpha \biggl(h_\xi E_{\alpha,1}(-\mu_\xi M_{\sigma} t^\alpha)\\
        &+\int_0^t \left[f_\xi(s)+\mu_\xi(M_{\sigma}-\sigma(s))v_\xi(s;\sigma)\right](t-s)^{\alpha-1}E_{\alpha,\alpha}(-\mu_\xi M_{\sigma} (t-s)^\alpha)ds\biggr)\\
        &=\mathcal{D}_t^\alpha \biggl(h_\xi E_{\alpha,1}(-\mu_\xi M_{\sigma} t^\alpha)\biggr)\\
        &+\mathcal{D}_t^\alpha \biggl(\int_0^t \left[f_\xi(s)+\mu_\xi(M_{\sigma}-\sigma(s))v_\xi(s;\sigma)\right](t-s)^{\alpha-1}E_{\alpha,\alpha}(-\mu_\xi M_{\sigma} (t-s)^\alpha)ds\biggr),
    \end{aligned}
\end{equation}
for all $t\in[0,T].$ We calculate each term of \eqref{Dau 0} separately.
According to \cite{LG99}, we have
\begin{equation}\label{Dau 1}
    \begin{aligned}
        \mathcal{D}_t^\alpha \biggl(h_\xi E_{\alpha,1}(-\mu_\xi M_{\sigma} t^\alpha)\biggr)=-\mu_\xi M_{\sigma} (h_\xi E_{\alpha,1}(-\mu_\xi M_{\sigma} t^\alpha)).
    \end{aligned}
\end{equation}
Using \cite[(3.8)]{SY11}, we have
\begin{equation}\label{Dau 2}
    \begin{aligned}
      &\mathcal{D}_t^\alpha \biggl(\int_0^t \left[f_\xi(s)+\mu_\xi(M_{\sigma}-\sigma(s))v_\xi(s;\sigma)\right](t-s)^{\alpha-1}E_{\alpha,\alpha}(-\mu_\xi M_{\sigma} (t-s)^\alpha)ds\biggr)\\ 
      &=-\mu_\xi M_{\sigma} \int_0^t \left[f_\xi(s)+\mu_\xi(M_{\sigma}-\sigma(s))v_\xi(s;\sigma)\right](t-s)^{\alpha-1}E_{\alpha,\alpha}(-\mu_\xi M_{\sigma} (t-s)^\alpha)ds\\
      &+f_\xi(t)+\mu_\xi(M_{\sigma}-\sigma(t))v_\xi(t;\sigma).
    \end{aligned}
\end{equation}
Substituting \eqref{Dau 1}, \eqref{Dau 2} into \eqref{Dau 0} and taking into account \eqref{Volt int}, we get
\begin{equation*}
    \begin{aligned}
        &\mathcal{D}_t^\alpha v_\xi(t;\sigma)=-\mu_\xi M_{\sigma}\biggl(h_\xi E_{\alpha,1}(-\mu_\xi M_{\sigma} t^\alpha)\\
        &+\int_0^t \left[f_\xi(s)+\mu_\xi(M_{\sigma}-\sigma(s))v_\xi(s;\sigma)\right](t-s)^{\alpha-1}E_{\alpha,\alpha}(-\mu_\xi M_{\sigma} (t-s)^\alpha)ds\biggr)\\
        &+f_\xi(t)+\mu_\xi(M_{\sigma}-\sigma(t))v_\xi(t;\sigma)\\
        &=-\mu_\xi M_{\sigma} v_\xi(t;\sigma)+f_\xi(t)+\mu_\xi(M_{\sigma}-\sigma(t))v_\xi(t;\sigma)\\
        &=f_\xi(t)-\mu_\xi \sigma(t) v_\xi(t;\sigma),
    \end{aligned}
\end{equation*}
for all $t\in[0,T].$
Since $f_\xi,\,v_\xi(\cdot;\sigma)\in C[0,T],$ we have $\mathcal{D}_t^\alpha v_\xi\in C[0,T].$ Hence, for each $\xi\in\mathcal{I}$, there exists a unique continuous solution $v_\xi(t;\sigma)$ of \eqref{FODE} with $\mathcal{D}_t^\alpha v_\xi\in C[0,T].$ 
Due to the assumption that the system of eigenfunctions $\{\omega_\xi\}_{\xi\in\mathcal{I}}$ are an orthonormal basis in $\mathcal{H}$, we can write $v(t)$ in the form \eqref{EXPANTION u}. With the spectral representation \eqref{EXPANTION u}, the existence and uniqueness of each $v_\xi(t;\sigma)$ lead to the existence and uniqueness of the generalized solution $v(t;\sigma)$ of Problem \ref{P:D}. This completes the proof.
\end{proof}

\subsubsection{Regularity of $v(t;\sigma)$}\label{S:regularity} 
%In this subsection, we show the regularity of $v(t;\sigma).$ To this end, 
For our convenience, we first formulate the following corollary of Lemma \ref{L:E1E2}.

\begin{cor}\label{L:E1E2 ur}
For $0<\alpha<1,$ the Mittag-Leffler type function $E_{\alpha,\alpha}(-\mu_\xi m_{\sigma} t^\alpha)$ satisfies
$$0\leq E_{\alpha,\alpha}(-\mu_\xi m_{\sigma} t^\alpha)\leq E_{\alpha,\alpha}(-\inf_{\xi\in\mathcal{I}}\mu_\xi m_{\sigma} t^\alpha)\leq\frac{1}{\Gamma(\alpha)},\; t>0.$$
\end{cor}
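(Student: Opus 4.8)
The plan is to obtain this statement as an immediate specialization of Lemma \ref{L:E1E2}. That lemma is valid in the range $0<\alpha\le 1$ and $\beta\ge\alpha$, so I would apply it with the choice $\beta=\alpha$; this is admissible because the hypothesis $\beta\ge\alpha$ is allowed to hold with equality, and $0<\alpha<1$ certainly lies in $(0,1]$. The only other point to observe is that the positive factor $m_\sigma$ may be absorbed into the spectrum: setting $\widetilde\mu_\xi:=m_\sigma\mu_\xi$ produces a new sequence that is still positive, still tends to $\infty$ as $|\xi|\to\infty$, and satisfies $\inf_{\xi\in\mathcal I}\widetilde\mu_\xi=m_\sigma\inf_{\xi\in\mathcal I}\mu_\xi$. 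Hence Lemma \ref{L:E1E2}, applied to $\{\widetilde\mu_\xi\}_{\xi\in\mathcal I}$ with $\beta=\alpha$, gives for every $t\ge 0$
$$0\le E_{\alpha,\alpha}(-m_\sigma\mu_\xi t^\alpha)\le E_{\alpha,\alpha}\bigl(-m_\sigma(\inf_{\xi\in\mathcal I}\mu_\xi)\,t^\alpha\bigr)\le\frac{1}{\Gamma(\alpha)},$$
which is precisely the asserted chain of inequalities; restricting to $t>0$ is harmless.

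If one prefers a self-contained argument, the same conclusion can be recovered directly along the lines of the proof of Lemma \ref{L:E1E2}. By Lemma \ref{L:Mon E} with $n=0$ and $n=1$ (again using $\beta=\alpha$, so the condition $\beta\ge\alpha$ holds), the function $z\mapsto E_{\alpha,\alpha}(-z)$ is non-negative and non-increasing on $[0,\infty)$. Therefore, for any $x\ge y\ge 0$,
$$0\le E_{\alpha,\alpha}(-x)\le E_{\alpha,\alpha}(-y)\le E_{\alpha,\alpha}(0)=\frac{1}{\Gamma(\alpha)},$$
and it remains to take $x=m_\sigma\mu_\xi t^\alpha$ and $y=m_\sigma(\inf_{\xi\in\mathcal I}\mu_\xi)t^\alpha$, both non-negative for $t>0$, with $x\ge y$ since $\mu_\xi\ge\inf_{\xi\in\mathcal I}\mu_\xi$ and $m_\sigma,t^\alpha>0$.

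I do not anticipate any genuine obstacle here; the statement is a routine rescaling of an already established lemma. The single step that warrants an explicit word is the admissibility of the boundary case $\beta=\alpha$ in Lemma \ref{L:Mon E} (hence in Lemma \ref{L:E1E2}), since our target function carries the second index equal to $\alpha$ rather than strictly larger; this is fine because the hypotheses of those lemmas read $\beta\ge\alpha$ and not $\beta>\alpha$.
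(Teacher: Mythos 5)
Your argument is correct and matches the paper's treatment: the paper states this result precisely as an immediate corollary of Lemma \ref{L:E1E2} (whose proof rests on the complete monotonicity in Lemma \ref{L:Mon E} with $\beta=\alpha$ allowed), and your rescaling of the spectrum by the positive constant $m_\sigma$, or equivalently your direct monotonicity argument, is exactly that specialization.
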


Now, we establish the property of $v_\xi(t;\sigma)$ that is of significant importance in analyzing the regularity of $v(t;\sigma).$
\begin{lemma}\label{Cor:1}

Suppose that  $v_\xi(t;\sigma)$ is a unique solution of the problem \eqref{FODE} for each $\xi\in\mathcal{I}$. Then following statements hold:

$(a)$ If $f_\xi(t)\leq 0$ on $[0,T]$ and $h_\xi\leq 0,$ then we have $v_\xi(t;\sigma)\leq 0$ on $[0,T],\;\xi\in\mathcal{I}.$

$(b)$ If $f_\xi(t)\geq 0$ on $[0,T]$ and $h_\xi\geq 0,$ then we have $v_\xi(t;\sigma)\geq 0$ on $[0,T],\;\xi\in\mathcal{I}.$
\end{lemma}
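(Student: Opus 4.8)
The plan is to deduce both statements from the weak maximum principle for the Caputo derivative, namely Lemma~\ref{L:max+min}. This lemma is applicable because Theorem~\ref{Th:DEU} guarantees that $v_\xi(\cdot;\sigma)\in X^\alpha[0,T]$, i.e. both $v_\xi(\cdot;\sigma)$ and $\mathcal{D}_t^\alpha v_\xi(\cdot;\sigma)$ are continuous on $[0,T]$, while Lemma~\ref{L:con f} gives $f_\xi\in C[0,T]$; moreover, as already established inside the proof of Theorem~\ref{Th:DEU}, the pointwise identity $\mathcal{D}_t^\alpha v_\xi(t;\sigma)=f_\xi(t)-\mu_\xi\sigma(t)v_\xi(t;\sigma)$ holds for every $t\in[0,T]$.

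For part $(b)$, assume $f_\xi(t)\geq 0$ on $[0,T]$ and $h_\xi\geq 0$, and argue by contradiction. If $v_\xi(t;\sigma)<0$ for some $t\in[0,T]$, then $\min_{t\in[0,T]}v_\xi(t;\sigma)<0\leq h_\xi=v_\xi(0;\sigma)$, so the minimum is attained at an interior point $t_0\in(0,T]$ with $v_\xi(t_0;\sigma)<0$. Lemma~\ref{L:max+min}$(b)$ then yields $\mathcal{D}_t^\alpha v_\xi(t_0;\sigma)\leq 0$. On the other hand, evaluating the equation at $t=t_0$ and using $\mu_\xi>0$, $\sigma(t_0)>0$ (Assumption~\ref{A:D}$(I)$), $f_\xi(t_0)\geq 0$ and $v_\xi(t_0;\sigma)<0$, one obtains $\mathcal{D}_t^\alpha v_\xi(t_0;\sigma)=f_\xi(t_0)-\mu_\xi\sigma(t_0)v_\xi(t_0;\sigma)>0$, a contradiction. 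Hence $v_\xi(t;\sigma)\geq 0$ on $[0,T]$.

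For part $(a)$, the cleanest route is linearity: if $f_\xi\leq 0$ and $h_\xi\leq 0$, then $w_\xi:=-v_\xi(\cdot;\sigma)$ is the unique solution of \eqref{FODE} with data $-f_\xi\geq 0$ and $-h_\xi\geq 0$, so part $(b)$ applied to $w_\xi$ gives $w_\xi\geq 0$, that is $v_\xi(t;\sigma)\leq 0$ on $[0,T]$. Alternatively, one can repeat the contradiction argument verbatim at a positive interior maximum $t_0\in(0,T]$, where Lemma~\ref{L:max+min}$(a)$ forces $\mathcal{D}_t^\alpha v_\xi(t_0;\sigma)\geq 0$ while the equation forces it to be $<0$.

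The argument is short, and no step is a genuine obstacle; the only point demanding a little care is checking that the relevant extremum of $v_\xi(\cdot;\sigma)$ is attained at an \emph{interior} time $t_0\in(0,T]$ rather than at $t=0$ — this is precisely where the strict inequalities $\min v_\xi<0\le h_\xi$ (resp. $\max v_\xi>0\ge h_\xi$) are used — so that the hypotheses of Lemma~\ref{L:max+min} are met.
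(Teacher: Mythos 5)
Your mechanism is exactly the paper's: the $X^\alpha[0,T]$-regularity from Theorem \ref{Th:DEU} makes the Caputo extremum principle (Lemma \ref{L:max+min}) applicable at an extremal time $t_0\in(0,T]$, and the equation evaluated at $t_0$ then forces the sign of $v_\xi(t_0;\sigma)$; your contradiction phrasing and the reduction of $(a)$ to $(b)$ by linearity (the paper goes the other way, deducing $(b)$ from $(a)$ via $-v_\xi$) are only cosmetic differences.

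There is, however, one gap: your contradiction needs the strict inequality $-\mu_\xi\sigma(t_0)v_\xi(t_0;\sigma)>0$, hence $\mu_\xi>0$. In this section $\mathcal{M}$ is only assumed to be a positive operator with discrete spectrum; the condition $\inf_{\xi\in\mathcal{I}}\mu_\xi>0$ is imposed only later, for the inverse problem, and the paper's own proof explicitly treats an index $\eta$ with $\mu_\eta=0$. For such an index your argument collapses: at the minimizer the equation gives only $\mathcal{D}_t^\alpha v_\eta(t_0;\sigma)=f_\eta(t_0)\geq 0$, which is perfectly compatible with the inequality $\mathcal{D}_t^\alpha v_\eta(t_0;\sigma)\leq 0$ coming from Lemma \ref{L:max+min}, so no contradiction arises and the sign of $v_\eta$ is not controlled. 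The paper closes this case directly: for $\mu_\eta=0$ equation \eqref{FODE} reduces to $\mathcal{D}_t^\alpha v_\eta(t;\sigma)=f_\eta(t)$ with $v_\eta(0;\sigma)=h_\eta$, and applying $I_t^\alpha$ together with Lemma \ref{I0D} yields $v_\eta(t;\sigma)=h_\eta+I_t^\alpha f_\eta(t)$, whose sign is immediate from the signs of $h_\eta$ and $f_\eta$. Adding this case (or justifying at the outset that all $\mu_\xi$ are strictly positive, which the standing assumptions of this section do not provide) would complete your proof.
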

\begin{proof}

Let $\xi\in\mathcal{I}$ be such that $\mu_\xi>0.$ 
For each $\sigma\in \mathcal{C}^+[0,T]$ and such $\xi\in\mathcal{I},$ Theorem \ref{Th:DEU} gives $v_\xi(\cdot;\sigma)\in C[0,T],$ which leads to the existence of $t_0\in [0,T]$ such that $v_\xi(t_0;\sigma)=\max_{t\in[0,T]}v_\xi(t;\sigma).$

If $t_0=0,$ then we have $v_\xi(t;\sigma)\leq v_\xi(0;\sigma)=h_\xi\leq 0.$ On other hand, if $t_0\in(0,T],$ using Lemma \ref{L:max+min}, we can deduce that $\mathcal{D}_t^\alpha v_\xi(t_0;\sigma)\geq 0,$ which implies $\mu_\xi \sigma(t_0)v_\xi(t_0;\sigma)=-\mathcal{D}_t^\alpha v_\xi(t_0;\sigma)+f_\xi(t_0)\leq 0.$ Since $\sigma>0$ on $[0,T],$ it follows $v_\xi(t_0;\sigma)\leq 0.$ Therefore, based on the definition of $t_0,$ we can conclude that $v_\xi(t;\sigma)\leq 0.$

Let $\eta\in\mathcal{I}$ be such that $\mu_\eta=0.$ By \eqref{FODE}, for each such $\eta\in\mathcal{I}$ and for $t\in[0,T]$ we have \begin{equation}\label{11}\mathcal{D}_t^\alpha v_\eta(t;\sigma)=f_\eta(t),\;v_\eta(0;\sigma)=h_\eta.\end{equation}
Applying $I_t^\alpha$ to \eqref{11} and taking into account Lemma \ref{I0D} we have the solution of \eqref{11} in the following form
$$v_\eta(t;\sigma)=h_\eta+I_t^\alpha f_\eta,$$ which yields $v_\eta(t;\sigma)\leq 0.$ 

For the case of $(b),$ let us consider $\overline{v}_\xi(t;\sigma)=-v_\xi(t;\sigma),\;\forall\xi\in\mathcal{I},$ then repeating the above proving procedure for $\overline{v}_\xi(t;\sigma),$ we show $\overline{v}_\xi(t;\sigma)\leq 0,$ for all $\xi\in\mathcal{I}$ that is, $v_\xi(t;\sigma)\geq 0.$
\end{proof}

Let us denote by $v^h(t;\sigma)$ the solution of the
homogeneous equation \eqref{EQ:Frac Pseudo} with non-zero initial condition \eqref{CON:IN}, and by $v^f(t;\sigma)$ the solution of the inhomogeneous equation \eqref{EQ:Frac Pseudo} with zero initial condition. Then in view of Theorem \ref{Th:DEU} the generalized solution $v(t;\sigma)$ of Problem \ref{P:D} can be written as $v(t;\sigma)=v^f(t;\sigma)+v^h(t;\sigma).$ This together with \eqref{EXPANTION u} imply that $v_\xi(t;\sigma)=v_\xi^f(t;\sigma)+v_\xi^h(t;\sigma),\;\xi\in\mathcal{I},$ where
$v_\xi^f(t;\sigma)$ and $v_\xi^h(t;\sigma)$ satisfy the following fractional equations
\begin{equation}\label{FODE ur}
    \mathcal{D}_t^\alpha v_\xi^f(t;\sigma)+\mu_\xi \sigma(t)v_\xi^f(t;\sigma)=f_\xi(t),\;v_\xi^f(0;\sigma)=0,\;\xi\in\mathcal{I};
\end{equation}
\begin{equation}\label{FODE ui}
    \mathcal{D}_t^\alpha v_\xi^h(t;\sigma)+\mu_\xi \sigma(t)v_\xi^h(t;\sigma)=0,\;v_\xi^h(0;\sigma)=h_\xi,\;\xi\in\mathcal{I}.
\end{equation}

Now we are going to prove the regularity of $v^{f}(t;\sigma).$ For this,
let us denote
\begin{equation}\label{f+-}
f_\xi^{max}(t)=\max\{0,f_\xi(t)\} \;\text{and}\;f_\xi^{min}(t)=\min\{0,f_\xi(t)\},\; {t}\in[0,T] \;\xi\in\mathcal{I}.
\end{equation}
It is clear that $f_\xi=f_\xi^{max}+f_\xi^{min}.$  

\begin{lemma} Let Assumption \ref{A:D} $(b)$ be satisfied. Then for each $\xi\in \mathcal{I}$ the coefficient functions $f_\xi^{max}(t),f_\xi^{min}(t)$ belong to $C[0,T].$  
\end{lemma}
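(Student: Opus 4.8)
The plan is to mimic the proof of Lemma \ref{L:con f}, replacing the functional $|a - b|$ by $|\max\{0,a\}-\max\{0,b\}|$ (resp.\ $|\min\{0,a\}-\min\{0,b\}|$) and exploiting that these maps are $1$-Lipschitz on $\mathbb{R}$. First I would recall that for any real numbers $a,b$ one has the elementary inequalities
$$
|\max\{0,a\}-\max\{0,b\}|\leq |a-b| \quad\text{and}\quad |\min\{0,a\}-\min\{0,b\}|\leq |a-b|,
$$
which follow since $x\mapsto\max\{0,x\}=\tfrac{1}{2}(x+|x|)$ and $x\mapsto\min\{0,x\}=\tfrac{1}{2}(x-|x|)$ are each Lipschitz with constant $1$.

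Next, fix $\xi\in\mathcal{I}$ and $\varepsilon>0$. By Lemma \ref{L:con f}, $f_\xi\in C[0,T]$, so there is $\delta=\delta(\varepsilon,\xi)>0$ such that $|f_\xi(t_1)-f_\xi(t_2)|<\varepsilon$ whenever $t_1,t_2\in[0,T]$ with $|t_1-t_2|<\delta$. (Alternatively one can invoke Assumption \ref{A:D} $(II)$ directly together with the Bessel inequality exactly as in the proof of Lemma \ref{L:con f}, obtaining a $\delta$ uniform in $\xi$.) Applying the Lipschitz bound above with $a=f_\xi(t_1)$, $b=f_\xi(t_2)$ then gives
$$
|f_\xi^{max}(t_1)-f_\xi^{max}(t_2)|\leq |f_\xi(t_1)-f_\xi(t_2)|<\varepsilon,
$$
and likewise $|f_\xi^{min}(t_1)-f_\xi^{min}(t_2)|<\varepsilon$, for all $t_1,t_2\in[0,T]$ with $|t_1-t_2|<\delta$. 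Since $\varepsilon>0$ was arbitrary, both $f_\xi^{max}$ and $f_\xi^{min}$ are (uniformly) continuous on $[0,T]$, i.e.\ they belong to $C[0,T]$, which is the claim.

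There is no real obstacle here — the only thing to be careful about is not confusing the hypothesis label: the statement cites ``Assumption \ref{A:D} $(b)$'' but Assumption \ref{A:D} is organized as $(I)$, $(II)$, $(III)$, and the relevant hypothesis is $(II)$, namely $f\in C([0,T];\mathcal{H})$; I would phrase the proof in terms of $(II)$. The mild subtlety worth a sentence is the passage from continuity of the $\mathcal{H}$-valued map $f$ to continuity of the scalar component $f_\xi$, which is precisely Lemma \ref{L:con f} and can simply be quoted. Everything else is the one-line Lipschitz estimate for $\max\{0,\cdot\}$ and $\min\{0,\cdot\}$.
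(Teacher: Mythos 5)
Your proposal is correct and follows essentially the same route as the paper: both write $f_\xi^{max}=\tfrac{1}{2}(f_\xi+|f_\xi|)$, $f_\xi^{min}=\tfrac{1}{2}(f_\xi-|f_\xi|)$, derive the $1$-Lipschitz bound $|f_\xi^{max}(t_1)-f_\xi^{max}(t_2)|\leq|f_\xi(t_1)-f_\xi(t_2)|$ (and likewise for $f_\xi^{min}$), and conclude via Lemma \ref{L:con f}. Your remark about the hypothesis label is also apt: the paper's ``Assumption \ref{A:D} $(b)$'' is indeed a slip for Assumption \ref{A:D} $(II)$.
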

\begin{proof} In view of \eqref{f+-}, we can write $f_\xi^{max}(t)$ and $f_\xi^{min}(t)$ in the following forms for each $\xi\in \mathcal{I}$ and for all $t\in[0,T]:$
\begin{equation}\label{f+-1}f_\xi^{max}(t)=\frac{f_\xi(t)+|f_\xi(t)|}{2},\;f_\xi^{min}(t)=\frac{f_\xi(t)-|f_\xi(t)|}{2}.\end{equation} 
Using \eqref{f+-1} we have the following inequality for all $t_1,\,t_2\in[0,T]:$ 
%Using the definitions of $f_\xi^{max}(t)$ and $f_\xi^{min}(t)$ we have
\begin{equation*}\label{f+-f+}\begin{split}
 |f_\xi^{max}(t_1)-f_\xi^{max}(t_2)|&\leq\left|\frac{f_\xi(t_1)+|f_\xi(t_1)|}{2}-\frac{f_\xi(t_2)+|f_\xi(t_2)|}{2}\right|\\
  &\leq \frac{1}{2}\big|f_\xi(t_1)-f_\xi(t_2)\big|+\frac{1}{2}\big||f_\xi(t_1)|-|f_\xi(t_2)|\big|\\
  &\leq|f_\xi(t_1)-f_\xi(t_2)|.
\end{split}
\end{equation*}
This implies with Lemma \ref{L:con f} that $f_\xi^{max}\in C[0,T].$ Similarly we can prove that $f_\xi^{min}\in C[0,T].$
\end{proof}
\begin{lemma}\label{q_a>a ur}
 Let Assumption \ref{A:D} $(I)$ hold true. Then we have
 $$|v^f_\xi(t;\sigma)|\leq \int_0^t |f_\xi(s)|(t-s)^{\alpha-1} E_{\alpha,\alpha}(-\mu_\xi m_{\sigma} (t-s)^\alpha)ds\;\text{on}\; \;[0,T],\;\xi\in\mathcal{I}.$$ 
\end{lemma}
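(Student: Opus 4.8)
The plan is to estimate $v_\xi^f(t;\sigma)$ by comparing it with the solutions of two auxiliary \emph{constant}-coefficient fractional Cauchy problems, obtained from \eqref{FODE ur} by replacing $\sigma(t)$ with its lower bound $m_\sigma$. The comparison will rest on the sign-preserving property of Lemma \ref{Cor:1} together with the maximum principle of Lemma \ref{L:max+min}; the point of passing to $m_\sigma$ is that, since $E_{\alpha,\alpha}(-z)$ is non-increasing, the constant-coefficient solution with $m_\sigma$ dominates the variable-coefficient one.

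First I would split the source as $f_\xi=f_\xi^{max}+f_\xi^{min}$ (see \eqref{f+-}). By linearity of \eqref{FODE ur} and the uniqueness part of Theorem \ref{Th:DEU}, write $v_\xi^f=w^+_\xi+w^-_\xi$, where $w^+_\xi\in X^\alpha[0,T]$ solves $\mathcal{D}_t^\alpha w^+_\xi+\mu_\xi\sigma(t)w^+_\xi=f_\xi^{max}$, $w^+_\xi(0)=0$, and $w^-_\xi$ solves the analogous problem with source $f_\xi^{min}$ (both sources are in $C[0,T]$ by the lemma preceding the statement). Lemma \ref{Cor:1} gives $w^+_\xi\geq0$ and $w^-_\xi\leq0$ on $[0,T]$, so $|v_\xi^f|\leq w^+_\xi+(-w^-_\xi)$. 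Next I would introduce the constant-coefficient comparison functions $\widetilde w^+_\xi$ and $\widetilde p_\xi$, solving $\mathcal{D}_t^\alpha \widetilde w^+_\xi+\mu_\xi m_\sigma \widetilde w^+_\xi=f_\xi^{max}$, $\widetilde w^+_\xi(0)=0$, and $\mathcal{D}_t^\alpha \widetilde p_\xi+\mu_\xi m_\sigma \widetilde p_\xi=|f_\xi^{min}|$, $\widetilde p_\xi(0)=0$; by the Duhamel representation already used in \eqref{Volt int} (now with the constant $m_\sigma$ in place of $M_\sigma$),
\[
\widetilde w^+_\xi(t)=\int_0^t f_\xi^{max}(s)(t-s)^{\alpha-1}E_{\alpha,\alpha}(-\mu_\xi m_\sigma(t-s)^\alpha)\,ds,
\]
and similarly for $\widetilde p_\xi$ with $|f_\xi^{min}|$ in place of $f_\xi^{max}$. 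Both are $\geq0$ because the kernel is nonnegative by Corollary \ref{L:E1E2 ur}.

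The core step is the pair of comparisons $0\le w^+_\xi\le \widetilde w^+_\xi$ and $0\le -w^-_\xi\le \widetilde p_\xi$ on $[0,T]$. For the first, set $z_\xi:=\widetilde w^+_\xi-w^+_\xi$, which lies in $X^\alpha[0,T]$ (difference of two elements of $X^\alpha[0,T]$), so Lemma \ref{L:max+min} is applicable to it; a direct subtraction gives $\mathcal{D}_t^\alpha z_\xi+\mu_\xi m_\sigma z_\xi=\mu_\xi(\sigma(t)-m_\sigma)w^+_\xi\geq0$ (using $\sigma\ge m_\sigma$ and $w^+_\xi\ge0$) and $z_\xi(0)=0$. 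Now argue exactly as in Lemma \ref{Cor:1}: if $z_\xi$ attains its minimum over $[0,T]$ at $t_0=0$, then $z_\xi\ge z_\xi(0)=0$; if $t_0\in(0,T]$ and $\mu_\xi>0$, Lemma \ref{L:max+min}(b) yields $\mathcal{D}_t^\alpha z_\xi(t_0)\le0$, whence $\mu_\xi m_\sigma z_\xi(t_0)=-\mathcal{D}_t^\alpha z_\xi(t_0)+\mu_\xi(\sigma(t_0)-m_\sigma)w^+_\xi(t_0)\ge0$ and thus $z_\xi(t_0)\ge0$; the case $\mu_\xi=0$ is handled by applying $I_t^\alpha$ as in Lemma \ref{Cor:1}. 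The second comparison follows by applying the same argument to $p_\xi:=-w^-_\xi$, which solves $\mathcal{D}_t^\alpha p_\xi+\mu_\xi\sigma(t)p_\xi=|f_\xi^{min}|\ge0$ with $p_\xi(0)=0$, and to $\zeta_\xi:=\widetilde p_\xi-p_\xi$.

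Finally I would combine: $|v_\xi^f(t;\sigma)|\le w^+_\xi(t)+p_\xi(t)\le \widetilde w^+_\xi(t)+\widetilde p_\xi(t)$, and since $f_\xi^{max}(s)+|f_\xi^{min}(s)|=|f_\xi(s)|$ pointwise, the two integrals add up to $\int_0^t|f_\xi(s)|(t-s)^{\alpha-1}E_{\alpha,\alpha}(-\mu_\xi m_\sigma(t-s)^\alpha)\,ds$, which is the asserted bound. The main obstacle is precisely the comparison step: one needs the smaller coefficient $m_\sigma$ (rather than the $M_\sigma$ that appears in the fixed-point representation \eqref{Volt int}) inside the Mittag-Leffler kernel, which forces us to pass through the differences $z_\xi,\zeta_\xi$, verify they belong to $X^\alpha[0,T]$ so that Lemma \ref{L:max+min} applies, and re-run the maximum-principle argument; the nonnegativity of the kernel from Corollary \ref{L:E1E2 ur} is what keeps $\widetilde w^+_\xi,\widetilde p_\xi\ge0$ and the whole chain of inequalities consistent.
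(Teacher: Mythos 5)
Your proposal is correct and follows essentially the same route as the paper: split $f_\xi$ into $f_\xi^{max}+f_\xi^{min}$, use Lemma \ref{Cor:1} for the signs of the corresponding solutions, compare with the constant-coefficient problems with $m_\sigma$ via the difference equation whose right-hand side $\mu_\xi(\sigma(t)-m_\sigma)v_\xi^{f,max}(t;\sigma)$ is nonnegative, and conclude from the explicit Mittag-Leffler representation and the nonnegativity of the kernel. The only difference is cosmetic: you spell out the maximum-principle argument for the differences $z_\xi,\zeta_\xi$, whereas the paper compresses this step into a direct appeal to Lemma \ref{Cor:1} applied to the same difference equations.
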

\begin{proof}
In view of \eqref{FODE ur} for each $\xi\in\mathcal{I},$ $v^f(t;m_{\sigma})$ and $v^f(t;\sigma)$ satisfy the following system of fractional differential equations:
$$
\begin{cases}
&\mathcal{D}_t^\alpha v_\xi^f(t;m_{\sigma})+\mu_\xi m_{\sigma} v_\xi^f(t;m_{\sigma})=f_\xi(t);\\
&\mathcal{D}_t^\alpha v_\xi^f(t;\sigma)+\mu_\xi \sigma(t)v_\xi^f(t;\sigma)=f_\xi(t);\\
     &v_\xi^f(0;m_{\sigma})=v_\xi^f(0;\sigma)=0.
\end{cases}
$$
Then the difference $v_\xi^f(t;m_{\sigma})-v_\xi^f(t;\sigma)$ satisfies
$$\mathcal{D}_t^\alpha (v_\xi^f(t;m_{\sigma})-v_\xi^f(t;\sigma))+\mu_\xi m_{\sigma} (v_\xi^f(t;m_{\sigma})-v_\xi^f(t;\sigma))=\mu_\xi v_\xi^f(t;\sigma)(\sigma(t)-m_{\sigma}),$$
$$\;v_\xi^f(0;m_{\sigma})-v_\xi^f(0;\sigma)=0,$$
for each $\xi\in \mathcal{I}.$ Assumption \ref{A:D} $(I)$ together with Lemma \ref{Cor:1} give us for each $\xi\in\mathcal{I}:$
\begin{equation}\label{ura2<ura1}\begin{cases}
     &  0\leq v_\xi^{f,max}(t;\sigma)\leq v_\xi^{f,max}(t;m_{\sigma}),\;\text{on}\;[0,T];\\
     & v_\xi^{f,min}(t;m_{\sigma})\leq v_\xi^{f,min}(t;\sigma)\leq 0,\;\text{on}\;[0,T],
\end{cases}\end{equation}
where $v_\xi^{max}(t;\sigma),\; v_\xi^{f,max}(t;m_{\sigma}),$ $v_\xi^{min}(t;\sigma)$ and $v_\xi^{f,min}(t;m_{\sigma})$
satisfy 
\begin{equation*}
    \mathcal{D}_t^\alpha v_\xi^{f,max}(t;\sigma)+\lambda_\xi \sigma(t)v_\xi^{f,max}(t;\sigma)=f_\xi^{max}(t),\;v_\xi^{f,max}(0;\sigma)=0,\;\xi\in\mathcal{I};
\end{equation*}
\begin{equation*}
    \mathcal{D}_t^\alpha v_\xi^{f,max}(t;m_\sigma)+\lambda_\xi m_\sigma v_\xi^{f,max}(t;m_\sigma)=f_\xi^{max}(t),\;v_\xi^{f,max}(0;m_\sigma)=0,\;\xi\in\mathcal{I};
\end{equation*}
\begin{equation*}
    \mathcal{D}_t^\alpha v_\xi^{f,min}(t;\sigma)+\lambda_\xi \sigma(t)v_\xi^{f,min}(t;\sigma)=f_\xi^{min}(t),\;v_\xi^{f,min}(0;\sigma)=0,\;\xi\in\mathcal{I},
\end{equation*}
and
\begin{equation*}
    \mathcal{D}_t^\alpha v_\xi^{f,min}(t;m_\sigma)+\lambda_\xi m_\sigma v_\xi^{f,min}(t;m_\sigma)=f_\xi^{min}(t),\;v_\xi^{f,min}(0;m_\sigma)=0,\;\xi\in\mathcal{I},
\end{equation*}
respectively. According to \cite{LG99}, the solutions $v_\xi^f(t;m_\sigma),\,v_\xi^{f,max}(t;m_\sigma)$ and $v_\xi^{f,min}(t;m_\sigma)$ can be presented for each $\xi\in\mathcal{I}$ by the following formulas
\begin{equation}\label{uf} v_\xi^f(t;m_{\sigma})=\int_0^t f_\xi(s)(t-s)^{\alpha-1} E_{\alpha,\alpha}(-\mu_\xi m_{\sigma} (t-s)^\alpha)ds,\;\forall t\in[0,T],\end{equation}
\begin{equation}\label{ufmax} v_\xi^{f,max}(t;m_{\sigma})=\int_0^t f_\xi^{max}(s)(t-s)^{\alpha-1} E_{\alpha,\alpha}(-\mu_\xi m_{\sigma} (t-s)^\alpha)ds,\;\forall t\in[0,T],\end{equation}
and
\begin{equation}\label{ufmin} v_\xi^{f,min}(t;m_{\sigma})=\int_0^t f_\xi^{min}(s)(t-s)^{\alpha-1} E_{\alpha,\alpha}(-\mu_\xi m_{\sigma} (t-s)^\alpha)ds,\;\forall t\in[0,T].\end{equation}
In view of Corollary \ref{L:E1E2 ur}, we know that $E_{\alpha,\alpha}(-\mu_\xi m_{\sigma} t^\alpha)\geq 0, \;\forall t>0.$ Using this and taking into  account \eqref{ura2<ura1}, \eqref{ufmax} and \eqref{ufmin}, we deduce for each $\xi\in\mathcal{I}:$
$$|v^{f,max}_\xi(t;\sigma)|\leq \int_0^t f_\xi^{max}(s)(t-s)^{\alpha-1} E_{\alpha,\alpha}(-\mu_\xi m_{\sigma} (t-s)^\alpha)ds,\;\forall t\in[0,T],$$
and 
$$|v^{f,min}_\xi(t;\sigma)|\leq \int_0^t |f_\xi^{min}(s)|(t-s)^{\alpha-1} E_{\alpha,\alpha}(-\mu_\xi m_{\sigma} (t-s)^\alpha)ds,\;\forall t\in[0,T].$$
These together with $v_\xi^f(t;\sigma)=v_\xi^{f,max}(t;\sigma)+v_\xi^{f,min}(t;\sigma)$ gives us
 \begin{equation*}
        \begin{split}
            |v^f_\xi(t;\sigma)|&\leq |v^{f,max}_\xi(t;\sigma)|+|u^{f,\min}_\xi(t;\sigma)|\\
            &\leq\int_0^t (f_\xi^{max}(s)+|f_\xi^{min}(s)|)(t-s)^{\alpha-1} E_{\alpha,\alpha}(-\lambda_\xi m_{\sigma} (t-s)^\alpha)ds\\
            &=\int_0^t (f_\xi^{max}(s)-f_\xi^{min}(s))(t-s)^{\alpha-1} E_{\alpha,\alpha}(-\lambda_\xi m_{\sigma} (t-s)^\alpha)ds\\
            &=\int_0^t |f_\xi(s)|(t-s)^{\alpha-1} E_{\alpha,\alpha}(-\lambda_\xi m_{\sigma} (t-s)^\alpha)ds,
        \end{split}
    \end{equation*}
where the last equality is obtained by using \eqref{f+-1}.
This completes the proof.
\end{proof}

Now, we are ready to present the main lemma about regularity of $v^f(t;\sigma).$

\begin{lemma}\label{Cor:regularity ur}
Let Assumption \ref{A:D} $(I)$ hold.

$i)$ If $f\in C^\alpha([0,T];\mathcal{H}),$ then
$$\|v^f\|_{C([0,T];\mathcal{H}^1)}+\|\mathcal{D}_t^\alpha v^f\|_{C([0,T];\mathcal{H})}\leq C (T^\alpha+1)\|f\|_{C^\alpha([0,T];\mathcal{H})}.$$

$ii)$ If $f\in C([0,T];\mathcal{H}^\frac{1}{2}),$ then
$$\|v^f\|_{C([0,T];\mathcal{H}^1)}+\|\mathcal{D}_t^\alpha v^f\|_{C([0,T];\mathcal{H})}\leq C(T^{\alpha}+T^\frac{\alpha}{2}+1)\|f\|_{C([0,T];\mathcal{H}^\frac{1}{2})}.$$

$iii)$ If $f\in C([0,T];\mathcal{H}^\frac{1}{2})$ and $\inf_{\xi\in\mathcal{I}}\mu_\xi>0,$ then
$$\|v^f\|_{C([0,T];\mathcal{H}^1)}+\|\mathcal{D}_t^\alpha v^f\|_{C([0,T];\mathcal{H})}\leq C (\frac{1}{\sqrt{\inf_{\xi\in\mathcal{I}}\mu_\xi}}+1)\|f\|_{C([0,T];\mathcal{H}^\frac{1}{2})},$$
where $C>0$ does not depend on $T.$
\end{lemma}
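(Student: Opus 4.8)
The strategy is to obtain, for each mode $\xi$, a pointwise bound on $|\mu_\xi v_\xi^f(t;\sigma)|$ and on $|\mathcal{D}_t^\alpha v_\xi^f(t;\sigma)|$ starting from the key estimate of Lemma \ref{q_a>a ur}, namely
$$
|v_\xi^f(t;\sigma)|\leq \int_0^t |f_\xi(s)|(t-s)^{\alpha-1}E_{\alpha,\alpha}(-\mu_\xi m_\sigma (t-s)^\alpha)\,ds,
$$
and then to square, sum over $\xi\in\mathcal{I}$, and take the supremum over $t\in[0,T]$, using Parseval. For the $\mathcal{H}^1$-norm one multiplies by $\mu_\xi$ and notes $\mu_\xi (t-s)^{\alpha-1}E_{\alpha,\alpha}(-\mu_\xi m_\sigma(t-s)^\alpha)$ is, up to the constant $1/m_\sigma$, the negative $s$-derivative of $E_{\alpha,1}(-\mu_\xi m_\sigma(t-s)^\alpha)$ by Lemma \ref{L:Der E}, so the kernel integrates to at most $1/m_\sigma$; alternatively one keeps a factor $\mu_\xi^{1/2}$ on the kernel and a factor $\mu_\xi^{1/2}$ on $f_\xi$ when $f\in C([0,T];\mathcal{H}^{1/2})$. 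For $\mathcal{D}_t^\alpha v_\xi^f$ one uses the equation itself, $\mathcal{D}_t^\alpha v_\xi^f = f_\xi - \mu_\xi\sigma(t)v_\xi^f$, so the bound on $\mathcal{D}_t^\alpha v^f$ reduces to the bound on $f$ in $\mathcal{H}$ plus $M_\sigma$ times the bound on $\mu_\xi v_\xi^f$ already obtained.

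The three cases differ only in how one estimates $\int_0^t |f_\xi(s)|\,\mu_\xi^{1/2}(t-s)^{\alpha-1}E_{\alpha,\alpha}(-\mu_\xi m_\sigma(t-s)^\alpha)\,ds$. In case $(ii)$, $f\in C([0,T];\mathcal{H}^{1/2})$, I would split $\mu_\xi = \mu_\xi^{1/2}\cdot\mu_\xi^{1/2}$, attach one half-power to $f_\xi$ (giving $\|f\|_{C([0,T];\mathcal{H}^{1/2})}$ after Parseval) and leave $\mu_\xi^{1/2}(t-s)^{\alpha-1}E_{\alpha,\alpha}(-\mu_\xi m_\sigma(t-s)^\alpha)$ in the kernel; by Lemma \ref{L:converge E} (with $\beta=\alpha$) this is bounded by $C\mu_\xi^{1/2}(t-s)^{\alpha-1}/(1+\mu_\xi m_\sigma(t-s)^\alpha)\leq C'(t-s)^{\alpha/2-1}$ since $\mu_\xi^{1/2}/(1+\mu_\xi m_\sigma(t-s)^\alpha)\le C(t-s)^{-\alpha/2}$, and $\int_0^t(t-s)^{\alpha/2-1}ds = \tfrac{2}{\alpha}t^{\alpha/2}$, producing the $T^{\alpha/2}$ term; a parallel computation keeping the full factor $\mu_\xi$ against $E_{\alpha,\alpha}\le 1/\Gamma(\alpha)$ and integrating $(t-s)^{\alpha-1}$ gives the $T^\alpha$ term, and the remaining $+1$ absorbs the $C([0,T];\mathcal{H})$ part in $\mathcal{D}_t^\alpha v^f$. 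In case $(iii)$ one additionally has $\inf_\xi\mu_\xi>0$, so $\mu_\xi^{1/2}(t-s)^{\alpha-1}E_{\alpha,\alpha}(-\mu_\xi m_\sigma(t-s)^\alpha)\le C\mu_\xi^{-1/2}(t-s)^{-1}\cdot(t-s)^\alpha/m_\sigma$ is too crude; instead use $\mu_\xi (t-s)^{\alpha-1}E_{\alpha,\alpha}(-\mu_\xi m_\sigma(t-s)^\alpha) = -\tfrac{1}{m_\sigma}\tfrac{d}{ds}E_{\alpha,1}(-\mu_\xi m_\sigma(t-s)^\alpha)$, so that with a factor $\mu_\xi^{-1/2}\le (\inf_\xi\mu_\xi)^{-1/2}$ pulled out, $\int_0^t$ of the $s$-derivative telescopes to $\le 1/m_\sigma$, yielding the $T$-independent constant $(\inf_\xi\mu_\xi)^{-1/2}$; the $+1$ again handles the $\mathcal{H}$-bound on $\mathcal{D}_t^\alpha v^f$. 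In case $(i)$, $f\in C^\alpha([0,T];\mathcal{H})$, one cannot pull half a power of $\mu_\xi$ onto $f_\xi$, so instead subtract and add $f_\xi(t)$: write $f_\xi(s) = f_\xi(t) + (f_\xi(s)-f_\xi(t))$; the term with $f_\xi(t)$ gives $|f_\xi(t)|\cdot\mu_\xi\int_0^t(t-s)^{\alpha-1}E_{\alpha,\alpha}(\cdots)ds\le |f_\xi(t)|/m_\sigma$ (again by Lemma \ref{L:Der E}), hence contributes $\|f\|_{C([0,T];\mathcal{H})}/m_\sigma$; the Hölder term is bounded by $[f]_{C^\alpha}\int_0^t(t-s)^{\alpha}\mu_\xi(t-s)^{\alpha-1}E_{\alpha,\alpha}(\cdots)ds \le [f]_{C^\alpha}\cdot C\int_0^t (t-s)^{\alpha-1}ds = C[f]_{C^\alpha}t^\alpha/\alpha$ using $\mu_\xi(t-s)^\alpha E_{\alpha,\alpha}(-\mu_\xi m_\sigma(t-s)^\alpha)\le C$ from Lemma \ref{L:converge E}, giving the $T^\alpha$ term; after squaring and summing over $\xi$ (here the mode-independent bounds let the $\ell^2$-sum of $|f_\xi(t)|^2$ collapse to $\|f(t)\|_\mathcal{H}^2$) one obtains $(i)$.

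Finally I assemble the pieces: in every case the bound on $\|v^f(t;\sigma)\|_{\mathcal{H}^1}^2 = \sum_\xi (1+\mu_\xi)^2|v_\xi^f|^2 \le 2\sum_\xi|v_\xi^f|^2 + 2\sum_\xi\mu_\xi^2|v_\xi^f|^2$ is controlled by the $\mathcal{M}^1$-bound just derived plus the (weaker, same-structure) $\mathcal{H}$-bound, and $\|\mathcal{D}_t^\alpha v^f(t;\sigma)\|_\mathcal{H}\le \|f(t)\|_\mathcal{H} + M_\sigma\|v^f(t;\sigma)\|_{\mathcal{H}^1}$ from the equation, which is why the factor $T^\alpha+1$ (resp.\ $T^\alpha+T^{\alpha/2}+1$, resp.\ $(\inf_\xi\mu_\xi)^{-1/2}+1$) appears with the "$+1$" accounting for the raw $\|f\|$ term. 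The constant $C$ depends only on $\alpha$, $m_\sigma$, $M_\sigma$ (through the universal Mittag-Leffler constants of Lemmas \ref{L:converge E}, \ref{L:E1E2}, \ref{L:Der E}) and not on $T$, as claimed. The main obstacle is the kernel estimate in case $(ii)$: one must check carefully that the weakly singular factors combine to give an \emph{integrable} power $(t-s)^{\alpha/2-1}$ uniformly in $\xi$, i.e.\ that $\sup_{\mu>0}\mu^{1/2}/(1+\mu m_\sigma r^\alpha)\le C r^{-\alpha/2}$, and similarly that $\sup_{\mu>0}\mu r^\alpha/(1+\mu m_\sigma r^\alpha)\le C$ for the Hölder term in $(i)$ — these elementary but essential optimizations over the spectral parameter are where the precise powers of $T$ are born, and they must be done before Parseval so that the resulting $s$-kernel is mode-independent.
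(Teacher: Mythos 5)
Your overall architecture is the same as the paper's: start from the modewise bound of Lemma \ref{q_a>a ur}, treat the three cases by different handlings of the extra factor $\mu_\xi$ (splitting $f_\xi(s)=f_\xi(t)+(f_\xi(s)-f_\xi(t))$ for $(i)$, transferring a half power of $\mu_\xi$ onto $f_\xi$ for $(ii)$, using Lemma \ref{L:Der E} and the bottom eigenvalue for $(iii)$), and recover $\mathcal{D}_t^\alpha v^f$ from the equation, with $T$-independence of the constant coming from the uniform Mittag--Leffler bounds. However, there is a genuine gap in how you execute the sum over $\xi$ in case $(i)$ (and the same danger lurks in your sketch of $(iii)$). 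You bound the H\"older-difference term modewise by $C[f]_{C^\alpha}\,t^\alpha/\alpha$, a bound that is \emph{independent of $\xi$}, and then propose to ``square and sum over $\xi$''; this diverges, since $\mathcal{I}$ is infinite. Replacing $[f]_{C^\alpha}$ by the modewise seminorm $[f_\xi]_{C^\alpha}$ does not repair it either, because $\sum_\xi [f_\xi]_{C^\alpha}^2$ is \emph{not} controlled by $[f]_{C^\alpha([0,T];\mathcal{H})}^2$ (sum of sups versus sup of sums; with variations of different modes localized at disjoint times the former can be infinite while the latter is finite). The same interchange problem appears in your $(iii)$ if the factor $|f_\xi(s)|$ is removed from the integral by a modewise supremum before summing: $\sum_\xi\mu_\xi\sup_s|f_\xi(s)|^2$ is not bounded by $\|f\|^2_{C([0,T];\mathcal{H}^{1/2})}$.

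The fix is exactly the ordering the paper uses, and it is compatible with your own closing remark that the spectral optimization must precede Parseval: after making the kernel mode-independent, apply Cauchy--Schwarz (or H\"older with the Mittag--Leffler kernel as weight) in the \emph{time} integral first, keep the $\xi$-dependent quantity $|f_\xi(\tau)-f_\xi(t)|^2$ (resp.\ $\mu_\xi|f_\xi(\tau)|^2$) inside, interchange $\sum_\xi$ with $\int_0^t\,d\tau$, and only then bound $\sum_\xi|f_\xi(\tau)-f_\xi(t)|^2=\|f(\tau)-f(t)\|_\mathcal{H}^2\le [f]^2_{C^\alpha}|t-\tau|^{2\alpha}$ (resp.\ $\sum_\xi\mu_\xi|f_\xi(\tau)|^2\le\|f\|^2_{C([0,T];\mathcal{H}^{1/2})}$), finishing with the elementary integral $\int_0^t(t-\tau)^{\alpha-1}d\tau$ or, in case $(iii)$, with the monotonicity of $E_{\alpha,\alpha}(-z)$ (Corollary \ref{L:E1E2 ur}) to replace $\mu_\xi$ by $\inf_{\xi}\mu_\xi$ in the kernel and then \eqref{INT E inf} to obtain the $T$-independent factor $1/\inf_{\xi}\mu_\xi$. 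With this reordering your case $(ii)$ computation (where the kernel $(t-s)^{\alpha/2-1}$ is already mode-independent) and the rest of the plan go through and reproduce the stated estimates.
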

\begin{proof}

First, we prove Lemma \ref{Cor:regularity ur} $i).$ For each $t\in[0,T],$ we have 
\begin{equation}\label{main}
    \begin{split}
        \|v^f(t;\sigma)\|^2_{\mathcal{H}^1}&=\sum_{\xi\in\mathcal{I}}|(1+\mu_\xi) v^f_\xi(t;\sigma)|^2\\
        &\leq C\sum_{\xi\in\mathcal{I}}|v^f_\xi(t;\sigma)|^2+C\sum_{\xi\in\mathcal{I}}|\mu_\xi v^f_\xi(t;\sigma)|^2\\
        &\leq C\sum_{\xi\in\mathcal{I}}\left|\int_0^t |f_\xi(\tau)| (t-\tau)^{\alpha-1}E_{\alpha,\alpha}(-\mu_\xi m_{\sigma} (t-\tau)^\alpha)d\tau\right|^2\\
        &+C\sum_{\xi\in\mathcal{I}}\left|\mu_\xi\int_0^t |f_\xi(\tau)| (t-\tau)^{\alpha-1}E_{\alpha,\alpha}(-\mu_\xi m_{\sigma} (t-\tau)^\alpha)d\tau\right|^2,
\end{split}\end{equation}
where the last inequality is obtained by using Lemma \ref{q_a>a ur}. Below we estimate each term of \eqref{main} separately.
Corollary \ref{L:E1E2 ur} gives us
\begin{equation}\label{2f}
    \begin{split}
     &\sum_{\xi\in\mathcal{I}}\left|\int_0^t |f_\xi(\tau)| (t-\tau)^{\alpha-1}E_{\alpha,\alpha}(-\mu_\xi m_{\sigma} (t-\tau)^\alpha)d\tau\right|^2\\
     &\leq \frac{1}{(\Gamma(\alpha))^2}\sum_{\xi\in\mathcal{I}}\left|\int_0^t |f_\xi(\tau)| (t-\tau)^{\alpha-1}d\tau\right|^2.\\
\end{split}\end{equation}
Using the H\"older inequality for $\left|\int_0^t |f_\xi(\tau)| (t-\tau)^{\alpha-1}d\tau\right|^2$, we obtain
\begin{equation*}
    \begin{split}
     &\sum_{\xi\in\mathcal{I}}\left|\int_0^t |f_\xi(\tau)| (t-\tau)^{\alpha-1}d\tau\right|^2\\&\leq\sum_{\xi\in\mathcal{I}}\left(\int_0^t|f_\xi(\tau)|^2 (t-\tau)^{\alpha-1}d\tau\right)
     \left(\int_0^t (t-\tau)^{\alpha-1}d\tau\right)\\
     &=\left(\int_0^t \left\{\sum_{\xi\in\mathcal{I}}|f_\xi(\tau)|^2\right\} (t-\tau)^{\alpha-1}d\tau\right)
     \left(\int_0^t (t-\tau)^{\alpha-1}d\tau\right)\\
     &\leq \|f\|_{C([0,T];\mathcal{H})}^2\left(\int_0^t (t-\tau)^{\alpha-1}d\tau\right)^2\leq CT^{2\alpha}\|f\|_{C([0,T];\mathcal{H})}^2. 
    \end{split}
\end{equation*}
This together with \eqref{2f} gives
\begin{equation}\label{1f}
    \begin{split}
     \sum_{\xi\in\mathcal{I}}\left|\int_0^t |f_\xi(\tau)| (t-\tau)^{\alpha-1}E_{\alpha,\alpha}(-\mu_\xi m_{\sigma} (t-\tau)^\alpha)d\tau\right|^2
     \leq CT^{2\alpha}\|f\|_{C([0,T];\mathcal{H})}^2.
\end{split}\end{equation}
And for the second term of \eqref{main}, we have
\begin{equation*}
    \begin{split}
        &\sum_{\xi\in\mathcal{I}}\left|\mu_\xi\int_0^t |f_\xi(\tau)| (t-\tau)^{\alpha-1}E_{\alpha,\alpha}(-\mu_\xi m_{\sigma} (t-\tau)^\alpha)d\tau\right|^2\\
        &\leq C\sum_{\xi\in\mathcal{I}}\left|\mu_\xi\int_0^t |f_\xi(\tau)-f_\xi(t)| (t-\tau)^{\alpha-1}E_{\alpha,\alpha}(-\mu_\xi m_{\sigma} (t-\tau)^\alpha)d\tau\right|^2\\
       &+C\sum_{\xi\in\mathcal{I}}\left||f_\xi(t)|\int_0^t \mu_\xi(t-\tau)^{\alpha-1}E_{\alpha,\alpha}(-\mu_\xi m_{\sigma} (t-\tau)^\alpha)d\tau\right|^2.
\end{split}\end{equation*}
Lemma \ref{L:Der E} together with \eqref{EST: Mittag 0} gives
\begin{equation}\label{INT E} 0\leq\int_0^t \mu_\xi (t-\tau)^{\alpha-1}E_{\alpha,\alpha}(-\mu_\xi m_{\sigma} (t-\tau)^\alpha)d\tau=\frac{1}{m_{\sigma}}(1-E_{\alpha,1}(-\mu_\xi m_{\sigma} t^\alpha))<\frac{1}{m_{\sigma}}.\end{equation}
Hence,
\begin{equation}\label{ff}
    \begin{split}
        &\sum_{\xi\in\mathcal{I}}\left|\mu_\xi\int_0^t |f_\xi(\tau)| (t-\tau)^{\alpha-1}E_{\alpha,\alpha}(-\mu_\xi m_{\sigma} (t-\tau)^\alpha)d\tau\right|^2\\
        &\leq \sum_{\xi\in\mathcal{I}}\left|\mu_\xi\int_0^t |f_\xi(\tau)-f_\xi(t)| (t-\tau)^{\alpha-1}E_{\alpha,\alpha}(-\mu_\xi m_{\sigma} (t-\tau)^\alpha)d\tau\right|^2\\
       &+\frac{1}{m_{\sigma}^2}\sum_{\xi\in\mathcal{I}}\left|f_\xi(t)\right|^2.
\end{split}\end{equation}
Now, we estimate the first term of \eqref{ff}. By Lemma \ref{L:converge E} we have
$$\begin{aligned}
&\sum_{\xi\in\mathcal{I}}\left|\mu_\xi\int_0^t |f_\xi(\tau)-f_\xi(t)| (t-\tau)^{\alpha-1}E_{\alpha,\alpha}(-\mu_\xi m_{\sigma} (t-\tau)^\alpha)d\tau\right|^2\end{aligned}$$
$$\begin{aligned}&\leq \sum_{\xi\in\mathcal{I}}\int_0^t(f_\xi(\tau)-f_\xi(t))^2(t-\tau)^{-\alpha-1}d\tau\\
&\times \int_0^t (t-\tau)^{3\alpha-1}\left(\frac{C'\mu_\xi}{1+\mu_\xi m_{\sigma} (t-\tau)^\alpha}\right)^2 d\tau\end{aligned}$$
$$\begin{aligned}&=\sum_{\xi\in\mathcal{I}}\int_0^t\frac{(f_\xi(\tau)-f_\xi(t))^2}{(t-\tau)^{2\alpha}}(t-\tau)^{\alpha-1}d\tau\\
    &\times \int_0^t (t-\tau)^{\alpha-1}\frac{C'^2}{m_{\sigma}^2}\left(\frac{\mu_\xi m_{\sigma}(t-\tau)^\alpha}{1+\mu_\xi m_{\sigma} (t-\tau)^\alpha}\right)^2 d\tau\end{aligned}$$
    $$\begin{aligned}&\leq \frac{C'^2}{m_{\sigma}^2}\left(\int_0^t\frac{\sum_{\xi\in\mathcal{I}}(f_\xi(\tau)-f_\xi(t))^2}{(t-\tau)^{2\alpha}}(t-\tau)^{\alpha-1}d\tau\right)\left(\int_0^t (t-\tau)^{\alpha-1}d\tau\right)\end{aligned}$$
    $$\begin{aligned}&\leq \frac{C'^2}{m_{\sigma}^2}\left(\int_0^t\frac{\|f(\tau)-f(t)\|^2_\mathcal{H}}{(t-\tau)^{2\alpha}}(t-\tau)^{\alpha-1}d\tau\right)\left(\int_0^t (t-\tau)^{\alpha-1}d\tau\right)\end{aligned}$$
    $$\begin{aligned}&\leq \frac{C'^2}{m_{\sigma}^2}\|f\|_{C^\alpha([0,T];\mathcal{H})}^2\left(\int_0^t (t-\tau)^{\alpha-1}d\tau\right)^2\end{aligned}$$
    $$\begin{aligned}&\leq \frac{C'^2 T^{2\alpha}}{\alpha^2m_{\sigma}^2}\|f\|_{C^\alpha([0,T];\mathcal{H})}^2\leq C T^{2\alpha} \|f\|_{C^\alpha([0,T];\mathcal{H})}^2.\end{aligned}$$
This with \eqref{ff} gives
\begin{equation}\label{fff}
    \begin{split}
        &\sum_{\xi\in\mathcal{I}}\left|\mu_\xi\int_0^t |f_\xi(\tau)| (t-\tau)^{\alpha-1}E_{\alpha,\alpha}(-\mu_\xi m_{\sigma} (t-\tau)^\alpha)d\tau\right|^2\\
        &\leq C T^{2\alpha} \|f\|_{C^\alpha([0,T];\mathcal{H})}^2+C\|f\|_{C([0,T];\mathcal{H})}^2.
\end{split}\end{equation}
Substituting \eqref{1f}, \eqref{fff} into \eqref{main}, we have
\begin{equation}\label{star}\|v^f(t;\sigma)\|^2_{\mathcal{H}^1}\leq C T^{2\alpha}\|f\|_{C^\alpha([0,T];\mathcal{H})}^2+C\|f\|_{C([0,T];\mathcal{H})}^2,\;t\in[0,T],\end{equation}
which gives
\begin{equation*}\|v^f\|_{C([0,T];\mathcal{H}^1)}\leq C(T^\alpha+1)\|f\|_{C^\alpha([0,T];\mathcal{H})}.\end{equation*}

For $\mathcal{D}_t^\alpha v^f,$ by \eqref{FODE ur}, we have $\mathcal{D}_t^\alpha v^f=\sum_{\xi\in\mathcal{I}}\left[-\mu_\xi \sigma(t)v_\xi^f(t;\sigma)+f_\xi(t)\right]\omega_\xi.$
Then for each $t\in[0,T],$
\begin{equation}\label{1Du}\begin{split}
    \|\mathcal{D}_t^\alpha v^f\|^2_\mathcal{H}&\leq C\sum_{\xi\in\mathcal{I}}M_{\sigma}^2|\mu_\xi v_\xi^f(t;\sigma)|^2+C\sum_{\xi\in\mathcal{I}}|f_\xi(t)|^2\\
    &\leq C\sum_{\xi\in\mathcal{I}}|\mu_\xi v_\xi^f(t;\sigma)|^2+C\|f(t)\|_\mathcal{H}^2.
\end{split}\end{equation}
Using estimate \eqref{star}, we have
$$\|\mathcal{D}_t^\alpha v^f\|^2_\mathcal{H}\leq C T^{2\alpha}\|f\|_{C^\alpha([0,T];\mathcal{H})}^2+C\|f\|_{C([0,T];\mathcal{H})}^2,\;t\in[0,T],$$
which gives
$$\|\mathcal{D}_t^\alpha v^f\|_{C([0,T];\mathcal{H})}\leq C(T^\alpha+1)\|f\|_{C^\alpha([0,T];\mathcal{H})}.$$

The estimates of $v^f$ and $\mathcal{D}_t^\alpha v^f$ lead to the desired result and
complete the proof of Lemma \ref{Cor:regularity ur} $i).$

Now we on the way to proving the $ii)$ part of Lemma \ref{Cor:regularity ur}. Since we assumed that $f\in C([0,T];\mathcal{H}^\frac{1}{2}),$ we estimate the second term of \eqref{main} differently from $i)$ part of Lemma \ref{Cor:regularity ur}. Using the H\"older inequality for the second term of \eqref{main}, one can get
\begin{equation}\label{4f}
    \begin{split}
        &\sum_{\xi\in\mathcal{I}}\left|\mu_\xi\int_0^t |f_\xi(\tau)| (t-\tau)^{\alpha-1}E_{\alpha,\alpha}(-\mu_\xi m_{\sigma} (t-\tau)^\alpha)d\tau\right|^2\\
        &\leq \sum_{\xi\in\mathcal{I}}\biggl(\int_0^t |\mu_\xi^\frac{1}{2}f_\xi(\tau)|^2 (t-\tau)^{\alpha-1}E_{\alpha,\alpha}(-\mu_\xi m_{\sigma} (t-\tau)^\alpha)d\tau\biggr)\\
        &\times \biggl(\int_0^t \mu_\xi(t-\tau)^{\alpha-1}E_{\alpha,\alpha}(-\mu_\xi m_{\sigma} (t-\tau)^\alpha)d\tau\biggr)\\
        &\leq \frac{1}{m_{\sigma}}\sum_{\xi\in\mathcal{I}}\int_0^t |\mu_\xi^\frac{1}{2}f_\xi(\tau)|^2 (t-\tau)^{\alpha-1}E_{\alpha,\alpha}(-\mu_\xi m_{\sigma} (t-\tau)^\alpha)d\tau,
\end{split}\end{equation}
where the last inequality is obtained by using \eqref{INT E}. In view of Corollary \ref{L:E1E2 ur}, we get
\begin{equation*}
    \begin{split}
    &\sum_{\xi\in\mathcal{I}}\int_0^t |\mu_\xi^\frac{1}{2}f_\xi(\tau)|^2 (t-\tau)^{\alpha-1}E_{\alpha,\alpha}(-\mu_\xi m_{\sigma} (t-\tau)^\alpha)d\tau\\
        &\leq \frac{1}{\Gamma(\alpha)}\sum_{\xi\in\mathcal{I}}\int_0^t |\mu_\xi^\frac{1}{2}f_\xi(\tau)|^2 (t-\tau)^{\alpha-1} d\tau\\
        &=\frac{1}{\Gamma(\alpha)}\int_0^t \left\{\sum_{\xi\in\mathcal{I}}|\mu_\xi^\frac{1}{2}f_\xi(\tau)|^2\right\} (t-\tau)^{\alpha-1} d\tau\\
        &\leq \frac{1}{\Gamma(\alpha)}\|f\|_{C([0,T];\mathcal{H}^\frac{1}{2})}^2\int_0^t (t-\tau)^{\alpha-1}d\tau\leq \frac{1}{\Gamma(\alpha+1)}T^{\alpha}\|f\|_{C([0,T];\mathcal{H}^\frac{1}{2})}^2.
\end{split}\end{equation*}
This together with \eqref{4f} give us
\begin{equation}\label{5f}
    \begin{split}
        \sum_{\xi\in\mathcal{I}}\left|\mu_\xi\int_0^t |f_\xi(\tau)| (t-\tau)^{\alpha-1}E_{\alpha,\alpha}(-\mu_\xi m_{\sigma} (t-\tau)^\alpha)d\tau\right|^2
        \leq CT^{\alpha}\|f\|_{C([0,T];\mathcal{H}^\frac{1}{2})}^2.
\end{split}\end{equation}
Substituting \eqref{1f} and \eqref{5f} into \eqref{main}, we have
\begin{equation}\label{6f}\|v^f(t;\sigma)\|^2_{\mathcal{H}^1}\leq C T^{2\alpha}\|f\|_{C([0,T];\mathcal{H})}^2+CT^\alpha\|f\|_{C([0,T];\mathcal{H}^\frac{1}{2})}^2,\;t\in[0,T],\end{equation}
which gives
\begin{equation}\label{star1}\|v^f\|_{C([0,T];\mathcal{H}^1)}\leq C(T^{\alpha}+T^\frac{\alpha}{2})\|f\|_{C([0,T];\mathcal{H}^\frac{1}{2})}.\end{equation}
In view of \eqref{1Du} and using \eqref{6f} we have
\begin{equation*}\begin{split}
    \|\mathcal{D}_t^\alpha v^f\|^2_\mathcal{H}
    &\leq C T^{2\alpha}\|f\|_{C([0,T];\mathcal{H})}^2+CT^\alpha\|f\|_{C([0,T];\mathcal{H}^\frac{1}{2})}^2+C\|f(t)\|_\mathcal{H}^2\\
    &\leq C T^{2\alpha}\|f\|_{C([0,T];\mathcal{H}^\frac{1}{2})}^2+CT^\alpha\|f\|_{C([0,T];\mathcal{H}^\frac{1}{2})}^2+C\|f(t)\|_{\mathcal{H}^\frac{1}{2}}^2,
\end{split}\end{equation*}
which gives
\begin{equation}\label{dur}\|\mathcal{D}_t^\alpha v^f\|_{C([0,T];\mathcal{H})}\leq C(T^{\alpha}+T^\frac{\alpha}{2}+1)\|f\|_{C([0,T];\mathcal{H}^{\frac{1}{2}})}.\end{equation}

Combining the estimates \eqref{star1} and \eqref{dur} yields the claimed result and complete the proof of Lemma \ref{Cor:regularity ur} $ii).$

To complete the proof of Lemma \ref{Cor:regularity ur} now we are going to prove the $iii)$ part. For each $t\in[0,T],$ we have 
\begin{equation}\label{2ur}
    \begin{aligned}
&\|v^f(t;\sigma)\|^2_{\mathcal{H}^1}=\sum_{\xi\in\mathcal{I}}|(1+\mu_\xi) v^f(t;\sigma)|^2=\sum_{\xi\in\mathcal{I}}\left|\left(1+\frac{1}{\mu_\xi}\right)\mu_\xi v^f(t;\sigma)\right|^2\\
        &\leq\sum_{\xi\in\mathcal{I}}\left|\left(1+\frac{1}{\inf_{\xi\in\mathcal{I}}\mu_\xi}\right)\mu_\xi v^f(t;\sigma)\right|^2
        \leq \left(1+\frac{1}{\inf_{\xi\in\mathcal{I}}\mu_\xi}\right)^2\sum_{\xi\in\mathcal{I}}|\mu_\xi v^f(t;\sigma)|^2\\
        &\leq \left(1+\frac{1}{\inf_{\xi\in\mathcal{I}}\mu_\xi}\right)^2\sum_{\xi\in\mathcal{I}}\left|\mu_\xi\int_0^t |f_\xi(\tau)| (t-\tau)^{\alpha-1}E_{\alpha,\alpha}(-\mu_\xi m_{\sigma} (t-\tau)^\alpha)d\tau\right|^2\\
        &\leq C\sum_{\xi\in\mathcal{I}}\left|\mu_\xi\int_0^t |f_\xi(\tau)| (t-\tau)^{\alpha-1}E_{\alpha,\alpha}(-\mu_\xi m_{\sigma} (t-\tau)^\alpha)d\tau\right|^2.
        \end{aligned}\end{equation}

Using the H\"older inequality for $\left|\mu_\xi\int_0^t |f_\xi(\tau)| (t-\tau)^{\alpha-1}E_{\alpha,\alpha}(-\mu_\xi m_{\sigma} (t-\tau)^\alpha)d\tau\right|^2$ and taking into account Corollary \ref{L:E1E2 ur} and \eqref{INT E}, one can get
    \begin{equation*}\begin{split}
        &\sum_{\xi\in\mathcal{I}}\left|\int_0^t |f_\xi(\tau)|\mu_\xi(t-\tau)^{\alpha-1}E_{\alpha,\alpha}(-\mu_\xi m_{\sigma} (t-\tau)^\alpha)d\tau\right|^2\\
        &\leq\sum_{\xi\in\mathcal{I}}\left(\int_0^t |f_\xi(\tau)|^2\mu_\xi(t-\tau)^{\alpha-1}E_{\alpha,\alpha}(-\mu_\xi m_{\sigma} (t-\tau)^\alpha)d\tau\right)\\
        &\times \left(\int_0^t\mu_\xi (t-\tau)^{\alpha-1}E_{\alpha,\alpha}(-\mu_\xi m_{\sigma} (t-\tau)^\alpha)d\tau\right)\\
        &\leq\frac{1}{m_{\sigma}}\int_0^t \left\{\sum_{\xi\in\mathcal{I}}|\mu_\xi^\frac{1}{2}f_\xi(\tau)|^2\right\}(t-\tau)^{\alpha-1}E_{\alpha,\alpha}(-\inf_{\xi\in\mathcal{I}}\mu_\xi m_{\sigma} (t-\tau)^\alpha)d\tau\\
        &\leq\frac{1}{m_{\sigma}}\|f\|_{C([0,T];\mathcal{H}^{\frac{1}{2}})}^2\int_0^t (t-\tau)^{\alpha-1}E_{\alpha,\alpha}(-\inf_{\xi\in\mathcal{I}}\mu_\xi m_{\sigma} (t-\tau)^\alpha)d\tau.
    \end{split}
\end{equation*}
Lemma \ref{L:Der E} together with \eqref{EST: Mittag 0} gives
\begin{equation}\label{INT E inf}\begin{split} &\int_0^t (t-\tau)^{\alpha-1}E_{\alpha,\alpha}(-\inf_{\xi\in\mathcal{I}}\mu_\xi m_{\sigma} (t-\tau)^\alpha)d\tau\\
&=\frac{1}{m_{\sigma} \inf_{\xi\in\mathcal{I}}\mu_\xi}(1-E_{\alpha,1}(-\inf_{\xi\in\mathcal{I}}\mu_\xi m_{\sigma} t^\alpha))<\frac{1}{m_{\sigma} \inf_{\xi\in\mathcal{I}}\mu_\xi}.\end{split}\end{equation}
Hence,
\begin{equation*}
        \sum_{\xi\in\mathcal{I}}\left|\int_0^t |f_\xi(\tau)|\mu_\xi(t-\tau)^{\alpha-1}E_{\alpha,\alpha}(-\mu_\xi m_{\sigma} (t-\tau)^\alpha)d\tau\right|^2\leq\frac{1}{m_{\sigma}^2\inf_{\xi\in\mathcal{I}}\mu_\xi}\|f\|_{C([0,T];\mathcal{H}^{\frac{1}{2}})}^2.
\end{equation*}
This together with \eqref{2ur} give us
\begin{equation}\label{1ur}\begin{split}
        &\|v^f(t;\sigma)\|^2_{\mathcal{H}^1}\leq C\frac{1}{\inf_{\xi\in\mathcal{I}}\mu_\xi}\|f\|_{C([0,T];\mathcal{H}^{\frac{1}{2}})}^2,
    \end{split}
\end{equation}
 which implies that
\begin{equation}\label{cndp ur}\|v^f\|_{C([0,T];\mathcal{H}^1)}\leq C\frac{1}{\sqrt{\inf_{\xi\in\mathcal{I}}\mu_\xi}}\|f\|_{C([0,T];\mathcal{H}^{\frac{1}{2}})}.\end{equation}

In view of \eqref{1Du} and using \eqref{1ur}, we have
\begin{equation*}\label{2Du}\begin{split}
    \|\mathcal{D}_t^\alpha v^f\|^2_\mathcal{H}
    &\leq C\frac{1}{\inf_{\xi\in\mathcal{I}}\mu_\xi}\|f\|_{C([0,T];\mathcal{H}^{\frac{1}{2}})}^2+C\|f(t)\|_\mathcal{H}^2\\
    &\leq C\frac{1}{\inf_{\xi\in\mathcal{I}}\mu_\xi}\|f\|_{C([0,T];\mathcal{H}^{\frac{1}{2}})}^2+C\|f(t)\|_{\mathcal{H}^\frac{1}{2}}^2,
\end{split}\end{equation*}
which gives
$$\|\mathcal{D}_t^\alpha v^f\|_{C([0,T];\mathcal{H})}\leq C(\frac{1}{\sqrt{\inf_{\xi\in\mathcal{I}}\mu_\xi}}+1)\|f\|_{C([0,T];\mathcal{H}^{\frac{1}{2}})}.$$
The estimates of $v^f$ and $\mathcal{D}_t^\alpha v^f$ yield the desired result and
complete this proof of Lemma \ref{Cor:regularity ur} $iii).$
\end{proof}

Now, we consider the regularity of $v^h.$
\begin{cor}\label{L:E1E2 ui}
For $0<\alpha<1,$ the Mittag-Leffler type function $E_{\alpha,1}(-\mu_\xi m_{\sigma} t^\alpha)$ satisfies
$$0<E_{\alpha,1}(-\mu_\xi m_{\sigma} t^\alpha)\leq 1,\; t\geq 0.$$
In particular, if $t=0,$ then we have
$$E_{\alpha,1}(0)=1.$$
\end{cor}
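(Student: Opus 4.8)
The plan is to read this off directly from the estimates already established for the Mittag--Leffler function, distinguishing the cases $z:=\mu_\xi m_{\sigma}t^\alpha>0$ and $z=0$. Since $0<\alpha<1$, the second index $\beta=1$ satisfies $\beta\geq\alpha$, so Lemma~\ref{L:E1E2} applies with $\beta=1$ (equivalently, one may invoke the complete monotonicity in Lemma~\ref{L:Mon E}): it gives $0\leq E_{\alpha,1}(-\mu_\xi m_{\sigma}t^\alpha)\leq E_{\alpha,1}(0)=\tfrac{1}{\Gamma(1)}=1$ for every $t\geq 0$. This already yields the upper bound $\leq 1$ together with nonnegativity; what remains is only to sharpen the lower bound from $\geq 0$ to $>0$.

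For the strict positivity I would split as follows. If $\mu_\xi>0$ and $t>0$, then $z=\mu_\xi m_{\sigma}t^\alpha>0$, because $m_{\sigma}>0$ by Assumption~\ref{A:D} $(I)$, and the two-sided estimate \eqref{EST: Mittag 0} gives $E_{\alpha,1}(-z)>0$ at once (in fact $E_{\alpha,1}(-z)\geq\tfrac{1}{1+\Gamma(1-\alpha)z}>0$). The complementary case is $z=0$, i.e.\ $t=0$ or $\mu_\xi=0$; here one cannot appeal to \eqref{EST: Mittag 0}, which is stated only for $z>0$, so instead I would evaluate the defining power series $E_{\alpha,1}(w)=\sum_{k=0}^\infty w^k/\Gamma(k\alpha+1)$ at $w=0$: all terms with $k\geq 1$ vanish and the $k=0$ term equals $1/\Gamma(1)=1$, so $E_{\alpha,1}(0)=1>0$. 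Combining the two cases gives $0<E_{\alpha,1}(-\mu_\xi m_{\sigma}t^\alpha)\leq 1$ for all $t\geq 0$, and the last displayed identity is precisely the value $E_{\alpha,1}(0)=1$ just computed.

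Since every ingredient is already in hand, there is no genuine obstacle here; the only point deserving a line of care is that \eqref{EST: Mittag 0} covers $z>0$ strictly, so the endpoint value at $z=0$ must be extracted from the series itself rather than obtained by passing to a limit.
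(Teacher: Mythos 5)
Your proposal is correct and follows essentially the same route as the paper, which deduces the corollary directly from Lemma \ref{L:E1E2} (with $\beta=1$) and the estimate \eqref{EST: Mittag 0}. Your extra care in handling the case $z=0$ via the power series is a fair elaboration of the same argument rather than a different method.
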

\begin{proof}
    This proof follows directly from Lemma \ref{L:E1E2} and \eqref{EST: Mittag 0}.
\end{proof}
\begin{lemma}\label{q_a>a ui}
    Let Assumption \ref{A:D} $(I)$ hold true. Then we have
    $$|v_\xi^h(t;\sigma)|\leq |h_\xi|E_{\alpha,1}(-\mu_\xi m_{\sigma} t^\alpha),\;\text{on}\;\;[0,T],\;\xi\in\mathcal{I}.$$
\end{lemma}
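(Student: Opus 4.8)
The plan is to repeat the comparison argument used for $v^f$ in Lemma~\ref{q_a>a ur}, now decomposing the \emph{initial datum} instead of the source and invoking the sign principle of Lemma~\ref{Cor:1}. Since $h_\xi$ is a scalar, set $h_\xi^{max}=\max\{0,h_\xi\}\geq 0$ and $h_\xi^{min}=\min\{0,h_\xi\}\leq 0$, so that $h_\xi=h_\xi^{max}+h_\xi^{min}$ and $|h_\xi^{max}|+|h_\xi^{min}|=h_\xi^{max}-h_\xi^{min}=|h_\xi|$. By linearity of the problem \eqref{FODE ui} and the uniqueness in Theorem~\ref{Th:DEU}, $v_\xi^h(t;\sigma)=v_\xi^{h,max}(t;\sigma)+v_\xi^{h,min}(t;\sigma)$, where $v_\xi^{h,max}(\cdot;\sigma)$ and $v_\xi^{h,min}(\cdot;\sigma)$ solve \eqref{FODE ui} with the initial values $h_\xi^{max}$ and $h_\xi^{min}$, respectively. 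It therefore suffices to bound each of these two terms.

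First I would treat $v_\xi^{h,max}$ for an index $\xi$ with $\mu_\xi>0$. Alongside it, consider $v_\xi^{h,max}(\cdot;m_\sigma)$, the solution of $\mathcal{D}_t^\alpha v+\mu_\xi m_\sigma v=0$, $v(0)=h_\xi^{max}$, which by \cite{LG99} is given explicitly by $v_\xi^{h,max}(t;m_\sigma)=h_\xi^{max}E_{\alpha,1}(-\mu_\xi m_\sigma t^\alpha)$. Subtracting the two equations, the difference $w_\xi:=v_\xi^{h,max}(t;m_\sigma)-v_\xi^{h,max}(t;\sigma)$ satisfies
$$\mathcal{D}_t^\alpha w_\xi+\mu_\xi m_\sigma w_\xi=\mu_\xi\big(\sigma(t)-m_\sigma\big)v_\xi^{h,max}(t;\sigma),\qquad w_\xi(0)=0,$$
which is of the form \eqref{FODE} with the constant coefficient $m_\sigma\in\mathcal{C}^+[0,T]$ and continuous right-hand side (here one uses $\sigma\in C[0,T]$ and $v_\xi^{h,max}(\cdot;\sigma)\in C[0,T]$ from Theorem~\ref{Th:DEU}). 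By Lemma~\ref{Cor:1}$(b)$ applied to $v_\xi^{h,max}(\cdot;\sigma)$ itself (nonnegative datum, zero source) we have $v_\xi^{h,max}(t;\sigma)\geq 0$; since $\sigma(t)\geq m_\sigma$ by Assumption~\ref{A:D}$(I)$, the source of the $w_\xi$-equation is nonnegative, so a second application of Lemma~\ref{Cor:1}$(b)$ yields $w_\xi\geq 0$, that is,
$$0\leq v_\xi^{h,max}(t;\sigma)\leq v_\xi^{h,max}(t;m_\sigma)=h_\xi^{max}E_{\alpha,1}(-\mu_\xi m_\sigma t^\alpha),\qquad t\in[0,T].$$
By the mirror argument with Lemma~\ref{Cor:1}$(a)$ (giving $v_\xi^{h,min}(\cdot;\sigma)\leq 0$ and the corresponding difference $\leq 0$) one obtains $h_\xi^{min}E_{\alpha,1}(-\mu_\xi m_\sigma t^\alpha)\leq v_\xi^{h,min}(t;\sigma)\leq 0$ on $[0,T]$.

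Adding the absolute values of the two bounds and using $E_{\alpha,1}(-\mu_\xi m_\sigma t^\alpha)\geq 0$ (Corollary~\ref{L:E1E2 ui}) gives
$$|v_\xi^h(t;\sigma)|\leq|v_\xi^{h,max}(t;\sigma)|+|v_\xi^{h,min}(t;\sigma)|\leq\big(h_\xi^{max}-h_\xi^{min}\big)E_{\alpha,1}(-\mu_\xi m_\sigma t^\alpha)=|h_\xi|E_{\alpha,1}(-\mu_\xi m_\sigma t^\alpha)$$
for every $\xi$ with $\mu_\xi>0$. For an index $\eta$ with $\mu_\eta=0$, \eqref{FODE ui} reduces to $\mathcal{D}_t^\alpha v_\eta^h=0$, $v_\eta^h(0)=h_\eta$; applying $I_t^\alpha$ and using Lemma~\ref{I0D} gives $v_\eta^h(t;\sigma)\equiv h_\eta$, and the claim holds there with equality since $E_{\alpha,1}(0)=1$. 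I expect that nothing here is genuinely hard; the only point requiring care is the sign bookkeeping, namely checking that the source term of the difference equation has a definite sign so that Lemma~\ref{Cor:1} is applicable — everything else is a verbatim adaptation of the proof already written for $v^f$.
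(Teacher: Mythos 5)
Your argument is correct and is essentially the paper's own proof: you compare $v_\xi^h(\cdot;\sigma)$ with the explicit constant-coefficient solution $h_\xi E_{\alpha,1}(-\mu_\xi m_{\sigma} t^\alpha)$ via the difference equation with nonnegative (resp.\ nonpositive) right-hand side, apply the sign principle of Lemma \ref{Cor:1}, and conclude with the positivity of $E_{\alpha,1}$ from Corollary \ref{L:E1E2 ui}. The splitting of the scalar $h_\xi$ into $h_\xi^{max}$ and $h_\xi^{min}$ is merely the paper's case distinction according to the sign of $h_\xi$ in a redundant guise, and your separate treatment of $\mu_\xi=0$ is a harmless addition.
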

\begin{proof} In view of \eqref{FODE ui} for each $\xi\in\mathcal{I},$ $v^h(t;m_{\sigma})$ and $v^h(t;\sigma)$ satisfy the following system of fractional differential equations:
$$
\begin{cases}
&\mathcal{D}_t^\alpha v_\xi^h(t;m_{\sigma})+\mu_\xi m_{\sigma} v_\xi^h(t;m_{\sigma})=0;\\
&\mathcal{D}_t^\alpha v_\xi^h(t;\sigma)+\mu_\xi \sigma(t)v_\xi^h(t;\sigma)=0;\\
     &v_\xi^h(0;m_{\sigma})=v_\xi^h(0;\sigma)=0.
\end{cases}
$$
Then the difference $v_\xi^h(t;m_{\sigma})-v_\xi^h(t;\sigma)$ satisfies
$$\mathcal{D}_t^\alpha (v_\xi^h(t;m_{\sigma})-v_\xi^h(t;\sigma))+\mu_\xi m_{\sigma} (v_\xi^h(t;m_{\sigma})-v_\xi^h(t;\sigma))=\mu_\xi v_\xi^h(t;\sigma)(\sigma(t)-m_{\sigma}),$$
$$\;v_\xi^h(0;m_{\sigma})-v_\xi^h(0;\sigma)=0,$$
for each $\xi\in \mathcal{I}.$ 
According to \cite{LG99}, we have
\begin{equation}\label{uh}
   v_\xi^h(t;m_{\sigma})=h_\xi E_{\alpha,1}(-\mu_\xi m_{\sigma} t^\alpha),\;\text{on}\;\;[0,T],\;\xi\in\mathcal{I}. 
\end{equation}
In view of Assumption \ref{A:D} $(I)$ and Lemma \ref{Cor:1}, and using \eqref{uh} we obtain for each $\xi\in\mathcal{I}:$
\begin{equation}\label{uia2<uia1}\begin{cases}
     &  0\leq v_\xi^h(t;\sigma)\leq h_\xi E_{\alpha,1}(-\mu_\xi m_{\sigma} t^\alpha),\;\text{if} \;h_\xi\geq 0;\\
     & h_\xi E_{\alpha,1}(-\mu_\xi m_{\sigma} t^\alpha)\leq v_\xi^h(t;\sigma)\leq 0,\;\text{if} \;h_\xi\leq 0.
\end{cases}\end{equation}
Therefore, due to Corollary \ref{L:E1E2 ui} we have $E_{\alpha,1}(-\mu_\xi m_{\sigma} t^\alpha)>0, \;\forall t\geq 0.$ Hence, we deduce that $$|v_\xi^h(t;\sigma)|\leq|h_\xi| E_{\alpha,1}(-\mu_\xi m_{\sigma} t^\alpha),\; \forall t\in [0,T], \;\xi\in\mathcal{I},$$ 
completing the proof.
\end{proof}
Now, we are ready to present the main lemma about regularity of $v^h(t;\sigma).$
\begin{lemma}\label{Cor:regularity ui}
Let Assumption \ref{A:D} $(I)$ and $(III)$ be satisfied. Then
$$\|v^h\|_{C([0,T];\mathcal{H}^1)}+\|\mathcal{D}_t^\alpha v^h\|_{C([0,T];\mathcal{H})}\leq C \|h\|_{\mathcal{H}^1},$$
where $C>0$ does not depend on $T.$
\end{lemma}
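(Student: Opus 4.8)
The plan is to mimic the structure of the proof of Lemma \ref{Cor:regularity ur}, but working from the pointwise bound on $v_\xi^h(t;\sigma)$ supplied by Lemma \ref{q_a>a ui} instead of the one for $v_\xi^f$. First I would write
$$
\|v^h(t;\sigma)\|_{\mathcal{H}^1}^2=\sum_{\xi\in\mathcal{I}}|(1+\mu_\xi)v_\xi^h(t;\sigma)|^2
\leq C\sum_{\xi\in\mathcal{I}}|v_\xi^h(t;\sigma)|^2+C\sum_{\xi\in\mathcal{I}}|\mu_\xi v_\xi^h(t;\sigma)|^2 ,
$$
and then apply Lemma \ref{q_a>a ui} to bound $|v_\xi^h(t;\sigma)|\leq|h_\xi|E_{\alpha,1}(-\mu_\xi m_\sigma t^\alpha)$. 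For the first sum, Corollary \ref{L:E1E2 ui} gives $E_{\alpha,1}(-\mu_\xi m_\sigma t^\alpha)\leq 1$, so $\sum_\xi|v_\xi^h|^2\leq\sum_\xi|h_\xi|^2\leq\|h\|_{\mathcal H^1}^2$. The only genuine work is the second sum, $\sum_\xi\mu_\xi^2|h_\xi|^2 E_{\alpha,1}(-\mu_\xi m_\sigma t^\alpha)^2$; here I would use Lemma \ref{L:converge E} with $\beta=1$ (choosing an admissible $\mu$ for $\alpha\in(0,1)$), which yields $E_{\alpha,1}(-\mu_\xi m_\sigma t^\alpha)\leq C'/(1+\mu_\xi m_\sigma t^\alpha)$, hence
$$
\mu_\xi E_{\alpha,1}(-\mu_\xi m_\sigma t^\alpha)\leq\frac{C'\mu_\xi}{1+\mu_\xi m_\sigma t^\alpha}\leq\frac{C'}{m_\sigma}\cdot\frac{\mu_\xi m_\sigma}{1+\mu_\xi m_\sigma t^\alpha}.
$$
This last bound is \emph{not} uniform in $t$ at $t=0$, so a direct estimate of $\mu_\xi E_{\alpha,1}$ fails there; the trick is to keep the factor $\mu_\xi$ paired with $h_\xi$ rather than with the Mittag-Leffler function. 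Writing $\mu_\xi|h_\xi|E_{\alpha,1}(-\mu_\xi m_\sigma t^\alpha)\leq (1+\mu_\xi)|h_\xi|\cdot E_{\alpha,1}(-\mu_\xi m_\sigma t^\alpha)\leq(1+\mu_\xi)|h_\xi|$ by Corollary \ref{L:E1E2 ui} immediately gives
$$
\sum_{\xi\in\mathcal{I}}|\mu_\xi v_\xi^h(t;\sigma)|^2\leq\sum_{\xi\in\mathcal{I}}(1+\mu_\xi)^2|h_\xi|^2=\|h\|_{\mathcal H^1}^2 ,
$$
uniformly in $t\in[0,T]$, so in fact Lemma \ref{L:converge E} is not even needed and the crude bound $E_{\alpha,1}\le1$ suffices. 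Combining the two sums, $\|v^h(t;\sigma)\|_{\mathcal H^1}^2\leq C\|h\|_{\mathcal H^1}^2$ for every $t\in[0,T]$, whence $\|v^h\|_{C([0,T];\mathcal H^1)}\leq C\|h\|_{\mathcal H^1}$ with $C$ independent of $T$.

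For the Caputo derivative term I would use that $v_\xi^h$ solves \eqref{FODE ui}, so $\mathcal{D}_t^\alpha v^h=\sum_{\xi\in\mathcal{I}}\bigl(-\mu_\xi\sigma(t)v_\xi^h(t;\sigma)\bigr)\omega_\xi$, and therefore
$$
\|\mathcal{D}_t^\alpha v^h(t;\sigma)\|_{\mathcal H}^2=\sum_{\xi\in\mathcal{I}}|\mu_\xi\sigma(t)v_\xi^h(t;\sigma)|^2\leq M_\sigma^2\sum_{\xi\in\mathcal{I}}|\mu_\xi v_\xi^h(t;\sigma)|^2\leq C\|h\|_{\mathcal H^1}^2,
$$
using \eqref{q<Q} and the estimate for the second sum obtained above. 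Hence $\|\mathcal{D}_t^\alpha v^h\|_{C([0,T];\mathcal H)}\leq C\|h\|_{\mathcal H^1}$ with $C$ independent of $T$, and adding the two estimates yields the claim. The continuity of $v^h:[0,T]\to\mathcal H^1$ and of $\mathcal{D}_t^\alpha v^h:[0,T]\to\mathcal H$ is already guaranteed by Theorem \ref{Th:DEU} (applied with $f\equiv0$), so nothing extra is needed there. The only point that requires a little care — and the one I would flag as the main obstacle — is precisely the $t\to0$ behaviour of $\mu_\xi v_\xi^h$: one must resist estimating $\mu_\xi E_{\alpha,1}(-\mu_\xi m_\sigma t^\alpha)$ by $C/(m_\sigma t^\alpha)$, and instead absorb the $\mu_\xi$ into the $\mathcal H^1$-norm of $h$, which is exactly why the hypothesis $h\in\mathcal H^1$ (Assumption \ref{A:D}$(III)$) is the natural one and why $C$ comes out independent of $T$.
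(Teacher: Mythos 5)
Your proof is correct and follows essentially the same route as the paper: bound $|v^h_\xi(t;\sigma)|\leq|h_\xi|E_{\alpha,1}(-\mu_\xi m_\sigma t^\alpha)\leq|h_\xi|$ via Lemma \ref{q_a>a ui} and Corollary \ref{L:E1E2 ui} to get $\|v^h\|_{C([0,T];\mathcal{H}^1)}\leq C\|h\|_{\mathcal{H}^1}$, then use the equation $\mathcal{D}_t^\alpha v^h=-\sigma(t)\mathcal{M}v^h$ together with $\sigma\leq M_\sigma$ to control the Caputo term. The detour through Lemma \ref{L:converge E} is unnecessary, as you yourself conclude, and the paper likewise only uses the crude bound $E_{\alpha,1}\leq 1$.
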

\begin{proof} Corollary \ref{L:E1E2 ui} and Lemma \ref{q_a>a ui} yield that
\begin{equation*}
    \begin{split}
       &\|v^h(t;\sigma)\|_{\mathcal{H}^1}^2=\sum_{\xi\in\mathcal{I}}|(1+\mu_\xi) v_\xi^h(t;\sigma)|^2\\
       &\leq\sum_{\xi\in\mathcal{I}}|(1+\mu_\xi) h_\xi E_{\alpha,1}(-\mu_\xi m_{\sigma} t^\alpha)|^2\\
       &\leq\sum_{\xi\in\mathcal{I}}|(1+\mu_\xi)h_\xi|^2=\|h\|_{\mathcal{H}^1}^2,
    \end{split}
\end{equation*}
which gives 
\begin{equation}\label{cndp ui}
    \|v^h\|_{C([0,T];\mathcal{H}^1)}\leq \|h\|_{\mathcal{H}^1}.
\end{equation}
In view of the definition of $v^h(t;\sigma),$ it satisfies the equation $\mathcal{D}_t^\alpha v^h(t;\sigma)+\sigma(t)\mathcal{M}v^h(t;\sigma)=0.$ Using this, we get
\begin{equation*}\begin{split}
\|\mathcal{D}_t^\alpha v^h(t;\sigma)\|_\mathcal{H}^2&=\|-\sigma(t)\mathcal{M} v^h(t;\sigma)\|_\mathcal{H}^2\leq M_{\sigma}^2\sum_{\xi\in\mathcal{I}}|\mu_\xi v^h_\xi(t;\sigma)|^2\\
        &\leq M_{\sigma}^2\sum_{\xi\in\mathcal{I}}|(1+\mu_\xi) v^h_\xi(t;\sigma)|^2=M_{\sigma}^2\|v^h(t;\sigma)\|_{\mathcal{H}^1}^2.
\end{split}\end{equation*}
This implies
$$\|\mathcal{D}_t^\alpha v^h\|_{C([0,T];\mathcal{H}^1)}\leq C \|h\|_{\mathcal{H}^1}.$$
Hence, it holds that
$$\|v^h\|_{C([0,T];\mathcal{H}^1)}+\|\mathcal{D}_t^\alpha v^h\|_{C([0,T];\mathcal{H}^1)}\leq C \|h\|_{\mathcal{H}^1},$$
which leads to the claimed result.
\end{proof}

\begin{proof}[Proof of Theorem \ref{thm:MainThmDP}] Taking into account Theorem \ref{Th:DEU}, Lemmas \ref{Cor:regularity ur},  \ref{Cor:regularity ui}, and using the fact that $v(t;\sigma) = v^f(t;\sigma) + v^h(t;\sigma),$ we complete the proof of Theorem \ref{thm:MainThmDP}.
\end{proof}

\section{Well-posedness of Problem \ref{P:DIP}}\label{S:Inv}

In this section, we consider Problem \ref{P:DIP} for the operator $\mathcal{M}$ with a positive discrete spectrum such that $\inf_{\xi\in\mathcal{I}}\mu_\xi>0$ and prove the existence and uniqueness of the solution to this inverse problem. Moreover, we show a continuous dependence result.  

During this section, we assume that the given data $h,\,f$ and $E$ satisfy the following assumptions:
\begin{claim}\label{A:INV} Let $\gamma$ be a constant satisfying \eqref{gamma F}.

$(I)$ $h\in\mathcal{H}^{2+\gamma}$ with $h_\xi:=(h,\omega_\xi)\geq 0$ for each $\xi\in \mathcal{I}$ whenever $F[\omega_\xi]\neq 0;$

$(II)$ $f\in C([0,T];\mathcal{H}^{\frac{3}{2}+\gamma})$ with $f_\xi(t):=(f(t),\omega_\xi)\geq 0$ on $[0,T]$ for each $\xi\in \mathcal{I}$ whenever $F[\omega_\xi]\neq 0;$

$(III)$ $\exists \eta\in \mathcal{I}$ such that $h_\eta>0$ and $f_\eta(t)>0$ on $[0,T]$ and $F[\omega_\eta]\neq 0;$

$(IV)$ $E\in\Psi=\left\{E\in X^\alpha[0,T]\;|\; E(t)\geq c>0,\;\mathcal{D}^\alpha_tE(t)<F[f(t)]\right\}.$
\end{claim}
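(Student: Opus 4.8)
The statement under consideration fixes the data class for the inverse problem, so conditions (I)--(III) are merely the regularity and sign hypotheses imposed on the given $h$ and $f$; nothing is to be derived about them. The substantive content, and what I would prove, is the \emph{internal consistency} of the four conditions: that the admissible set $\Psi$ in (IV) is nonempty and is precisely the set of attainable over-determination data. Concretely, the plan is to fix any $\sigma\in\mathcal{C}^+[0,T]$, let $v(t;\sigma)$ be the direct solution of \eqref{EQ:Frac Pseudo}--\eqref{CON:IN} under (I)--(III), and show that the exact observation $E(t):=F[v(t;\sigma)]$ satisfies $E\in\Psi$. To make this well defined, I first apply the regularity argument of Theorem \ref{thm:MainThmDP} with the spectral weights raised by $1+\gamma$ (so the roles of $\mathcal{H}^1,\mathcal{H}^{1/2}$ are played by $\mathcal{H}^{2+\gamma},\mathcal{H}^{3/2+\gamma}$): the hypotheses $h\in\mathcal{H}^{2+\gamma}$ and $f\in C([0,T];\mathcal{H}^{3/2+\gamma})$ then yield $v(\cdot;\sigma)\in C([0,T];\mathcal{H}^{2+\gamma})$ with $\mathcal{D}_t^\alpha v(\cdot;\sigma)\in C([0,T];\mathcal{H}^{1+\gamma})$. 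Since $F:\mathcal{H}^{1+\gamma}\to\mathbb{R}$ is bounded and $\mathcal{M}v(t;\sigma)\in\mathcal{H}^{1+\gamma}$, both $F[v(t;\sigma)]$ and $F[\mathcal{D}_t^\alpha v(t;\sigma)]$ are finite and continuous, so $E\in X^\alpha[0,T]$; boundedness of $F$ together with \eqref{gamma F} also justifies the term-by-term expansion $E(t)=\sum_{\xi\in\mathcal{I}}F[\omega_\xi]\,v_\xi(t;\sigma)$ and the interchange of $F$ with $\mathcal{D}_t^\alpha$.

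Next I would prove the lower bound $E(t)\ge c>0$ required in (IV). By \eqref{Positiveness F} every $F[\omega_\xi]\ge0$, and for the indices with $F[\omega_\xi]\neq0$ conditions (I)--(II) give $h_\xi\ge0$ and $f_\xi\ge0$, so Lemma \ref{Cor:1}$(b)$ forces $v_\xi(t;\sigma)\ge0$ on $[0,T]$. Hence every summand of $E$ is nonnegative, and retaining only the index $\eta$ of (III),
\begin{equation*}
E(t)\ge F[\omega_\eta]\,v_\eta(t;\sigma),\qquad t\in[0,T].
\end{equation*}
Comparing $v_\eta(\cdot;\sigma)$ with the constant-coefficient problem at $M_\sigma$, exactly as in the proof of Lemma \ref{q_a>a ui}, gives $v_\eta(t;\sigma)\ge h_\eta E_{\alpha,1}(-\mu_\eta M_\sigma t^\alpha)$; since this Mittag-Leffler function is positive and nonincreasing (Corollary \ref{L:E1E2 ui}), I may take $c:=F[\omega_\eta]\,h_\eta E_{\alpha,1}(-\mu_\eta M_\sigma T^\alpha)>0$.

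Finally I would establish the strict inequality $\mathcal{D}_t^\alpha E(t)<F[f(t)]$. Applying $F$ to \eqref{EQ:Frac Pseudo} and using the interchange justified above,
\begin{equation*}
\mathcal{D}_t^\alpha E(t)=F[\mathcal{D}_t^\alpha v(t;\sigma)]=F[f(t)]-\sigma(t)\,F[\mathcal{M}v(t;\sigma)].
\end{equation*}
Here $F[\mathcal{M}v(t;\sigma)]=\sum_{\xi\in\mathcal{I}}F[\omega_\xi]\mu_\xi v_\xi(t;\sigma)\ge F[\omega_\eta]\mu_\eta v_\eta(t;\sigma)>0$, using $F[\omega_\eta]>0$, $\mu_\eta\ge\inf_{\xi\in\mathcal{I}}\mu_\xi>0$, and the lower bound on $v_\eta$ above. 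Together with $\sigma(t)\ge m_\sigma>0$ this makes the subtracted term strictly positive, whence $\mathcal{D}_t^\alpha E(t)<F[f(t)]$ and therefore $E\in\Psi$.

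The step I expect to be the main obstacle is the interchange of $F$ with both the Caputo derivative and the infinite spectral series. This is genuinely where the strengthened hypotheses bite: it requires $\mathcal{M}v(t;\sigma)\in\mathcal{H}^{1+\gamma}$, i.e. $v(t;\sigma)\in\mathcal{H}^{2+\gamma}$, so that $F[\mathcal{M}v]=\sum_{\xi\in\mathcal{I}}F[\omega_\xi]\mu_\xi v_\xi$ converges and represents the functional. This is precisely why (I)--(II) demand $h\in\mathcal{H}^{2+\gamma}$ and $f\in C([0,T];\mathcal{H}^{3/2+\gamma})$ rather than the weaker data of the direct problem, and why \eqref{gamma F} is imposed on $F$. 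Controlling this convergence uniformly in $t\in[0,T]$ — which is what secures the uniform positive constant $c$ and the pointwise strict inequality — is the delicate quantitative point of the argument.
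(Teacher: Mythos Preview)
The ``statement'' you were asked to prove is labelled \texttt{claim} in the source, but note the preamble line \verb|\newtheorem{claim}[thm]{Assumption}|: in this paper the environment prints as \emph{Assumption}. It is a hypothesis on the data $(h,f,E)$, not a theorem, and the paper offers no proof of it. What follows it in the paper are Remarks that (a) exhibit concrete $h,f$ satisfying (I)--(III), (b) observe that since $\sum_\xi F[\omega_\xi]f_\xi(t)>0$ under (II)--(III), any $E\in X^\alpha[0,T]$ with $E\ge c>0$ and $\mathcal{D}_t^\alpha E\le 0$ lies in $\Psi$ (e.g.\ $E(t)=\tfrac{c}{2\pi}e^{-t}$), and (c) derive the $\mathcal{H}^{2+\gamma}$-regularity of $v$ that makes $F[\mathcal{M}v]$ well defined. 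So the paper's ``proof'' amounts to showing nonemptiness of $\Psi$ by explicit example and checking that the functionals are well defined.

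Your write-up goes further: you prove that the \emph{exact} observation $E(t)=F[v(t;\sigma)]$ generated by any $\sigma\in\mathcal{C}^+[0,T]$ actually lies in $\Psi$. That is a genuine consistency statement for the inverse problem (every forward solution produces admissible data), and the argument you sketch is correct. Two small comments: the comparison you invoke for the lower bound on $v_\eta$ is the $M_\sigma$-variant of Lemma~\ref{q_a>a ui} (that lemma as stated compares with $m_\sigma$ and gives an upper bound; reversing the roles gives $v_\eta^h(t;\sigma)\ge h_\eta E_{\alpha,1}(-\mu_\eta M_\sigma t^\alpha)$, and adding the nonnegative $v_\eta^f$ preserves this); and the strict positivity $F[\omega_\eta]>0$ you use follows from $F[\omega_\eta]\neq 0$ in (III) together with the standing sign convention \eqref{Positiveness F}. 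What your approach buys over the paper's remark is a stronger conclusion (consistency rather than mere nonemptiness); what the paper's approach buys is brevity, since for the purposes of the inverse-problem theorems only nonemptiness of $\Psi$ and the $\mathcal{H}^{2+\gamma}$ regularity are needed.
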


For clarity, we present
the following remarks which will explain Assumption \ref{A:INV} by examples.

\begin{rem}
Let $\mathcal{H}=L^2(0,1),$ and let $\mathcal{M}v=-v_{xx},\; x\in (0,1),$ with homogeneous Dirichlet boundary condition. Then the operator has the eigensystem $\{(\pi k)^2, \sqrt{2}\sin{k\pi x}\}_{k\in\mathbb{N}}.$
Let $F[v(t,\cdot)]:=\int_{0}^1 v(t,x)dx.$
Since 
\begin{equation}\label{Example int}F[\omega_k]=\int_0^1 \sqrt{2}\sin{k\pi x}dx=\frac{1+(-1)^{k+1}}{k\pi}=\begin{cases}
    &\frac{2\sqrt{2}}{k\pi},\;\text{if},\;k=2n-1\;(n\in\mathbb{N});\\
    &0, \;\text{if},\;k=2n\;(n\in\mathbb{N}),
\end{cases}\end{equation} 
we have $$\sum_{k\in\mathbb{N}} |F[\omega_k]|^2<\infty.$$ From this we see that $\gamma$ in \eqref{gamma F} can be taken to be $\gamma=0.$
From \eqref{Example int} we see that in Assumption \ref{A:INV} $(I)$ and $(II)$ it is enough to assume that for odd $k=2n-1,\;(n=1,2,...),$ 
$$h_{2n-1}\geq 0\;\;\text{and}\;\;f_{2n-1}(t)\geq 0\;\;\forall t\in[0,T].$$ 
That is why in Assumption \ref{A:INV} $(I),\,(II)$ and $(III)$ we write $F[\omega_{2n-1}]\neq 0.$ \end{rem}

\begin{rem}
 Let us consider Assumption \ref{A:INV} $(I).$ If we assume that $h$ can be represented as the sum of $\omega_\xi$ functions with positive coefficients, that is, $h=\sum_{\xi\in \mathcal{J}}c_\xi\omega_\xi,$ (where all $c_\xi > 0$ and $\mathcal{J}\subset \mathcal{I}$), then Assumption \ref{A:INV} $(I)$ would be satisfied.
 Assumption \ref{A:INV} $(II)$ can be satisfied if the function $f(t)$ can also be expressed as a linear combination of $\{\omega_\xi\}_{\xi\in\mathcal{I}},$ where each coefficient is positive. Furthermore, Assumption \ref{A:INV} $(III)$ can be fulfilled by the aforementioned example if we consider $\eta$ to be an element of $\mathcal{J},$ and if there exists a $\omega_\eta$ function such that $F[\omega_\eta]\neq 0.$ In view of Assumption \ref{A:INV} $(II)$ and $(III),$ we always have $\sum_{\xi\in\mathcal{I}} f_\xi(t) F[\omega_\xi]>0.$ Taking into account this we see that any function from the set  $\left\{E\in X^\alpha[0,T]\;|\; E(t)\geq c>0,\;\mathcal{D}^\alpha_tE(t)\leq 0\right\}$ satisfy Assumption \ref{A:INV} $(IV).$
There are many functions that satisfy Assumption \ref{A:INV} $(IV).$ For example, we can take $E(t)=\frac{c}{2\pi}\exp{(-t)},\;c>0.$
\end{rem}

\begin{rem} If $\sigma(t)$ given and belongs to $\mathcal{C}^+[0,T],$ then taking into account Lemma \ref{q_a>a ur} and Assumption \ref{A:INV} $(II),$ we have
\begin{equation*}
    \begin{split}
       &\|v^f(t;\sigma)\|_{\mathcal{H}^{2+\gamma}}^2=\sum_{\xi\in\mathcal{I}}|(1+\mu_\xi)^{2+\gamma}v^f_\xi(t;\sigma)|^2\\
       &\leq \biggr(\frac{1}{\inf_{\xi\in\mathcal{I}}\mu_\xi}+1\biggl)^{2(2+\gamma)}\sum_{\xi\in\mathcal{I}}\left|\mu_\xi^{2+\gamma}\int_0^{t} f_\xi(\tau)(t-\tau)^{\alpha-1} E_{\alpha,\alpha}(-\mu_\xi m_{\sigma} (t-\tau)^\alpha)d\tau\right|^2\\
       &\leq \biggr(\frac{1}{\inf_{\xi\in\mathcal{I}}\mu_\xi}+1\biggl)^{2(2+\gamma)} \sum_{\xi\in\mathcal{I}}\left(\int_0^{t} |\mu_\xi^{\frac{3}{2}+\gamma}f_\xi(\tau)|^2(t-\tau)^{\alpha-1} E_{\alpha,\alpha}(-\mu_\xi m_{\sigma} (t-\tau)^\alpha)d\tau\right)\\
       &\times\left(\int_0^{t} \mu_\xi(t-\tau)^{\alpha-1} E_{\alpha,\alpha}(-\mu_\xi m_{\sigma} (t-\tau)^\alpha)d\tau\right)
       \end{split}
\end{equation*}
       \begin{equation*}
    \begin{split}
       &\leq \biggr(\frac{1}{\inf_{\xi\in\mathcal{I}}\mu_\xi}+1\biggl)^{2(2+\gamma)}\frac{1}{m_{\sigma}} \int_0^{t} \left\{\sum_{\xi\in\mathcal{I}}|\mu_\xi^{\frac{3}{2}+\gamma}f_\xi(\tau)|^2\right\}(t-\tau)^{\alpha-1} E_{\alpha,\alpha}(-\inf_{\xi\in\mathcal{I}}\mu_\xi m_{\sigma} (t-\tau)^\alpha)d\tau\\ 
       &\leq \biggr(\frac{1}{\inf_{\xi\in\mathcal{I}}\mu_\xi}+1\biggl)^{2(2+\gamma)}\frac{1}{m_{\sigma}} \|f\|^2_{C([0,T];\mathcal{H}^{\frac{3}{2}+\gamma})}\int_0^{t}(t-\tau)^{\alpha-1} E_{\alpha,\alpha}(-\inf_{\xi\in\mathcal{I}}\mu_\xi m_{\sigma} (t-\tau)^\alpha)d\tau\\ 
       &\leq \frac{\biggr(\frac{1}{\inf_{\xi\in\mathcal{I}}\mu_\xi}+1\biggl)^{2(2+\gamma)}}{m_{\sigma}^2 \inf_{\xi\in\mathcal{I}}\mu_\xi}\|f\|^2_{C([0,T];\mathcal{H}^{\frac{3}{2}+\gamma})},
    \end{split}
\end{equation*}
where the last inequality is derived from \eqref{INT E inf}. This gives 
\begin{equation}\label{Est:ur/}
\begin{split}
       \|v^f(t;\sigma)\|_{C([0,T];{\mathcal{H}^{2+\gamma}})}\leq \frac{\biggr(\frac{1}{\inf_{\xi\in\mathcal{I}}\mu_\xi}+1\biggl)^{2+\gamma}}{m_{\sigma} \sqrt{\inf_{\xi\in\mathcal{I}}\mu_\xi}}\|f\|_{C([0,T];\mathcal{H}^{\frac{3}{2}+\gamma})}.
    \end{split}
\end{equation}

In view of Assumption \ref{A:INV} $(II),$ and using Corollary \ref{L:E1E2 ui} and Lemma \ref{q_a>a ui}, we obtain
\begin{equation*}
    \begin{split}
       &\|v^h(t;\sigma)\|_{\mathcal{H}^{2+\gamma}}^2=\sum_{\xi\in\mathcal{I}}|(1+\mu_\xi)^{2+\gamma}v^h_\xi(t;\sigma)|^2\\
       &\leq \biggr(\frac{1}{\inf_{\xi\in\mathcal{I}}\mu_\xi}+1\biggl)^{2(2+\gamma)}\sum_{\xi\in\mathcal{I}}\left|\mu_\xi^{2+\gamma}h_\xi E_{\alpha,1}(-\mu_\xi m_{\sigma} t^\alpha)\right|^2\\
       &\leq \biggr(\frac{1}{\inf_{\xi\in\mathcal{I}}\mu_\xi}+1\biggl)^{2(2+\gamma)}\sum_{\xi\in\mathcal{I}}\left|\mu_\xi^{2+\gamma}h_\xi\right|^2\\
       &\leq \biggr(\frac{1}{\inf_{\xi\in\mathcal{I}}\mu_\xi}+1\biggl)^{2(2+\gamma)}\|h\|^2_{\mathcal{H}^{2+\gamma}}.
    \end{split}
\end{equation*}
This gives us 
\begin{equation}\label{EST:u in}
\|v^h(t;\sigma)\|_{\mathcal{H}^{2+\gamma}}\leq \biggr(\frac{1}{\inf_{\xi\in\mathcal{I}}\mu_\xi}+1\biggl)^{2+\gamma}\|h\|_{\mathcal{H}^{2+\gamma}}.
\end{equation}
Combining \eqref{Est:ur/}, \eqref{EST:u in} and using $v(t;\sigma)=v^f(t;\sigma)+v^h(t;\sigma),$ one can obtain
\begin{equation}\label{EQ:convergence u/}
\|v\|_{C([0,T];\mathcal{H}^{2+\gamma})}\leq \biggr(\frac{1}{\inf_{\xi\in\mathcal{I}}\mu_\xi}+1\biggl)^{2+\gamma}\left(\|f\|_{C([0,T];\mathcal{H}^{\frac{3}{2}+\gamma})}+\|h\|_{\mathcal{H}^{2+\gamma}}\right).
\end{equation}
This implies $v\in C([0,T];\mathcal{H}^{2+\gamma}).$ Since, $F:\mathcal{H}^{1+\gamma}\rightarrow \mathbb{R},$ the $\mathcal{H}^{2+\gamma}$-regularity of $v(t;\sigma)$ implies that $F[v(t;\sigma)],\,F[\mathcal{D}_t^\alpha v(t;\sigma)]$ and  $F[\mathcal{M}v(t;\sigma)]$ are well-defined for proving the well-posedness of Problem \ref{P:DIP}.
\end{rem}

For Problem \ref{P:DIP}, we now present the main theorem of this section.

\begin{thm}\label{Th:main theorem}
   Let Assumption \ref{A:INV} be satisfied. Then the inverse Problem \ref{P:DIP} is well-posed.
\end{thm}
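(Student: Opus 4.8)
The plan is to prove well-posedness of Problem~\ref{P:DIP} by reducing it to a fixed-point problem for the coefficient $\sigma(t)$, following the strategy announced in the introduction. First I would apply the functional $F$ to the equation \eqref{EQ:Frac Pseudo}: since $v(t;\sigma)\in C([0,T];\mathcal{H}^{2+\gamma})$ by \eqref{EQ:convergence u/} and $F$ is bounded on $\mathcal{H}^{1+\gamma}$, all of $F[v(t;\sigma)]$, $F[\mathcal{D}_t^\alpha v(t;\sigma)]$ and $F[\mathcal{M}v(t;\sigma)]$ are well-defined and, by linearity, $F[\mathcal{D}_t^\alpha v] = \mathcal{D}_t^\alpha F[v]$. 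Using the overdetermination condition \eqref{CON:ADD}, $F[v(t;\sigma)] = E(t)$, we get $\mathcal{D}_t^\alpha E(t) + \sigma(t)\,F[\mathcal{M}v(t;\sigma)] = F[f(t)]$, hence the operator equation
\begin{equation*}
\sigma(t) = \frac{F[f(t)] - \mathcal{D}_t^\alpha E(t)}{F[\mathcal{M}v(t;\sigma)]} =: (P\sigma)(t),\quad t\in[0,T],
\end{equation*}
provided the denominator is positive. The numerator is positive by Assumption~\ref{A:INV}$(IV)$; for the denominator I would expand $F[\mathcal{M}v(t;\sigma)] = \sum_{\xi}\mu_\xi v_\xi(t;\sigma)F[\omega_\xi]$, and use the sign conditions \eqref{Positiveness F}, Assumption~\ref{A:INV}$(I)$--$(II)$, together with Lemma~\ref{Cor:1} (which gives $v_\xi(t;\sigma)\ge 0$ whenever $h_\xi\ge 0$ and $f_\xi\ge 0$), to conclude $F[\mathcal{M}v(t;\sigma)]\ge 0$; the strict positivity and a uniform lower bound come from the single mode $\eta$ of Assumption~\ref{A:INV}$(III)$, where $v_\eta(t;\sigma)>0$ can be bounded below using the integral representation \eqref{Volt int} and the monotonicity/positivity of the Mittag-Leffler functions (Lemmas~\ref{L:E1E2}, \ref{L:Der E}, Corollary~\ref{L:E1E2 ui}).

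Next I would set up a closed bounded convex subset $\mathcal{K}\subset C[0,T]$ of the form $\{\sigma\in C[0,T]: 0<m_*\le\sigma(t)\le M_*\}$ — actually, to get an equicontinuity estimate I would take $\mathcal{K}$ inside a Hölder ball $C^\beta[0,T]$ for a suitable $\beta$ — and show that $P$ maps $\mathcal{K}$ into itself. The lower bound $m_*$ follows from the positivity of the numerator and an \emph{upper} bound on $F[\mathcal{M}v(t;\sigma)]$, which I would get by an $\mathcal{H}^{2+\gamma}$-type energy estimate for $v$ exactly as in \eqref{Est:ur/}--\eqref{EST:u in} combined with the assumption \eqref{gamma F} that $\{F[\omega_\xi]/\mu_\xi^\gamma\}\in l^2$ and boundedness of $F$; the upper bound $M_*$ follows symmetrically from the lower bound on $F[\mathcal{M}v(t;\sigma)]$ established via mode $\eta$. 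Continuity and compactness of $P$ on $\mathcal{K}$: I would show that $\sigma\mapsto v(\cdot;\sigma)$ is continuous from $C[0,T]$ into $C([0,T];\mathcal{H}^{2+\gamma})$ (differencing the representation \eqref{Volt int} for two coefficients and applying the Gronwall-type Lemma~\ref{L:Gronwall}), hence $P\sigma$ depends continuously on $\sigma$; compactness of $P(\mathcal{K})$ would come from an Arzelà--Ascoli argument, where the needed equicontinuity of $P\sigma$ on $[0,T]$ is obtained from regularity of $t\mapsto F[\mathcal{M}v(t;\sigma)]$ (using the $C^\alpha$ or $X^\alpha$ regularity of the data and Theorem~\ref{Cor}, the generalized mean value theorem) together with the assumed regularity $E\in X^\alpha[0,T]$. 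Then Schauder's fixed point theorem (Theorem~\ref{Th:Sh} in the appendix) yields a fixed point $\sigma^*\in\mathcal{K}$, and $(\sigma^*, v(\cdot;\sigma^*))$ solves Problem~\ref{P:DIP}; this is the existence part.

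For continuous dependence I would take two data sets $(h^{(j)}, f^{(j)}, E^{(j)})$ with solutions $(\sigma^{(j)}, v^{(j)})$, $j=1,2$, write the equation for $\sigma^{(1)}-\sigma^{(2)}$ by subtracting the two operator equations $\sigma^{(j)} = P^{(j)}\sigma^{(j)}$, and estimate the difference term by term: the difference of numerators is controlled by $\|f^{(1)}-f^{(2)}\|$ and $\|\mathcal{D}_t^\alpha(E^{(1)}-E^{(2)})\|$, while the difference of denominators $F[\mathcal{M}(v^{(1)}-v^{(2)})]$ is controlled by $\|v^{(1)}-v^{(2)}\|_{C([0,T];\mathcal{H}^{2+\gamma})}$, which in turn — via differencing \eqref{Volt int} and the weakly singular Gronwall inequality Lemma~\ref{L:Gronwall} — is bounded by $\|\sigma^{(1)}-\sigma^{(2)}\|_{C[0,T]}$ plus the data differences. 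Since all denominators are bounded below by a positive constant uniformly on $\mathcal{K}$, absorbing the $\|\sigma^{(1)}-\sigma^{(2)}\|$ term on the right (for $T$ small, or by a further Gronwall iteration on $[0,T]$) gives the stability estimate $\|\sigma^{(1)}-\sigma^{(2)}\|_{C[0,T]}\le C(\|h^{(1)}-h^{(2)}\|_{\mathcal{H}^{2+\gamma}} + \|f^{(1)}-f^{(2)}\|_{C([0,T];\mathcal{H}^{3/2+\gamma})} + \|E^{(1)}-E^{(2)}\|_{X^\alpha[0,T]})$, and uniqueness is the special case of identical data.

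The main obstacle I expect is the lower bound on the denominator $F[\mathcal{M}v(t;\sigma)]$, uniform over $\sigma\in\mathcal{K}$ and over $t\in[0,T]$ including $t$ near $0$: everything — the self-mapping property giving both $m_*$ and $M_*$, the compactness via equicontinuity, and the absorption step in the stability estimate — hinges on controlling this quantity from below away from zero, and this is exactly where Assumption~\ref{A:INV}$(III)$ (the existence of a good mode $\eta$) and the fine positivity/monotonicity properties of the Mittag-Leffler functions must be used carefully, since a crude bound would degenerate as $t\to 0^+$ or as $\mu_\xi\to\infty$.
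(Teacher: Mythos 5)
Your overall architecture coincides with the paper's: reduce \eqref{CON:ADD} to the fixed-point equation $\sigma=P[\sigma]$ with $P$ as in \eqref{Operator}, verify an invariant closed convex set, get compactness of the image via the generalized mean value Theorem \ref{Cor} and Arzel\`a--Ascoli, apply Schauder (Theorem \ref{Th:ShF}), then prove stability with the weakly singular Gronwall Lemma \ref{L:Gronwall} and deduce uniqueness from stability. However, there is a genuine gap exactly at the point you yourself single out as the crux: the uniform lower bound on the denominator $F[\mathcal{M}v(t;\sigma)]=\sum_\xi\mu_\xi F[\omega_\xi]v_\xi(t;\sigma)$. Your route is to bound it below through the single mode $\eta$ of Assumption \ref{A:INV} $(III)$, using the representation \eqref{Volt int} and Mittag--Leffler positivity. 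But any such bound is obtained by comparison with the constant coefficient $M_*$ (the upper edge of your trial set $\mathcal{K}$) and therefore \emph{depends on} $M_*$: one gets, roughly, $F[\mathcal{M}v(t;\sigma)]\geq F[\omega_\eta]\min\{\mu_\eta h_\eta,\ \inf_t f_\eta(t)/M_*\}$, which degenerates as $M_*$ grows. The self-mapping condition then reads $M_*\geq C_1/\bigl(F[\omega_\eta]\min\{\mu_\eta h_\eta,\inf_t f_\eta/M_*\}\bigr)$ with $C_1=\sup_t(F[f(t)]-\mathcal{D}_t^\alpha E(t))$, and this is solvable only under an extra largeness/smallness condition (e.g.\ $F[\omega_\eta]\inf_t f_\eta\geq C_1$, or small $T$) that is not part of Assumption \ref{A:INV}. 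The paper avoids this circularity entirely by using Assumption \ref{A:INV} $(IV)$ through the overdetermination itself: since $F[v(t)]=E(t)\geq c>0$ and $\mu_\xi\geq\inf_\xi\mu_\xi>0$, the denominator is bounded below by $C_2=\inf_\xi\mu_\xi\cdot\inf_t E(t)$, a constant determined purely by the data, and bounded above by $C_3$ via $0\leq v_\xi\leq h_\xi+I_t^\alpha f_\xi$ (Lemma \ref{I0D} plus Corollary \ref{cor:-<+}); this yields the data-determined invariant interval $C_0/C_3\leq P[\sigma]\leq C_1/C_2$ in \eqref{C2<C3}, with no restriction on $T$. Without this ingredient your existence step, as written, does not close.

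Two smaller points. First, restricting $\mathcal{K}$ to a H\"older ball to force equicontinuity is both unnecessary and risky: $\mathcal{D}_t^\alpha E$ is only assumed continuous ($E\in X^\alpha[0,T]$), so $P[\sigma]$ need not stay in a fixed $C^\beta$ ball, and the self-mapping property could fail; the paper instead proves equicontinuity of the image $P(D)$ directly, splitting $P[\sigma]$ into $M(t)/N(t)$, using uniform continuity of $M$ and the $|t_1-t_2|^\alpha$ estimate for $N$ coming from Theorem \ref{Cor} and the bound \eqref{EQ:convergence u/}. Second, your stability and uniqueness plan (subtract the two operator equations, control the denominator differences through the differences of $v$, apply the singular Gronwall inequality, and take identical data for uniqueness) is essentially the paper's argument; note only that no smallness of $T$ is needed there, since Lemma \ref{L:Gronwall} absorbs the memory term on the whole interval, and that your insistence on verifying continuity of $\sigma\mapsto v(\cdot;\sigma)$ before invoking Schauder is a legitimate (indeed careful) addition rather than a deviation.
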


\subsection{Proof of Theorem \ref{Th:main theorem}}
First, we prove existence of the solution $(\sigma,v)$ of Problem \ref{P:DIP} in subsection \ref{S:existence}. Second, we show continuous dependence  of $(\sigma,v)$ in subsection \ref{S:continuous}. Then we prove uniqueness of $(\sigma,v)$ in subsection \ref{S:uniqueness}. 

\subsubsection{Existence of $(\sigma,v)$}\label{S:existence}
The existence of $\sigma\in \mathcal{C}^+[0; T]$ with Theorem \ref{Th:DEU} give us the existence of the solution $v(t;\sigma).$ These results tell us the existence of the pair of functions $(\sigma,v),$ which is the solution of Problem \ref{P:DIP}.
In view of this conclusion, in this subsection, we first, deal with showing the existence of the coefficient $\sigma(t)$ from the additional data
$$F[v(t)]=E(t),\;t\in[0,T].$$ 
The  process of showing the existence of the coefficient $\sigma(t)$ follows the following steps:

    \textbf{Step 1,} we introduce the operator $P$ and reduce Problem \ref{P:DIP} to an operator equation for $\sigma(t),$ i.e
    $$P[\sigma(t)]=\sigma(t),\;t\in[0,T].$$
    Then we determine the domain $D$ of $P$ and formulate as lemmas the properties of the operator $P;$
    
    \textbf{Step 2,} we prove the existence of the fixed point $\sigma^*(t)$ of $P$ in $D,$ by using Shauder's fixed point Theorem \ref{Th:ShF}.

%\textbf{Operator $P$.} 
The functional $F$ is linear and bounded on $\mathcal{H}^{1+\gamma}.$ We assumed the linearity and boundedness of $F$, when we introduce the additional condition \eqref{CON:ADD}. Acting by the functional $F$ on \eqref{EXPANTION u} 
we get 
\begin{equation}
\label{FU}F[v(t)]=\sum_{\xi\in\mathcal{I}}v_\xi(t;\sigma)F[\omega_\xi].
\end{equation}
The linearity and boundedness of $F$ on $\mathcal{H}^{1+\gamma}$ in view of Theorem \ref{LFB} give us continuity of $F$ on $\mathcal{H}^{1+\gamma}.$ Here we use the continuity and linearity of $F$ to put functional $F$ under the sum and to get \eqref{FU}. 
Applying the operator $\mathcal{D}_t^\alpha$ to \eqref{FU}, we have the following
\begin{equation}\label{DFU}
\mathcal{D}_t^\alpha F[v(t)]=\sum_{\xi\in\mathcal{I}}\mathcal{D}_t^\alpha v_\xi(t;\sigma)F[\omega_\xi].
\end{equation}
Assumption \ref{A:INV} $(IV)$ together with \eqref{CON:ADD} allow the acting by $\mathcal{D}_t^\alpha$ to $F[v(t)]$ and guarantee its meaningfulness.
Acting by the operator $\mathcal{D}_t^\alpha$ to \eqref{EXPANTION u} under Theorem \ref{thm:MainThmDP} we have
\begin{equation}\label{DU}
\mathcal{D}_t^\alpha v(t;\sigma)=\sum_{\xi\in\mathcal{I}}\mathcal{D}_t^\alpha v_\xi(t;\sigma)\omega_\xi.
\end{equation}
Applying the functional $F$ to \eqref{DU} and taking into account \eqref{DFU} we have
\begin{equation}\label{change order}
    \begin{split}
        F[\mathcal{D}_t^\alpha v(t)]=\mathcal{D}_t^\alpha F[v(t)].
    \end{split}
\end{equation}
Acting by the operator $\mathcal{D}_t^\alpha$ to \eqref{CON:ADD} under Assumption \ref{A:INV} $(IV)$ and taking into account \eqref{change order}, we obtain the following
\begin{equation}\label{dif F}
F[\mathcal{D}^\alpha_t v(t)]=\mathcal{D}^\alpha_t E(t),\;t\in[0,T].
\end{equation}
Applying $F$ to both sides of \eqref{EQ:Frac Pseudo}, we get
\begin{equation*}
    \sigma(t)=\frac{F[f(t)]-F[\mathcal{D}_t^\alpha v(t)]}{F[\mathcal{M}v(t)]}.
\end{equation*}

We will now show that the expression on the right hand side is well-defined, and we analyse its properties.

Using the expansion form of $f(t)$ that is, $f(t)=\sum_{\xi\in\mathcal{I}}f_\xi(t)\omega_\xi$ and taking into account \eqref{EXPANTION u}, \eqref{dif F}, we have 
\begin{equation}\label{EQ:OPerator}
    \begin{split}
    \sigma(t)=\frac{\sum_{\xi\in\mathcal{I}}F[\omega_\xi]f_\xi(t)-\mathcal{D}_t^\alpha E(t)}{\sum_{\xi\in\mathcal{I}}\mu_\xi F[\omega_\xi]v_\xi(t;\sigma)}.
        %\mathcal{D}^\alpha_t E(t)&=\sum_{\xi\in\mathcal{I}}(f_\xi(t)-\mu_\xi \sigma(t)v_\xi(t;\sigma))F[\omega_\xi]
    \end{split}
\end{equation}
Showing the existence and uniqueness of the solution $\sigma(t)$ of \eqref{EQ:OPerator}, we use the Shauder fixed point theorem. For this, we rewrite equation \eqref{EQ:OPerator} in the form $\sigma(t)=P[\sigma](t),$
where
\begin{equation}\label{Operator}
    P[\sigma](t)=\frac{\sum_{\xi\in\mathcal{I}}F[\omega_\xi]f_\xi(t)-\mathcal{D}_t^\alpha E(t)}{\sum_{\xi\in\mathcal{I}}\mu_\xi F[\omega_\xi]v_\xi(t;\sigma)}.
\end{equation}

Now we are going to determine the domain of the operator $P$ in \eqref{Operator}. To this end, we will denote
$$ C_0=\inf_{t\in[0,T]}\left(\sum_{\xi\in\mathcal{I}}F[\omega_\xi]f_\xi(t)-\mathcal{D}_t^\alpha E(t)\right),$$
$$C_1=\sup_{t\in[0,T]}\left(\sum_{\xi\in\mathcal{I}}F[\omega_\xi]f_\xi(t)-\mathcal{D}_t^\alpha E(t)\right),$$
$$C_2=\inf_{\xi\in\mathcal{I}}\mu_\xi \cdot \inf_{t\in[0,T]}E(t),$$
$$C_3=\sum_{\xi\in\mathcal{I}}\mu_\xi F[\omega_\xi]h_\xi+\sup_{t\in[0,T]}\left(\sum_{\xi\in\mathcal{I}}\mu_\xi F[\omega_\xi]I_t^\alpha[f_\xi(t)]\right).$$

It is easy to verify that $0<C_i<\infty,\;(i=0,1,2,3)$ by Assumption \ref{A:INV} and the definition of $I_t^\alpha$.
Here we only need to show the convergence of the series $\sum_{\xi\in\mathcal{I}}\mu_\xi F[\omega_\xi]I_t^\alpha[f_\xi(t)].$ We have

\begin{equation*}
    \begin{split}
        \sum_{\xi\in\mathcal{I}}\mu_\xi F[\omega_\xi]I_t^\alpha[f_\xi(t)]&=\frac{1}{\Gamma(\alpha)}\sum_{\xi\in\mathcal{I}}\mu_\xi F[\omega_\xi]\int_0^t f_\xi(s)(t-s)^{\alpha-1}ds\\
        &=\frac{1}{\Gamma(\alpha)}\int_0^t (t-s)^{\alpha-1} \left\{\sum_{\xi\in\mathcal{I}}\mu_\xi F[\omega_\xi] f_\xi(s)\right\}ds\\
        &\leq\frac{1}{\Gamma(\alpha)}\int_0^t (t-s)^{\alpha-1} \left(\sum_{\xi\in\mathcal{I}}\left|\frac{F[\omega_\xi]}{\mu_\xi^\gamma}\right|^2\right)^\frac{1}{2}\left(\sum_{\xi\in\mathcal{I}}|\mu_\xi^{1+\gamma}f_\xi(s)|^2\right)^\frac{1}{2}ds\\
        &\leq C_F\frac{1}{\Gamma(\alpha)}\int_0^t (t-s)^{\alpha-1} \|f(s)\|_{\mathcal{H}^{1+\gamma}}ds\\
        &\leq C_F \|f\|_{C([0,T];\mathcal{H}^{1+\gamma})}\frac{1}{\Gamma(\alpha)}\int_0^t (t-s)^{\alpha-1}ds\\
        &\leq C_F \|f\|_{C([0,T];\mathcal{H}^{1+\gamma})}\frac{t^\alpha}{\Gamma(\alpha+1)}\leq\frac{C_F T^\alpha}{\Gamma(\alpha+1)} \|f\|_{C([0,T];\mathcal{H}^{1+\gamma})},
    \end{split}
\end{equation*}
where $C_F=\left(\sum_{\xi\in\mathcal{I}}\left|\frac{F[\omega_\xi]}{\mu_\xi^\gamma}\right|^2\right)^\frac{1}{2},$ finite by assumption \eqref{gamma F}.

We can easily see that $C_0\leq C_1$ and that the numerator of \eqref{Operator} belongs to the segment $[C_0,C_1],$ that is,
\begin{equation}\label{C0<C1}
    C_0\leq \sum_{\xi\in\mathcal{I}}F[\omega_\xi]f_\xi(t)-\mathcal{D}_t^\alpha E(t) \leq C_1.
\end{equation}
Now we show that $C_2\leq C_3$ and that the denominator of \eqref{Operator} belongs to the segment $[C_2,C_3].$  For this, first, we formulate the following corollary. 

\begin{cor}\label{cor:-<+}
Let Assumption \ref{A:INV} $(I)$ and $(II)$ hold true. If $\sigma\in \mathcal{C}^+[0,T],$ then for any  $\xi\in \mathcal{I},\; v_\xi(t;\sigma)\geq 0$ on $[0,T].$ 
\end{cor}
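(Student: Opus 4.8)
The plan is to read this corollary as an immediate consequence of Lemma~\ref{Cor:1}$(b)$, the only real work being to check that Assumption~\ref{A:INV} supplies the two sign hypotheses of that lemma. So I would argue as follows.

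Fix $\sigma\in\mathcal{C}^+[0,T]$ and an index $\xi\in\mathcal{I}$. By Theorem~\ref{Th:DEU}$(1)$ the component $v_\xi(\cdot;\sigma)$ is the unique solution of the Cauchy problem \eqref{FODE} and lies in $X^\alpha[0,T]$; in particular both $v_\xi(\cdot;\sigma)$ and $\mathcal{D}_t^\alpha v_\xi(\cdot;\sigma)$ are continuous on $[0,T]$, which is precisely the regularity required to invoke Lemma~\ref{Cor:1}. Now suppose $F[\omega_\xi]\neq 0$. Then Assumption~\ref{A:INV}$(I)$ gives $h_\xi=(h,\omega_\xi)\geq 0$ and Assumption~\ref{A:INV}$(II)$ gives $f_\xi(t)=(f(t),\omega_\xi)\geq 0$ for every $t\in[0,T]$. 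Feeding these two facts into Lemma~\ref{Cor:1}$(b)$ applied to \eqref{FODE} yields $v_\xi(t;\sigma)\geq 0$ on $[0,T]$, as claimed.

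The one point I would flag concerns the indices with $F[\omega_\xi]=0$: for these, Assumption~\ref{A:INV}$(I)$--$(II)$ say nothing about the signs of $h_\xi$ and $f_\xi$, so nonnegativity of $v_\xi$ cannot be extracted from Lemma~\ref{Cor:1}$(b)$ alone. This is harmless for the way the corollary is used downstream: in \eqref{FU}, \eqref{EQ:OPerator}, \eqref{Operator} and in the ensuing estimates of the constants $C_2$ and $C_3$ the factor $F[\omega_\xi]$ always multiplies $v_\xi(t;\sigma)$, so those terms drop out and only the subfamily $\{\xi\in\mathcal{I}:\ F[\omega_\xi]\neq 0\}$ matters; for that subfamily the argument above is complete. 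Consequently there is essentially no analytical obstacle here — the maximum-principle reasoning (via Lemma~\ref{L:max+min}) and the $\mu_\xi=0$ sub-case are already packaged inside Lemma~\ref{Cor:1} — and the \emph{only} thing to get right is the bookkeeping that matches the ``whenever $F[\omega_\xi]\neq 0$'' clauses of Assumption~\ref{A:INV} to the sign hypotheses of Lemma~\ref{Cor:1}$(b)$.
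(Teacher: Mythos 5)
Your proof is correct, but it takes a shorter route than the paper. The paper does not apply Lemma~\ref{Cor:1}$(b)$ directly to the full component $v_\xi(t;\sigma)$; instead it splits $v_\xi(t;\sigma)=v^h_\xi(t;\sigma)+v^f_\xi(t;\sigma)$ and quotes the comparison inequalities \eqref{uia2<uia1} and \eqref{ura2<ura1} (established inside the proofs of Lemmas~\ref{q_a>a ui} and~\ref{q_a>a ur}), which under Assumption~\ref{A:INV}~$(I)$--$(II)$ give $v^h_\xi\geq 0$ and $v^f_\xi\geq 0$ separately. Since those inequalities were themselves deduced from Lemma~\ref{Cor:1}, the underlying engine (the fractional maximum principle of Lemma~\ref{L:max+min} packaged in Lemma~\ref{Cor:1}) is the same in both arguments; your version simply invokes it once on the full Cauchy problem \eqref{FODE} with data $h_\xi\geq 0$, $f_\xi\geq 0$, which is legitimate because Theorem~\ref{Th:DEU} supplies the required $X^\alpha[0,T]$ regularity, and it avoids the detour through the homogeneous/inhomogeneous and max/min splittings. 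Your flag about indices with $F[\omega_\xi]=0$ is well taken: for those $\xi$ Assumption~\ref{A:INV} gives no sign information, so the corollary as literally stated (``for any $\xi\in\mathcal{I}$'') is not fully covered -- but this limitation affects the paper's own proof in exactly the same way, and, as you note, it is immaterial downstream because $v_\xi$ only ever enters the operator $P$ and the bounds \eqref{C2<C3} multiplied by $F[\omega_\xi]$.
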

\begin{proof}
 In view of $v_\xi(t;\sigma)=v_\xi^h(t;\sigma)+v_\xi^f(t;\sigma),$ Assumption \ref{A:INV} $(I),(II)$ together with \eqref{ura2<ura1}, \eqref{uia2<uia1} give us $v_\xi(t;\sigma)\geq 0$ on $[0,T].$
\end{proof}
In view of Corollary \ref{cor:-<+}, we have 
\begin{equation*}\begin{aligned}
0&<\inf_{\xi\in\mathcal{I}}\mu_\xi \inf_{t\in[0,T]} E(t)=C_2=\inf_{\xi\in\mathcal{I}}\mu_\xi \inf_{t\in[0,T]}\left(\sum_{\xi\in\mathcal{I}}F[\omega_\xi]v_\xi(t;\sigma)\right)
\leq \sum_{\xi\in\mathcal{I}}\mu_\xi F[\omega_\xi]v_\xi(t;\sigma)\\
&\leq \sum_{\xi\in\mathcal{I}}\mu_\xi F[\omega_\xi](h_\xi+I_t^\alpha[f_\xi(t)])\leq \sum_{\xi\in\mathcal{I}}\mu_\xi F[\omega_\xi]h_\xi+\sup_{t\in[0,T]}\left(\sum_{\xi\in\mathcal{I}}\mu_\xi F[\omega_\xi]I_t^\alpha[f_\xi(t)]\right)=C_3,
\end{aligned}
\end{equation*}
that is,
\begin{equation}\label{C2<C3}
    C_2\leq \sum_{\xi\in\mathcal{I}}\mu_\xi F[\omega_\xi]v_\xi(t;\sigma)\leq C_3. 
\end{equation}
Here we used the following estimate: By applying the operator
$I_t^\alpha$ to both sides of equation \eqref{FODE} and using Lemma \ref{I0D}, we obtain
$$v_\xi(t;\sigma)+\mu_\xi I_t^\alpha[\sigma(t)v_\xi(t;\sigma)]=I_t^\alpha f_\xi(t) +h_\xi.$$
In view of Corollary \ref{cor:-<+}, we have $v_\xi(t;\sigma)\geq 0$ on $[0,T].$ This, combined with the positivity of $\mu_\xi,$ the positivity of $\sigma,$ and the definition of $I_t^\alpha,$ leads us to the inequality $\mu_\xi I_t^\alpha[\sigma(t)v_\xi(t;\sigma)]\geq 0.$ Given that $v_\xi(t;\sigma)\geq 0$ and $\mu_\xi I_t^\alpha[\sigma(t)v_\xi(t;\sigma)]\geq 0,$ we can conclude that $0\leq v_\xi(t;\sigma)\leq I_t^\alpha f_\xi(t) +h_\xi$ on the interval $[0,T].$ Consequently, based on \eqref{Positiveness F} and Assumption \ref{A:INV} $(I)$ and $(II),$ the following inequality holds
$$\sum_{\xi\in\mathcal{I}}\mu_\xi F[\omega_\xi]v_\xi(t;\sigma)
\leq \sum_{\xi\in\mathcal{I}}\mu_\xi F[\omega_\xi](h_\xi+I_t^\alpha[f_\xi(t)]).$$

 Using \eqref{EQ:OPerator} and taking into account \eqref{C0<C1}, \eqref{C2<C3}, we have
$$0<\frac{C_0}{C_3}\leq \sigma(t)\leq \frac{C_1}{C_2}.$$

Let us introduce now the domain of the operator $P$ as
\begin{equation*}\begin{split}
    D=\biggl\{\sigma\in \mathcal{C}^+[0,T]\big|\; &\frac{C_0}{C_3}\leq \sigma(t)\leq \frac{C_1}{C_2}\biggr\}.
\end{split}\end{equation*}

For the operator $P,$ we state the following lemmas.

\begin{lemma}
The operator $P$ well-defined on the domain $D.$
\end{lemma}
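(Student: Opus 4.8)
The plan is to verify that, for every $\sigma\in D$, the right-hand side of \eqref{Operator} defines a genuine, finite, continuous function of $t\in[0,T]$ whose denominator never vanishes, so that $P[\sigma]$ is an unambiguous element of $\mathcal{C}^+[0,T]$. First I would note that $D\subset\mathcal{C}^+[0,T]$ and that Assumption \ref{A:INV} implies Assumption \ref{A:D} (since $h\in\mathcal{H}^{2+\gamma}\subset\mathcal{H}^1$ and $f\in C([0,T];\mathcal{H}^{3/2+\gamma})\subset C([0,T];\mathcal{H})$), so Theorem \ref{Th:DEU} applies: for each $\sigma\in D$ there is a unique generalized solution $v(\cdot;\sigma)$ with the expansion \eqref{EXPANTION u} and $v_\xi(\cdot;\sigma)\in X^\alpha[0,T]$, and by \eqref{EQ:convergence u/} in fact $v(\cdot;\sigma)\in C([0,T];\mathcal{H}^{2+\gamma})$.

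Consequently $t\mapsto\mathcal{M}v(t;\sigma)$ is continuous into $\mathcal{H}^{1+\gamma}$, so, $F$ being continuous on $\mathcal{H}^{1+\gamma}$ (Theorem \ref{LFB}), the denominator of \eqref{Operator} equals $F[\mathcal{M}v(t;\sigma)]$ and depends continuously on $t$; similarly, since $f\in C([0,T];\mathcal{H}^{1+\gamma})$ and $\mathcal{D}_t^\alpha E\in C[0,T]$ by Assumption \ref{A:INV} $(IV)$, the numerator equals $F[f(t)]-\mathcal{D}_t^\alpha E(t)$ and is continuous in $t$. This interchange of $F$ with the two series is the only delicate step, and it is licit precisely because the relevant partial sums converge in $\mathcal{H}^{1+\gamma}$ — for $\mathcal{M}v$ thanks to the $\mathcal{H}^{2+\gamma}$-bound \eqref{EQ:convergence u/}, for $f$ directly from $f\in C([0,T];\mathcal{H}^{3/2+\gamma})$ — and $F$ is continuous there. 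It then remains to invoke the two-sided bounds already established above: by \eqref{C0<C1} the numerator lies in $[C_0,C_1]$, and by \eqref{C2<C3} the denominator lies in $[C_2,C_3]$, where $0<C_0\le C_1<\infty$ and $0<C_2\le C_3<\infty$ (the only nonobvious point, the convergence of $\sum_{\xi}\mu_\xi F[\omega_\xi]I_t^\alpha[f_\xi(t)]$, being the displayed estimate based on \eqref{gamma F}). In particular the denominator is everywhere $\ge C_2>0$, so the quotient in \eqref{Operator} makes sense, is continuous on $[0,T]$ as a quotient of continuous functions whose denominator is bounded away from zero, and satisfies $0<C_0/C_3\le P[\sigma](t)\le C_1/C_2$ for all $t$. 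Hence $P[\sigma]\in\mathcal{C}^+[0,T]$ is well-defined for every $\sigma\in D$, which is the assertion.

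The main obstacle is really just the justification of pulling $F$ through the expansions of $v$ and $\mathcal{M}v$, and the attendant continuity in $t$ of numerator and denominator; here one must be careful to exploit the $\mathcal{H}^{2+\gamma}$-regularity \eqref{EQ:convergence u/} rather than mere $\mathcal{H}^1$-regularity, because $F$ is only assumed bounded on $\mathcal{H}^{1+\gamma}$ (and $\mathcal{M}$ maps $\mathcal{H}^{2+\gamma}$ into $\mathcal{H}^{1+\gamma}$). Everything else is bookkeeping with the constants $C_0,\dots,C_3$, whose finiteness and positivity are already recorded in the text.
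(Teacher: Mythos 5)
Your proposal is correct and follows essentially the same route as the paper: existence and uniqueness of $v(t;\sigma)$ for $\sigma\in D$ via Theorem \ref{Th:DEU}, plus positivity of the denominator from $C_2>0$ and \eqref{C2<C3}. The extra material you include (continuity of numerator and denominator in $t$ and the justification for pulling $F$ through the expansions using the $\mathcal{H}^{2+\gamma}$-bound \eqref{EQ:convergence u/}) is sound but is handled by the paper in the surrounding remark and in the subsequent lemma that $P$ maps $D$ into itself.
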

\begin{proof}
According to Theorem \ref{Th:DEU}, for every $\sigma(t)$ in the domain $D,$ there exists a unique $v_\xi(t;\sigma)$ for each $\xi\in\mathcal{I}.$ This result, in turn, establishes the existence and uniqueness of $P[\sigma].$ In view of $C_2>0$ and \eqref{C2<C3}, the denominator of $P[\sigma]$
\begin{equation}\label{Positiveness denominator K final}
\sum_{\xi\in\mathcal{I}} \mu_\xi F[\omega_\xi]v_\xi(t;\sigma)>0,\;\text{on}\; [0,T].
\end{equation}
This completes the proof.
\end{proof}
\begin{lemma}
The operator $P$ maps the domain $D$ to itself.
\end{lemma}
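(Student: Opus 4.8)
The plan is to verify the two defining properties of $D$ for the function $P[\sigma]$: that $P[\sigma]\in\mathcal{C}^+[0,T]$ (continuity on $[0,T]$ together with a strictly positive lower bound), and that $P[\sigma]$ obeys the pointwise band $\frac{C_0}{C_3}\le P[\sigma](t)\le \frac{C_1}{C_2}$. The band is essentially already available. Fix $\sigma\in D$; since $D\subset\mathcal{C}^+[0,T]$, Theorem \ref{Th:DEU} produces the generalized solution $v(t;\sigma)$ with the spectral representation \eqref{EXPANTION u}, and the estimate \eqref{EQ:convergence u/} from the preceding Remark gives $v\in C([0,T];\mathcal{H}^{2+\gamma})$. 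Combining \eqref{C0<C1} (the numerator of \eqref{Operator} lies in $[C_0,C_1]$) with \eqref{C2<C3} (the denominator lies in $[C_2,C_3]$) and the fact that $0<C_i<\infty$ for $i=0,1,2,3$ immediately yields $0<\frac{C_0}{C_3}\le P[\sigma](t)\le\frac{C_1}{C_2}$ for every $t\in[0,T]$. This simultaneously gives membership of $P[\sigma]$ in the prescribed band and the uniform lower bound $m_{P[\sigma]}=\frac{C_0}{C_3}>0$ needed for $\mathcal{C}^+[0,T]$.

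The only genuine work is the continuity of $t\mapsto P[\sigma](t)$. Inspecting \eqref{Operator}, the numerator equals $F[f(t)]-\mathcal{D}_t^\alpha E(t)$: the term $\mathcal{D}_t^\alpha E$ is in $C[0,T]$ because $E\in X^\alpha[0,T]$ by Assumption \ref{A:INV} $(IV)$, while $t\mapsto F[f(t)]=\sum_{\xi\in\mathcal{I}}F[\omega_\xi]f_\xi(t)$ is continuous since $f\in C([0,T];\mathcal{H}^{\frac{3}{2}+\gamma})\hookrightarrow C([0,T];\mathcal{H}^{1+\gamma})$ and $F$ is continuous on $\mathcal{H}^{1+\gamma}$ by Theorem \ref{LFB}, so $F$ may be composed with the continuous $\mathcal{H}^{1+\gamma}$-valued curve $t\mapsto f(t)$ (which also justifies interchanging $F$ with the series). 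For the denominator, write $\sum_{\xi\in\mathcal{I}}\mu_\xi F[\omega_\xi]v_\xi(t;\sigma)=F[\mathcal{M}v(t;\sigma)]$; since $v\in C([0,T];\mathcal{H}^{2+\gamma})$, the curve $t\mapsto\mathcal{M}v(t;\sigma)$ is continuous with values in $\mathcal{H}^{1+\gamma}$, and composing once more with the continuous $F$ shows that $t\mapsto F[\mathcal{M}v(t;\sigma)]$ is continuous on $[0,T]$. By \eqref{C2<C3} this denominator is bounded below by $C_2>0$, so the quotient of the two continuous functions in \eqref{Operator} is continuous. Hence $P[\sigma]\in C[0,T]$, and together with the lower bound above, $P[\sigma]\in\mathcal{C}^+[0,T]$; combined with the band estimate, $P[\sigma]\in D$.

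I expect the main obstacle to be making rigorous the passage of $F$ through the infinite sums defining the numerator and denominator of $P[\sigma]$ and the attendant continuity of $t\mapsto F[\mathcal{M}v(t;\sigma)]$ — equivalently, that the series for $F[\mathcal{M}v(t;\sigma)]$ converges in $\mathcal{H}^{1+\gamma}$ uniformly in $t$. This is precisely what the $\mathcal{H}^{2+\gamma}$-regularity estimates of the preceding Remark, culminating in \eqref{EQ:convergence u/}, were designed to supply, so the argument reduces to quoting that estimate together with the continuity of $F$ on $\mathcal{H}^{1+\gamma}$; no new inequalities are required.
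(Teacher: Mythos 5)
Your proposal is correct and follows essentially the same route as the paper: the band $\frac{C_0}{C_3}\leq P[\sigma](t)\leq\frac{C_1}{C_2}$ is read off from \eqref{C0<C1} and \eqref{C2<C3}, and continuity of $P[\sigma]$ completes membership in $D$. In fact your continuity step is carried out more carefully than in the paper, which merely asserts continuity from that of $f_\xi$, $E$ and $v_\xi(t;\sigma)$ termwise, whereas you justify passing $F$ through the series via the uniform bound \eqref{EQ:convergence u/} and the continuity of $F$ on $\mathcal{H}^{1+\gamma}$.
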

\begin{proof}
Let $\sigma\in D.$  The continuity of $P[\sigma]$ in $t$ can be established based on the continuity of $f_\xi,\;E$ and $v_\xi(t;\sigma)$ for each $\xi\in\mathcal{I}.$ Since the inequalities \eqref{C2<C3} do not dependent on $\sigma,$ the equality \eqref{Operator} together with \eqref{C0<C1}, \eqref{C2<C3} give us that
\begin{equation}\label{c1<Qa<c2}0<\frac{C_0}{C_3}\leq P[\sigma]\leq \frac{C_1}{C_2},\;\forall{\sigma}\in D,\end{equation}
completing the proof.
\end{proof}
 
%\textbf{Existence of $\sigma$.} To establish the existence of a solution $\sigma(t)$ for equation \eqref{EQ:OPerator}, we introduce the following lemmas. 
\begin{lemma}\label{L:Bounded}
The set $P(D)=\{P[\sigma]:\,\sigma\in D\}$ is uniformly bounded. 
\end{lemma}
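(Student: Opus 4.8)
The plan is to read the bound directly off the two-sided estimate \eqref{c1<Qa<c2} that was already established in the proof that $P$ maps $D$ into itself, and to emphasize that the constants appearing there do not depend on the particular $\sigma\in D$.

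First I would recall the structure of $P[\sigma](t)$ in \eqref{Operator}: its numerator $\sum_{\xi\in\mathcal{I}}F[\omega_\xi]f_\xi(t)-\mathcal{D}_t^\alpha E(t)$ is confined to the segment $[C_0,C_1]$ by \eqref{C0<C1}, and its denominator $\sum_{\xi\in\mathcal{I}}\mu_\xi F[\omega_\xi]v_\xi(t;\sigma)$ is confined to the segment $[C_2,C_3]$ by \eqref{C2<C3}, where $0<C_i<\infty$ for $i=0,1,2,3$. The essential point to stress is that none of $C_0,C_1,C_2,C_3$ depends on the choice of $\sigma$: the constants $C_0,C_1$ involve only $f$ and $E$, while the lower bound $C_2$ uses only $\inf_{\xi\in\mathcal{I}}\mu_\xi$ and $\inf_{t}E(t)$ and the upper bound $C_3$ uses only the inequality $0\leq v_\xi(t;\sigma)\leq h_\xi+I_t^\alpha f_\xi(t)$ coming from Corollary \ref{cor:-<+} together with $I_t^\alpha[\sigma(t)v_\xi(t;\sigma)]\geq 0$, both of which hold for every $\sigma\in \mathcal{C}^+[0,T]$, hence for every $\sigma\in D$, uniformly.

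Then I would conclude: for all $\sigma\in D$ and all $t\in[0,T]$,
$$
0<\frac{C_0}{C_3}\leq P[\sigma](t)\leq \frac{C_1}{C_2},
$$
so that
$$
\sup_{\sigma\in D}\|P[\sigma]\|_{C[0,T]}=\sup_{\sigma\in D}\sup_{t\in[0,T]}|P[\sigma](t)|\leq \frac{C_1}{C_2}<\infty.
$$
Since the preceding lemma already gives $P(D)\subset C[0,T]$, this precisely says that $P(D)$ is a uniformly bounded subset of $C[0,T]$, completing the proof.

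There is no real obstacle here; this lemma is a bookkeeping step toward applying the Arzel\`a--Ascoli and Schauder theorems. The only thing that requires a word of care is the $\sigma$-independence of the constants $C_0,\dots,C_3$, and that was already arranged in their definitions and in the derivation of \eqref{C2<C3}; everything else is an immediate consequence of \eqref{c1<Qa<c2}.
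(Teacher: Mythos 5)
Your proposal is correct and follows essentially the same route as the paper: the paper's proof likewise reads the uniform bound $P[\sigma]\leq \frac{C_1}{C_2}$ directly off the estimate \eqref{c1<Qa<c2}, whose constants are $\sigma$-independent by construction. Your added emphasis on the $\sigma$-independence of $C_0,\dots,C_3$ is a sound clarification but not a different argument.
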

\begin{proof} The estimate \eqref{c1<Qa<c2} yields that
$$P[\sigma]\leq \frac{C_1}{C_2},$$
for all $\sigma\in D$ and for all $P[\sigma]\in P(D).$ This completes the proof.
\end{proof}
\begin{lemma}\label{L:equicon}
The set $P(D)=\{P[\sigma]:\,\sigma\in D\}$ is equicontinuous.
\end{lemma}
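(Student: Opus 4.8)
The plan is to estimate $|P[\sigma](t_1)-P[\sigma](t_2)|$ for $t_1,t_2\in[0,T]$ by a modulus of continuity that is uniform over $\sigma\in D$. Write $P[\sigma](t)=\dfrac{\mathcal{N}(t)}{\mathcal{Q}(t;\sigma)}$ with numerator $\mathcal{N}(t)=\sum_{\xi\in\mathcal{I}}F[\omega_\xi]f_\xi(t)-\mathcal{D}_t^\alpha E(t)$ and denominator $\mathcal{Q}(t;\sigma)=\sum_{\xi\in\mathcal{I}}\mu_\xi F[\omega_\xi]v_\xi(t;\sigma)$. Using the elementary identity
$$
\frac{\mathcal{N}(t_1)}{\mathcal{Q}(t_1;\sigma)}-\frac{\mathcal{N}(t_2)}{\mathcal{Q}(t_2;\sigma)}
=\frac{\mathcal{N}(t_1)-\mathcal{N}(t_2)}{\mathcal{Q}(t_1;\sigma)}+\mathcal{N}(t_2)\,\frac{\mathcal{Q}(t_2;\sigma)-\mathcal{Q}(t_1;\sigma)}{\mathcal{Q}(t_1;\sigma)\mathcal{Q}(t_2;\sigma)},
$$
together with the uniform lower bound $\mathcal{Q}(t;\sigma)\ge C_2>0$ from \eqref{C2<C3} and the uniform upper bound $|\mathcal{N}(t)|\le C_1$ from \eqref{C0<C1}, it suffices to prove that $t\mapsto\mathcal{N}(t)$ is uniformly continuous on $[0,T]$ (which is immediate, since $\mathcal{N}\in C[0,T]$ because $F[f(\cdot)]\in C[0,T]$ by boundedness of $F$ on $\mathcal H^{1+\gamma}$ and $\mathcal{D}_t^\alpha E\in C[0,T]$ by Assumption \ref{A:INV} $(IV)$), and that $t\mapsto\mathcal{Q}(t;\sigma)$ is equicontinuous on $[0,T]$, uniformly in $\sigma\in D$.

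So the heart of the proof is the equicontinuity of $\mathcal{Q}(\cdot;\sigma)$. First I would split $v_\xi=v_\xi^h+v_\xi^f$ as in the excerpt and treat the two pieces separately. For the homogeneous part, $v_\xi^h(t;\sigma)=h_\xi E_{\alpha,1}(-\mu_\xi\sigma\text{-term})$ is not available in closed form for variable $\sigma$, so instead I would estimate $|v_\xi^h(t_1;\sigma)-v_\xi^h(t_2;\sigma)|$ via the generalized mean value theorem (Theorem \ref{Cor}): $v_\xi^h(t_1;\sigma)-v_\xi^h(t_2;\sigma)=\frac{1}{\Gamma(\alpha+1)}\,{}_{t_2}\mathcal{D}_t^\alpha v_\xi^h(\zeta;\sigma)\,(t_1-t_2)^\alpha$ for some intermediate $\zeta$, and then bound ${}_{t_2}\mathcal{D}_t^\alpha v_\xi^h$ using the equation \eqref{FODE ui} together with the pointwise bound $|v_\xi^h(t;\sigma)|\le|h_\xi|$ from Lemma \ref{q_a>a ui} and Corollary \ref{L:E1E2 ui}; similarly for $v_\xi^f$ via \eqref{FODE ur} and Lemma \ref{q_a>a ur}. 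After multiplying by $\mu_\xi F[\omega_\xi]$ and summing, the convergence of the resulting series is controlled exactly as in the computation preceding \eqref{C0<C1}: one inserts $\mu_\xi^\gamma$ and its reciprocal and applies Cauchy–Schwarz with $\{F[\omega_\xi]/\mu_\xi^\gamma\}\in l^2$ from \eqref{gamma F}, the $\mathcal{H}^{2+\gamma}$-bound \eqref{EQ:convergence u/} on $v$, and Assumption \ref{A:INV} $(I),(II)$. This yields $|\mathcal{Q}(t_1;\sigma)-\mathcal{Q}(t_2;\sigma)|\le C\,|t_1-t_2|^\alpha$ with $C$ independent of $\sigma\in D$.

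Combining the two estimates gives $|P[\sigma](t_1)-P[\sigma](t_2)|\le \omega(|t_1-t_2|)$ with $\omega$ a modulus of continuity independent of $\sigma$, which is precisely equicontinuity of $P(D)$. The main obstacle I expect is making the $\alpha$-Hölder (or at least uniform) estimate on $\mathcal{Q}(\cdot;\sigma)$ genuinely uniform in $\sigma$: the constants $M_\sigma$ appearing through the equations are only bounded below by $m_\sigma$ and above on the domain $D$, so one must check carefully that every constant entering the bound on ${}_{t_2}\mathcal{D}_t^\alpha v_\xi$ depends on $\sigma$ only through quantities that are uniformly controlled on $D$ (namely through $\sup_D\sigma\le C_1/C_2$ and $\inf_D\sigma\ge C_0/C_3>0$), and that the singular kernel $(t-s)^{\alpha-1}$ is integrated in a way that produces the $|t_1-t_2|^\alpha$ factor rather than a worse power. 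Once this bookkeeping is done, equicontinuity follows, and this lemma together with Lemmas \ref{L:Bounded} and the self-mapping lemma sets up the application of Schauder's fixed point theorem.
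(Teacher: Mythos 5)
Your proposal is correct and follows essentially the same route as the paper: write $P[\sigma]$ as a quotient, use the uniform bounds $C_1$ and $C_2$ plus uniform continuity of the numerator, and obtain a uniform $|t_1-t_2|^\alpha$ estimate on the denominator via the generalized mean value theorem (Theorem \ref{Cor}), the equation for $v_\xi$, the positivity from Corollary \ref{cor:-<+}, Cauchy--Schwarz with $C_F$, and the bound \eqref{EQ:convergence u/}, with $m_\sigma=C_0/C_3$ and $M_\sigma=C_1/C_2$ ensuring uniformity over $D$. The only cosmetic difference is that the paper applies the mean value theorem directly to $v_\xi$ rather than to $v_\xi^h$ and $v_\xi^f$ separately, which changes nothing essential.
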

\begin{proof}
In view of \eqref{Operator} we have for any $\sigma\in D$ and $\forall t_1,t_2\in [0,T],$
\begin{equation}\label{Differ K}
    \biggl|P[\sigma](t_1)-P[\sigma](t_2)\biggr|\leq \frac{|M(t_1)-M(t_2)|}{N(t_2)}+\frac{M(t_1)|N(t_1)-N(t_2)|}{N(t_1)N(t_2)},
\end{equation}
where
$$M(t)=\sum_{\xi\in \mathcal{I}}F[\omega_\xi] f_\xi(t) - \mathcal{D}_t^\alpha E(t),$$
$$N(t)=\sum_{\xi\in \mathcal{I}}\mu_\xi F[\omega_\xi]v_\xi(t;\sigma).$$

Since the difference \eqref{Differ K} is estimated using the differences of the functions $M(t)$ and
$N(t)$, we further evaluate these differences. First, we obtain an estimate of the difference of $N(t).$
Therefore, we will deal with the function $M(t).$

Without loss of generality, let us take $t_1<t_2.$ Then by Theorem \ref{Cor} for $\xi\in\mathcal{I}$ there exists $t\in(t_1,\,t_2)$ such that
\begin{equation}\label{ut1-ut2}
    |v_\xi(t_1;\sigma)-v_\xi(t_2;\sigma)|=\frac{1}{\Gamma(\alpha+1)}|{}_{t_1}\mathcal{D}_t^\alpha v_\xi(t;\sigma)|\cdot|t_1-t_2|^\alpha.
\end{equation}
Here ${}_{t_1}\mathcal{D}_t^\alpha v_\xi(t;\sigma)$ is defined in the following way: 

Let us pick $\xi\in\mathcal{I},$ and then the definition of the Caputo derivative yields the following
\begin{align*}
{}_{t_1}\mathcal{D}_t^\alpha v_\xi(t;\sigma)&=\frac{1}{\Gamma(1-\alpha)}\int_{t_1}^t (t-\tau)^{-\alpha}\frac{d}{d\tau} v_\xi(\tau;\sigma)d\tau\\
&=\frac{1}{\Gamma(1-\alpha)}\int_{0}^t (t-\tau)^{-\alpha}\frac{d}{d\tau} v_\xi(\tau;\sigma)d\tau-\frac{1}{\Gamma(1-\alpha)}\int_{0}^{t_1} (t-\tau)^{-\alpha}\frac{d}{d\tau} v_\xi(\tau;\sigma)d\tau\\
&=\mathcal{D}_t^\alpha v_\xi(t;\sigma)-\mathcal{D}_{t_1}^\alpha v_\xi(t_1;\sigma).
\end{align*}
We note that in the case $t_1=0$ the following equality holds: $${}_{t_1}\mathcal{D}_t^\alpha v_\xi(t;\sigma)=\mathcal{D}_t^\alpha v_\xi(t;\sigma).$$
Now in view of \eqref{FODE} we express ${}_{t_1}\mathcal{D}_t^\alpha v_\xi(t;\sigma)$ for $t_1>0$ as follows 
$$\mathcal{D}_t^\alpha v_\xi(t;\sigma)+\mu_\xi \sigma(t)v_\xi(t;\sigma)=f_\xi(t),\; \text{for}\; t\in(t_1,t_2);$$
$$\mathcal{D}_{t_1}^\alpha v_\xi(t_1;\sigma)+\mu_\xi \sigma(t_1)v_\xi(t_1;\sigma)=f_\xi(t_1),\; \text{for}\; t_1\in(0,T].$$
Subtracting these equations from each other we get
$${}_{t_1}\mathcal{D}_t^\alpha v_\xi(t;\sigma)=f_\xi(t)-\mu_\xi \sigma(t)v_\xi(t;\sigma)-\left(f_\xi(t_1)-\mu_\xi \sigma(t_1)v_\xi(t_1;\sigma)\right).$$
Then in view of Assumption \ref{A:INV} $(II)$ and Corollary \ref{cor:-<+}, we have for all $t_1\geq 0$ and $t\in(t_1,t_2)$ that
\begin{align*}
  &|{}_{t_1}\mathcal{D}_t^\alpha v_\xi(t;\sigma)|\leq f_\xi(t)+\mu_\xi M_{\sigma} v_\xi(t;\sigma)+f_\xi(t_1)+\mu_\xi M_{\sigma} v_\xi(t_1;\sigma).
\end{align*}
Here and further we can take $m_{\sigma}=\frac{C_0}{C_3},\,M_{\sigma}=\frac{C_1}{C_2}.$
This together with \eqref{ut1-ut2} gives
\begin{equation*}\begin{split}
        &|N(t_1)-N(t_2)|\leq \sum_{\xi\in \mathcal{I}}\mu_\xi F[\omega_\xi]|v_\xi(t_1;\sigma)-v_\xi(t_2;\sigma)|\\&=\frac{|t_1-t_2|^\alpha}{\Gamma(\alpha+1)}\sum_{\xi\in \mathcal{I}}\mu_\xi F[\omega_\xi]\cdot |{}_{t_1}\mathcal{D}_t^\alpha v_\xi(t;\sigma)|\\
        &\leq \frac{|t_1-t_2|^\alpha}{\Gamma(\alpha+1)}\biggl(\sum_{\xi\in \mathcal{I}}\mu_\xi F[\omega_\xi]f_\xi(t)+M_{\sigma}\sum_{\xi\in \mathcal{I}}\mu_\xi^2 F[\omega_\xi] v_\xi(t;\sigma)\\
        &+\sum_{\xi\in \mathcal{I}}\mu_\xi F[\omega_\xi]f_\xi(t_1)+M_{\sigma}\sum_{\xi\in \mathcal{I}}\mu_\xi^2 F[\omega_\xi] v_\xi(t_1;\sigma)\biggr).
        \end{split}\end{equation*}
By using the Cauchy-Schwarz inequality for each term, we obtain       
\begin{equation}\begin{split}\label{N}
        &|N(t_1)-N(t_2)|\\&\leq\frac{|t_1-t_2|^\alpha}{\Gamma(\alpha+1)}\biggl[C_F\left(\sum_{\xi\in \mathcal{I}}|\mu_\xi^{1+\gamma}f_\xi(t)|^2\right)^\frac{1}{2}+C_F\left(\sum_{\xi\in \mathcal{I}}|\mu_\xi^{1+\gamma}f_\xi(t_1)|^2\right)^\frac{1}{2}\\
        &+M_{\sigma} C_F\left(\sum_{\xi\in \mathcal{I}}|\mu_\xi^{2+\gamma}v_\xi(t;\sigma)|^2\right)^\frac{1}{2}+M_{\sigma} C_F\left(\sum_{\xi\in \mathcal{I}}|\mu_\xi^{2+\gamma}v_\xi(t_1;\sigma)|^2\right)^\frac{1}{2}\biggr]\\
        &\leq \frac{|t_1-t_2|^\alpha}{\Gamma(\alpha+1)}\biggl(2C_F\|f\|_{C([0,T];\mathcal{H}^{1+\gamma})}+2M_{\sigma}C_F\|v\|_{C([0,T];\mathcal{H}^{2+\gamma})}\biggr)\\
        &\leq C_4 |t_1-t_2|^\alpha,
\end{split}\end{equation}
where
\begin{align*}
C_4&=\frac{2C_F}{\Gamma(\alpha+1)}\biggl[\|f\|_{C([0,T];\mathcal{H}^{1+\gamma})}\\
&+M_{\sigma}\biggl(\frac{1}{\inf_{\xi\in\mathcal{I}}\mu_\xi}+1\biggl)^{2+\gamma}\left(\|h\|_{\mathcal{H}^{2+\gamma}}+\frac{1}{m_{\sigma} \sqrt{\inf_{\xi\in\mathcal{I}}\mu_\xi}}\|f\|_{C([0,T];\mathcal{H}^{\frac{3}{2}+\gamma})}\right)\biggr]
  \end{align*}   
and the last inequality is obtained by using \eqref{EQ:convergence u/}.

Fix an arbitrary $\varepsilon>0.$ Since $M(t)$ is continuous in $[0,T]$, then $\exists\delta_1=\delta_1(\varepsilon),\;\forall t_1,t_2\in[0,T]\;(|t_1-t_2|<\delta_1):$
\begin{equation}\label{M}
    |M(t_1)-M(t_2)|<\frac{C_2\varepsilon}{2}.
\end{equation}

Let $$\delta=\min\left\{\delta_1(\varepsilon),\,\left(\frac{C_2^2}{2C_1 C_4}\varepsilon\right)^\frac{1}{\alpha}\right\}.$$

From \eqref{N} for $|t_1-t_2|<\delta,$ we obtain
\begin{equation}\label{Fin: N}
    |N(t_1)-N(t_2)|<\frac{C_2^2}{2C_1}\varepsilon.
\end{equation}

Substituting \eqref{M} and \eqref{Fin: N} into \eqref{Differ K}, we get
$$|P[\sigma](t_1)-P[\sigma](t_2)|<\varepsilon.$$

Therefore, the set $P(D)$ is equicontinuous. 
\end{proof}
%\begin{thm}\label{existence}
 %   Let us suppose that Assumption \ref{A:INV} are fulfilled. Then there
%exists a fixed point of $P$ in $D.$
%\end{thm}
\begin{thm}\label{existence (a,v)}
  Let us suppose that Assumption \ref{A:INV} are fulfilled. Then there exist the generalized solutions $(\sigma,v)$ of Problem \ref{P:DIP}. 
\end{thm}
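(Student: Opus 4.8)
The plan is to realise the coefficient $\sigma$ as a fixed point of the operator $P$ from \eqref{Operator} on the set $D$ by means of Schauder's fixed point Theorem \ref{Th:ShF}, and then to take $v:=v(\cdot;\sigma)$ to be the generalized solution of the direct problem supplied by Theorem \ref{Th:DEU}. First I would record that, since $0<C_0/C_3\le C_1/C_2<\infty$, the set $D$ is a nonempty (it contains the constant $C_0/C_3$), bounded, closed and convex subset of the Banach space $C[0,T]$, and that every $\sigma\in D$ lies in $\mathcal{C}^+[0,T]$ with $m_\sigma=C_0/C_3$ and $M_\sigma=C_1/C_2$. By the lemma that $P$ maps $D$ into itself together with Lemmas \ref{L:Bounded} and \ref{L:equicon}, the image $P(D)$ is uniformly bounded and equicontinuous, hence relatively compact in $C[0,T]$ by the Arzel\`a--Ascoli theorem, and $\overline{P(D)}\subset D$.

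\textbf{Continuity of $P$.} The remaining and principal ingredient is the continuity of $P:D\to D$. Fix $\sigma_1,\sigma_2\in D$ and use the common constant $M_\sigma=C_1/C_2$ in the Volterra representation \eqref{Volt int}. Subtracting the representations of $v_\xi(\cdot;\sigma_1)$ and $v_\xi(\cdot;\sigma_2)$, writing
$$
(M_\sigma-\sigma_1)v_\xi(\cdot;\sigma_1)-(M_\sigma-\sigma_2)v_\xi(\cdot;\sigma_2)=(M_\sigma-\sigma_1)\bigl(v_\xi(\cdot;\sigma_1)-v_\xi(\cdot;\sigma_2)\bigr)+(\sigma_2-\sigma_1)v_\xi(\cdot;\sigma_2),
$$
and invoking the identity $\int_0^t\mu_\xi(t-s)^{\alpha-1}E_{\alpha,\alpha}(-\mu_\xi M_\sigma(t-s)^\alpha)\,ds\le 1/M_\sigma$ from Lemma \ref{L:Der E} (exactly as in the contraction estimate for the operator $A$ in the proof of Theorem \ref{Th:DEU}), one obtains
$$
\|v_\xi(\cdot;\sigma_1)-v_\xi(\cdot;\sigma_2)\|_{C[0,T]}\le \frac{1}{m_\sigma}\,\|\sigma_1-\sigma_2\|_{C[0,T]}\,\|v_\xi(\cdot;\sigma_2)\|_{C[0,T]},\qquad m_\sigma=\tfrac{C_0}{C_3}.
$$
To carry this over to the series $N(t;\sigma):=\sum_{\xi\in\mathcal{I}}\mu_\xi F[\omega_\xi]v_\xi(t;\sigma)$ in the denominator of $P[\sigma]$, I would split the sum at a large index: the tail is controlled uniformly in $t$ and in $\sigma\in D$ by the pointwise bound $0\le v_\xi(t;\sigma)\le h_\xi+I_t^\alpha f_\xi(t)$ established in the derivation of \eqref{C2<C3}, together with the Cauchy--Schwarz estimate already used for $C_3$, whose tails are arbitrarily small by \eqref{gamma F}; the finite part then tends to $0$ by the displayed estimate. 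Hence $N(\cdot;\sigma_n)\to N(\cdot;\sigma)$ in $C[0,T]$ whenever $\sigma_n\to\sigma$. Since the numerator $M(t)=\sum_{\xi}F[\omega_\xi]f_\xi(t)-\mathcal{D}_t^\alpha E(t)\in(0,C_1]$ does not depend on $\sigma$ and $N(\cdot;\sigma)\ge C_2>0$ uniformly on $D$ by \eqref{C2<C3}, the bound $\|P[\sigma_n]-P[\sigma]\|_{C[0,T]}\le (C_1/C_2^2)\,\|N(\cdot;\sigma_n)-N(\cdot;\sigma)\|_{C[0,T]}$ shows $P$ is continuous on $D$.

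\textbf{Conclusion.} With $D$ nonempty, closed, bounded and convex, $P:D\to D$ continuous, and $P(D)$ relatively compact, Schauder's fixed point Theorem \ref{Th:ShF} provides $\sigma^*\in D\subset\mathcal{C}^+[0,T]$ with $P[\sigma^*]=\sigma^*$. Let $v:=v(\cdot;\sigma^*)$ be the unique generalized solution of Problem \ref{P:D} with coefficient $\sigma^*$ given by Theorem \ref{Th:DEU}; then $(\sigma^*,v)$ satisfies \eqref{EQ:Frac Pseudo} and \eqref{CON:IN} by construction. Since each step in the derivation of \eqref{EQ:OPerator} is reversible — using \eqref{change order}, i.e. $F[\mathcal{D}_t^\alpha v]=\mathcal{D}_t^\alpha F[v]$, which is legitimate by the $\mathcal{H}^{2+\gamma}$-regularity \eqref{EQ:convergence u/} and the boundedness of $F$ on $\mathcal{H}^{1+\gamma}$ — the relation $P[\sigma^*]=\sigma^*$ is equivalent to $\mathcal{D}_t^\alpha F[v(t)]=\mathcal{D}_t^\alpha E(t)$ on $[0,T]$. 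Applying $I_t^\alpha$ to $F[v]-E$ and using Lemma \ref{I0D} together with $F[v(0)]=F[h]=E(0)$ then yields $F[v(t)]=E(t)$, i.e. the overdetermination condition \eqref{CON:ADD}; hence $(\sigma^*,v)$ is a generalized solution of Problem \ref{P:DIP}. I expect the main obstacle to be the continuity of $P$ — more precisely, the uniform-in-$t$ (and uniform in $\sigma\in D$) passage to the limit inside the series $N(t;\sigma)$, for which the splitting into a finite part plus a tail estimated via \eqref{gamma F} is the crucial device; a secondary point is the recovery of the full condition \eqref{CON:ADD} from $\mathcal{D}_t^\alpha(F[v]-E)=0$, which rests on the compatibility $E(0)=F[h]$ between the data.
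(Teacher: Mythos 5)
Your proposal follows the same route as the paper: Schauder's fixed point theorem applied to the operator $P$ of \eqref{Operator} on the closed convex set $D$, with relative compactness of $P(D)$ supplied by Lemmas \ref{L:Bounded} and \ref{L:equicon} and the Arzel\`a--Ascoli theorem, and with $v:=v(\cdot;\sigma^*)$ then furnished by Theorem \ref{Th:DEU}. Two of your additions go beyond the paper's own proof. First, you verify the continuity of $P$ on $D$ via the Lipschitz-type bound $\|v_\xi(\cdot;\sigma_1)-v_\xi(\cdot;\sigma_2)\|_{C[0,T]}\leq m_{\sigma}^{-1}\|\sigma_1-\sigma_2\|_{C[0,T]}\|v_\xi(\cdot;\sigma_2)\|_{C[0,T]}$ together with a finite-part/tail splitting of the series $N(t;\sigma)$ controlled through the pointwise bound $0\leq v_\xi\leq h_\xi+I_t^\alpha f_\xi$ and \eqref{gamma F}; this estimate is sound. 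The paper never checks continuity of $P$ (the version of Schauder's theorem it quotes as Theorem \ref{Th:ShF} happens not to mention it), whereas the standard Schauder theorem does require it, so your argument genuinely completes that step rather than merely duplicating the paper.

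Second, you are right that a fixed point of $P$ only delivers $\mathcal{D}_t^\alpha F[v(t)]=\mathcal{D}_t^\alpha E(t)$, and that recovering the overdetermination \eqref{CON:ADD} by applying $I_t^\alpha$ (Lemma \ref{I0D}) forces the compatibility $F[h]=E(0)$. Be aware, though, that this compatibility condition appears nowhere in Assumption \ref{A:INV}, so as written your final step invokes a hypothesis that is not available; this is less a defect of your argument than of the theorem's formulation, since the paper's own proof simply stops at the fixed point and never verifies \eqref{CON:ADD} at all, and without $F[h]=E(0)$ no pair $(\sigma,v)$ can satisfy \eqref{CON:ADD} at $t=0$, so the condition is in fact necessary. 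If you state $F[h]=E(0)$ explicitly as an additional (consistency) assumption on the data, your proof is complete and, on this point as well as on the continuity of $P$, more thorough than the published one.
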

\begin{proof}
In view of the Arzela-Ascoli Theorem \ref{Th:AA} and using Lemmas \ref{L:Bounded} and \ref{L:equicon} we see that $P(D)$ is relatively compact in $C[0, T].$ Moreover, $P:D\rightarrow D$ and $D$ is a closed convex subset of $C[0,T]$. According to the Shauder fixed point Theorem \ref{Th:ShF}, the equation $$P[\sigma]=\sigma,\;\sigma\in D,$$
has a solution $\sigma=\sigma^*\in D.$
This together with Theorem \ref{Th:DEU} allow us to conclude the existence of the generalized solutions $(\sigma,v)$ of Problem \ref{P:DIP}. This completes the proof.
\end{proof}

\subsubsection{Continuous dependence of $(\sigma,v)$ on the data}\label{S:continuous} In this subsection, we investigate the dependence of $(\sigma,v)$ on the data.

\begin{thm}\label{continuous}
Let Assumption \ref{A:INV} be satisfied. Then the solution $(\sigma,v)$ of the problem \eqref{EQ:Frac Pseudo}--\eqref{CON:ADD} depends continuously on the data, that is, there exist positive constants $M_4$ and $M_9,$ such that
\begin{equation}\label{a-b}
        \|\tilde{\sigma}-\sigma\|_{C[0,T]}\leq M_4\big(\|\tilde{h}-h\|_{\mathcal{H}^{2+\gamma}}+\|\tilde{f}-f\|_{C([0,T];\mathcal{H}^{\frac{3}{2}+\gamma})}+\|\tilde{E}-E\|_{X^\alpha[0,T]}\big),
\end{equation}
and
\begin{equation}\begin{split}\label{u-v}
    \|\tilde{v}-v\|_{C([0,T];\mathcal{H})}&\leq M_{9}\big(\|\tilde{h}-h\|_{\mathcal{H}^{2+\gamma}}+\|\tilde{f}-f\|_{C([0,T];\mathcal{H}^{\frac{3}{2}+\gamma})}+\|\tilde{E}-E\|_{X^\alpha[0,T]}\big),
\end{split}\end{equation}
where $(\tilde{\sigma},\tilde{v})$ is the solution of the inverse problem \eqref{EQ:Frac Pseudo}--\eqref{CON:ADD} corresponding to the set of data $\left\{\tilde{h},\tilde{f},\tilde{E}\right\},$ that satisfy Assumption \ref{A:INV}. 
\end{thm}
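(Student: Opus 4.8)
The plan is to run both $\sigma$ and $\tilde\sigma$ through the operator equation \eqref{EQ:OPerator} and close the resulting estimate with the weakly singular Gronwall inequality, Lemma \ref{L:Gronwall}. Write $M(t)=\sum_{\xi\in\mathcal{I}}F[\omega_\xi]f_\xi(t)-\mathcal{D}_t^\alpha E(t)$ and $N(t)=\sum_{\xi\in\mathcal{I}}\mu_\xi F[\omega_\xi]v_\xi(t;\sigma)$ for the data $\{h,f,E\}$, and $\tilde M,\tilde N$ for $\{\tilde h,\tilde f,\tilde E\}$. The bounds \eqref{C0<C1}--\eqref{C2<C3}, valid for both data sets with constants $C_i$ that we fix uniformly over data satisfying Assumption \ref{A:INV}, give $C_2\le N(t),\tilde N(t)$ and $M(t)\le C_1$, hence
\begin{equation*}
|\tilde\sigma(t)-\sigma(t)|=\left|\frac{\tilde M(t)}{\tilde N(t)}-\frac{M(t)}{N(t)}\right|\le\frac{1}{C_2}\,|\tilde M(t)-M(t)|+\frac{C_1}{C_2^2}\,|\tilde N(t)-N(t)|.
\end{equation*}
The numerator difference is harmless: since $\tilde M-M=\sum_{\xi\in\mathcal{I}}F[\omega_\xi](\tilde f_\xi-f_\xi)-\mathcal{D}_t^\alpha(\tilde E-E)$, Cauchy--Schwarz with the constant $C_F$ from \eqref{gamma F} and $\mathcal{H}^{3/2+\gamma}\hookrightarrow\mathcal{H}^{1+\gamma}$ yield $|\tilde M(t)-M(t)|\le C_F\|\tilde f-f\|_{C([0,T];\mathcal{H}^{3/2+\gamma})}+\|\tilde E-E\|_{X^\alpha[0,T]}$.

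For the denominator difference set $w_\xi:=\tilde v_\xi(\cdot;\tilde\sigma)-v_\xi(\cdot;\sigma)$; subtracting the two copies of \eqref{FODE} shows that $w_\xi$ solves a fractional equation with coefficient $\tilde\sigma$, initial value $\tilde h_\xi-h_\xi$, and source $(\tilde f_\xi-f_\xi)(t)+\mu_\xi(\sigma(t)-\tilde\sigma(t))v_\xi(t;\sigma)$. Splitting $w_\xi$ into the part carrying the initial value and the part carrying the source and applying Lemmas \ref{q_a>a ui} and \ref{q_a>a ur} (with $m_{\tilde\sigma}$ in place of $m_\sigma$), together with $v_\xi(\cdot;\sigma)\ge 0$ from Corollary \ref{cor:-<+} and $F[\omega_\xi]\ge 0$ from \eqref{Positiveness F}, gives
\begin{equation*}
|\tilde N(t)-N(t)|\le\sum_{\xi\in\mathcal{I}}\mu_\xi F[\omega_\xi]\left(|\tilde h_\xi-h_\xi|+\int_0^t|(\tilde f_\xi-f_\xi)(s)|\,k_\xi(t,s)\,ds+\mu_\xi\int_0^t|(\sigma-\tilde\sigma)(s)|\,v_\xi(s;\sigma)\,k_\xi(t,s)\,ds\right),
\end{equation*}
with $k_\xi(t,s)=(t-s)^{\alpha-1}E_{\alpha,\alpha}(-\mu_\xi m_{\tilde\sigma}(t-s)^\alpha)$. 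The first two sums are bounded by $C(\|\tilde h-h\|_{\mathcal{H}^{2+\gamma}}+\|\tilde f-f\|_{C([0,T];\mathcal{H}^{3/2+\gamma})})$ by Cauchy--Schwarz in $\xi$ (weight $F[\omega_\xi]/\mu_\xi^\gamma$), H\"older in $s$, Corollary \ref{L:E1E2 ur}, \eqref{INT E} and the embeddings, exactly as in the regularity estimates of Section \ref{S:D} and in the Remark leading to \eqref{EQ:convergence u/}. In the third sum I keep $\psi(s):=|\tilde\sigma(s)-\sigma(s)|$ under the integral; Cauchy--Schwarz in $\xi$, then H\"older in $s$, then Corollary \ref{L:E1E2 ur} and \eqref{INT E inf} give
\begin{equation*}
\sum_{\xi\in\mathcal{I}}\mu_\xi^2 F[\omega_\xi]\int_0^t|(\sigma-\tilde\sigma)(s)|\,v_\xi(s;\sigma)\,k_\xi(t,s)\,ds\le C\,\|v\|_{C([0,T];\mathcal{H}^{2+\gamma})}\left(\int_0^t\psi^2(s)(t-s)^{\alpha-1}ds\right)^{1/2},
\end{equation*}
where $\|v\|_{C([0,T];\mathcal{H}^{2+\gamma})}$ is finite and controlled by the fixed norms of $h,f$ via \eqref{EQ:convergence u/}.

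Collecting the three estimates, $\psi(t)\le A\delta+B\big(\int_0^t\psi^2(s)(t-s)^{\alpha-1}ds\big)^{1/2}$ with $\delta:=\|\tilde h-h\|_{\mathcal{H}^{2+\gamma}}+\|\tilde f-f\|_{C([0,T];\mathcal{H}^{3/2+\gamma})}+\|\tilde E-E\|_{X^\alpha[0,T]}$ and constants $A,B$ independent of the data. Squaring and using $(x+y)^2\le 2x^2+2y^2$ gives $\psi^2(t)\le 2A^2\delta^2+2B^2\int_0^t\psi^2(s)(t-s)^{\alpha-1}ds$, and Lemma \ref{L:Gronwall} applied with the constant majorant $z\equiv 2A^2\delta^2$ produces $\psi^2(t)\le 2A^2\delta^2E_{\alpha,1}(2B^2\Gamma(\alpha)t^\alpha)\le 2A^2E_{\alpha,1}(2B^2\Gamma(\alpha)T^\alpha)\,\delta^2$, which is \eqref{a-b} with $M_4=A\big(2E_{\alpha,1}(2B^2\Gamma(\alpha)T^\alpha)\big)^{1/2}$. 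For \eqref{u-v} write $\tilde v-v=\big(\tilde v(\cdot;\tilde\sigma)-v(\cdot;\tilde\sigma)\big)+\big(v(\cdot;\tilde\sigma)-v(\cdot;\sigma)\big)$, where $v(\cdot;\tilde\sigma)$ is the solution of Problem \ref{P:D} with coefficient $\tilde\sigma$ and data $h,f$; the first bracket solves a direct problem with data $\tilde h-h,\tilde f-f$ and is bounded in $C([0,T];\mathcal{H})$ by $C\delta$ through Theorem \ref{thm:MainThmDP}, while the second bracket is estimated just as the third sum above but with $\psi(s)$ replaced by its supremum, giving $\le C\|\tilde\sigma-\sigma\|_{C[0,T]}\|v\|_{C([0,T];\mathcal{H}^{2+\gamma})}\le CM_4\delta$ by \eqref{a-b}; adding the two yields \eqref{u-v}.

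The main obstacle is precisely the circularity of the problem: $\sigma$ is defined through $v$ and $v$ through $\sigma$, so the estimate for $|\tilde\sigma-\sigma|$ unavoidably contains $|\tilde\sigma-\sigma|$ on the right. The whole argument hinges on arranging this self-reference to appear as a weakly singular integral $\int_0^t\psi^2(s)(t-s)^{\alpha-1}ds$ rather than as the supremum norm, so that Lemma \ref{L:Gronwall} can close the loop; the regularity $v\in C([0,T];\mathcal{H}^{2+\gamma})$ established in \eqref{EQ:convergence u/} is exactly what is needed for the series defining $\tilde N-N$ (which carries the weight $\mu_\xi^2$) to converge and to absorb the $\mu_\xi^{1+\gamma}$ produced by $C_F$.
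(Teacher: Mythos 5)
Your proof is correct and essentially the paper's: the same reduction of $\tilde{\sigma}-\sigma$ to the $F$-weighted series (your quotient difference $\tilde M/\tilde N-M/N$ is an algebraic rearrangement of the identity the paper gets by applying $F$ to the subtracted equations), the same splitting of $w_\xi=\tilde v_\xi-v_\xi$ into initial-value and source parts estimated via Lemmas \ref{q_a>a ui} and \ref{q_a>a ur}, the same $C_F$ Cauchy--Schwarz weighting together with \eqref{INT E inf} and the a priori bound \eqref{EQ:convergence u/}, closure by the singular Gronwall Lemma \ref{L:Gronwall}, and a direct-problem stability estimate for $\tilde v-v$ (your insertion of the intermediate solution $v(\cdot;\tilde{\sigma})$ is equivalent to the paper's device of putting $(\tilde{\sigma}-\sigma)\mathcal{M}v$ into the source term $g$). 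Two cosmetic remarks: the self-referential term is already linear in $|\tilde{\sigma}-\sigma|$ after Corollary \ref{L:E1E2 ur} and Cauchy--Schwarz, so the paper applies Lemma \ref{L:Gronwall} directly to $|\tilde{\sigma}-\sigma|$ and your extra H\"older-in-$s$ step with the passage to $\psi^2$ is valid but unnecessary; and your constants cannot in fact be fixed uniformly over all data satisfying Assumption \ref{A:INV} (they involve $\inf_t\tilde E$, $m_{\tilde{\sigma}}$ and the data norms), but, exactly as in the paper, constants depending on the two given data sets are all that the theorem asserts.
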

\begin{proof} Let $\Upsilon=\{h,f,E\}$ and $\tilde{\Upsilon}=\left\{\tilde{h},\tilde{f},\tilde{E}\right\}$ be two sets of data that satisfy Assumption \ref{A:INV}.
Let $(\sigma,v)$ and $(\tilde{\sigma},\tilde{v})$ be solutions of the inverse problem \eqref{EQ:Frac Pseudo}--\eqref{CON:ADD} corresponding to the data $\Upsilon$ and $\tilde{\Upsilon},$ i.e.
\begin{equation}\label{EQ:u inverse}
  \begin{cases}
  &\mathcal{D}_t^\alpha v(t;\sigma)+\sigma(t)\mathcal{M}v(t;\sigma)=f(t),\\
  &v(0;\sigma)=h,\\
  &F[v(t;\sigma)]=E(t),\\
  \end{cases}
\end{equation}
and
\begin{equation}\label{EQ:ap u inverse}
\begin{cases}
  &\mathcal{D}_t^\alpha\tilde{v}(t;\tilde{\sigma})+\tilde{\sigma}(t)\mathcal{M}\tilde{v}(t;\tilde{\sigma})=\tilde{f}(t),\\
  &\tilde{v}(0;\tilde{\sigma})=\tilde{h},\\
  &F[\tilde{v}(t;\tilde{\sigma})]=\tilde{E}(t),
  \end{cases}
\end{equation}
respectively. 

Subtracting equations \eqref{EQ:u inverse} and \eqref{EQ:ap u inverse} from each other we have
\begin{equation}\label{EQ:differ u-tilde u}
\begin{cases}
  &\mathcal{D}_t^\alpha w+\tilde{\sigma}(t)\mathcal{M}w(t)=g(t),\\
  &w(0)=\tilde{h}-h,\\
  &F[w(t)]=\tilde{E}(t)-E(t),
  \end{cases}
\end{equation}
where $w(t)=\tilde{v}(t;\tilde{\sigma})-v(t;\sigma)$ and $g(t)=\tilde{f}(t)-f(t)-(\tilde{\sigma}(t)-\sigma(t))\mathcal{M}v(t;\sigma).$ 

Similarly to \eqref{EXPANTION u}, we can write the solution $\tilde{v}(t;\tilde{\sigma})$ of the problem \eqref{EQ:ap u inverse} in the form $\tilde{v}(t;\tilde{\sigma})=\sum_{\xi\in\mathcal{I}}v_\xi(t;\tilde{\sigma})\omega_\xi.$ This together with \eqref{EXPANTION u} give us 
\begin{equation}\label{EXPANTION w}
    w(t)=\sum_{\xi\in\mathcal{I}}w_\xi(t;\tilde{\sigma})\omega_\xi,\;t\in [0,T],
\end{equation}
where $w_\xi(t;\tilde{\sigma})=\tilde{v}_\xi(t;\tilde{\sigma})-v_\xi(t;\sigma),$ and it satisfies the fractional equation
\begin{equation*}\label{EQ:ode w}
   \mathcal{D}_t^\alpha w_\xi(t;\tilde{\sigma})+\mu_\xi\tilde{\sigma}(t)w_\xi(t;\tilde{\sigma})=g_\xi(t),\;w_\xi(0;\tilde{\sigma})=\tilde h_\xi-h_\xi,\;\xi\in\mathcal{I}, 
\end{equation*}
where $g_\xi(t)=(g(t),\omega_\xi)_\mathcal{H}.$ 

Acting by $\mathcal{D}_t^\alpha$ on both sides of the last equation of \eqref{EQ:differ u-tilde u} and taking into account \eqref{change order}, we have
\begin{equation}\label{EQ:FD u-tilde u}
    F[\mathcal{D}_t^\alpha w(t)]=\mathcal{D}_t^\alpha\tilde{E}(t)-\mathcal{D}_t^\alpha E(t).
\end{equation}

Applying $F$ on both sides of the first equation of \eqref{EQ:differ u-tilde u}, we get
\begin{equation*}
   \tilde{\sigma}(t)-\sigma(t)=\frac{F[\tilde{f}(t)-f(t)]-F[D^\alpha_t w(t)]-\tilde{\sigma}(t)F[\mathcal{M}w(t)]}{F[\mathcal{M}v(t;\sigma)]}. 
\end{equation*}
We can write $f(t)$ and $\tilde{f}(t)$ in the expansion forms, i.e. $f(t)=\sum_{\xi\in\mathcal{I}}f_\xi(t)\omega_\xi$ and $\tilde{f}(t)=\sum_{\xi\in\mathcal{I}}\tilde{f}_\xi(t)\omega_\xi$ respectively. These together with \eqref{EXPANTION u},\eqref{EXPANTION w} and \eqref{EQ:FD u-tilde u} yield that  
\begin{equation}\begin{aligned}\label{a-a}
  &\tilde{\sigma}(t)-\sigma(t)\\
  &=\frac{\sum_{\xi\in\mathcal{I}}F[\omega_\xi](\tilde{f}_\xi(t)-f_\xi(t))-(\mathcal{D}_t^\alpha\tilde{E}-\mathcal{D}_t^\alpha E)-\tilde{\sigma}(t)\sum_{\xi\in\mathcal{I}}\mu_\xi F[\omega_\xi]w_\xi(t;\tilde{\sigma})}{\sum_{\xi\in\mathcal{I}}\mu_\xi F[\omega_\xi]v_\xi(t;\sigma)}. 
\end{aligned}\end{equation}
To estimate \eqref{a-a}, let us first estimate the series $\sum_{\xi\in\mathcal{I}}\mu_\xi F[\omega_\xi]w_\xi(t;\tilde{\sigma}).$ Since $w(t;\tilde{\sigma})$ is the solution of the problem \eqref{EQ:ode w}, in view of Lemmas \ref{q_a>a ur} and \ref{q_a>a ui}, we deduce that
\begin{equation*}
    \begin{split}
    |w_\xi(t;\tilde{\sigma})|&\leq |w_\xi^h(t;\tilde{\sigma})|+|w_\xi^f(t;\tilde{\sigma})|\\
    &\leq |\tilde{h}_\xi-h_\xi| E_{\alpha,1}(-\mu_\xi m_{\tilde{\sigma}} t^\alpha)+\int_0^t |g_\xi(s)|(t-s)^{\alpha-1} E_{\alpha,\alpha}(-\mu_\xi m_{\tilde{\sigma}} (t-s)^\alpha)ds\\
    &\leq |\tilde{h}_\xi-h_\xi| E_{\alpha,1}(-\mu_\xi m_{\tilde{\sigma}} t^\alpha)\\
    &+\int_0^t |\tilde{f}_\xi(s)-f_\xi(s)|(t-s)^{\alpha-1} E_{\alpha,\alpha}(-\mu_\xi m_{\tilde{\sigma}} (t-s)^\alpha)ds\\
        &+\int_0^t |\tilde{\sigma}(s)-\sigma(s)|\mu_\xi |v_\xi(s;\sigma)| (t-s)^{\alpha-1} E_{\alpha,\alpha}(-\mu_\xi m_{\tilde{\sigma}} (t-s)^\alpha)ds,
     \end{split}
\end{equation*}
where $m_{\tilde{\sigma}}=\frac{\inf_{t\in[0,T]}\left(\sum_{\xi\in\mathcal{I}}F[\omega_\xi]\tilde{f}_\xi(t)-\mathcal{D}_t^\alpha \tilde{E}(t)\right)}{\sup_{t\in[0,T]}\left(\sum_{\xi\in\mathcal{I}}\mu_\xi F[\omega_\xi](\tilde{h}_\xi+I_t^\alpha[\tilde{f}_\xi(t)])\right)}.$ %This together with \eqref{EST: Mittag} and Lemma \ref{EST:1/Gamma} give us

Putting this into the series $\sum_{\xi\in\mathcal{I}}\mu_\xi F[\omega_\xi]w_\xi(t;\tilde{\sigma}),$ and taking into account Corollary \ref{cor:-<+}, we can get
\begin{equation*}
    \begin{split}
        &\sum_{\xi\in\mathcal{I}}\mu_\xi F[\omega_\xi]|w_\xi(t;\tilde{\sigma})|\leq \sum_{\xi\in\mathcal{I}}\mu_\xi F[\omega_\xi]|\tilde{h}_\xi-h_\xi|E_{\alpha,1}(-\mu_\xi m_{\tilde{\sigma}} t^\alpha)\\
        &+\sum_{\xi\in\mathcal{I}}F[\omega_\xi]\int_0^t|\tilde{f}_\xi(s)-f_\xi(s)|\mu_\xi(t-s)^{\alpha-1} E_{\alpha,\alpha}(-\mu_\xi m_{\tilde{\sigma}} (t-s)^\alpha)ds\\
        &+\sum_{\xi\in\mathcal{I}}\mu_\xi^2 F[\omega_\xi]\int_0^t|\tilde{\sigma}(s)-\sigma(s)|v_\xi(s;\sigma) (t-s)^{\alpha-1} E_{\alpha,\alpha}(-\mu_\xi m_{\tilde{\sigma}} (t-s)^\alpha)ds\\
        &=I_1(t)+I_2(t)+I_3(t).
        \end{split}
\end{equation*}
We estimate each of the three terms separately.

Using Corollary \ref{L:E1E2 ui} and the Cauchy-Schwarz inequality, we have
\begin{equation*}\begin{split}
I_1(t)&\leq \sum_{\xi\in\mathcal{I}}\mu_\xi F[\omega_\xi]|\tilde{h}_\xi-h_\xi|\leq \left(\sum_{\xi\in\mathcal{I}}\left|\frac{F[\omega_\xi]}{\mu_\xi^{\gamma}}\right|^2\right)^\frac{1}{2}\left(\sum_{\xi\in\mathcal{I}}\mu_\xi^{2(1+\gamma)}|\tilde{h}_\xi-h_\xi|^2\right)^\frac{1}{2}\\
&\leq C_F\left(\sum_{\xi\in\mathcal{I}}(1+\mu_\xi)^{2(1+\gamma)}|\tilde{h}_\xi-h_\xi|^2\right)^\frac{1}{2}\leq C_F\|\tilde{h}-h\|_{\mathcal{H}^{1+\gamma}}.
\end{split}\end{equation*}
Using the Cauchy-Schwarz and H\"older inequalities, we have
\begin{equation*}
    \begin{split}
        I_2(t)&\leq C_F\left[\sum_{\xi\in\mathcal{I}}\left|\mu_\xi^{\gamma}\int_0^t|\tilde{f}_\xi(s)-f_\xi(s)|\mu_\xi(t-s)^{\alpha-1} E_{\alpha,\alpha}(-\mu_\xi m_{\tilde{\sigma}} (t-s)^\alpha)ds\right|^2\right]^\frac{1}{2}\\
        &\leq C_F\biggl[\sum_{\xi\in\mathcal{I}}\left(\mu_\xi^{2\gamma}\int_0^t|\tilde{f}_\xi(s)-f_\xi(s)|^2\mu_\xi(t-s)^{\alpha-1} E_{\alpha,\alpha}(-\mu_\xi m_{\tilde{\sigma}} (t-s)^\alpha)ds\right)\\
        &\times \left(\int_0^t\mu_\xi(t-s)^{\alpha-1} E_{\alpha,\alpha}(-\mu_\xi m_{\tilde{\sigma}} (t-s)^\alpha)ds\right)\biggr]^\frac{1}{2}. 
    \end{split}
\end{equation*}
In view of \eqref{INT E inf} and Corollary \ref{L:E1E2 ur} we have
\begin{align*}
    \begin{split}
        I_2(t)&\leq C_F\biggl(\frac{1}{m_{\tilde{\sigma}}}\int_0^t\left\{\sum_{\xi\in\mathcal{I}}\mu_\xi^{2(\frac{1}{2}+\gamma)}|\tilde{f}_\xi(s)-f_\xi(s)|^2\right\}(t-s)^{\alpha-1} E_{\alpha,\alpha}(-\inf_{\xi\in\mathcal{I}}\mu_\xi m_{\tilde{\sigma}} (t-s)^\alpha)ds\biggr)^\frac{1}{2}\\
        &\leq M_1\|\tilde{f}-f\|_{C([0,T];\mathcal{H}^{\frac{1}{2}+\gamma})},
    \end{split}
\end{align*}
where $M_1=C_F\frac{1}{m_{\tilde{\sigma}}\sqrt{\inf_{\xi\in\mathcal{I}}\mu_\xi}}.$
Using Corollary \ref{L:E1E2 ur} and the Cauchy-Schwarz inequality, we have
\begin{equation*}
    \begin{split}
    I_3(t)&\leq \frac{1}{\Gamma(\alpha)}\int_0^t\left\{\sum_{\xi\in\mathcal{I}}\mu_\xi^2 F[\omega_\xi]v_\xi(s;\sigma)\right\} |\tilde{\sigma}(s)-\sigma(s)| (t-s)^{\alpha-1}ds\\
    &\leq \frac{C_F}{\Gamma(\alpha)}\int_0^t\left(\sum_{\xi\in\mathcal{I}}|\mu_\xi^{2+\gamma}v_\xi(s;\sigma)|^2\right)^\frac{1}{2} |\tilde{\sigma}(s)-\sigma(s)| (t-s)^{\alpha-1}ds\\
    &\leq \frac{C_F}{\Gamma(\alpha)}\|v\|_{C([0,T];\mathcal{H}^{2+\gamma})}\int_0^t|\tilde{\sigma}(s)-\sigma(s)| (t-s)^{\alpha-1}ds\\
    &\leq M_2\frac{1}{\Gamma(\alpha)}\int_0^t|\tilde{\sigma}(s)-\sigma(s)| (t-s)^{\alpha-1}ds,
    \end{split}
\end{equation*}
where $M_2=C_F\biggr(\frac{1}{\inf_{\xi\in\mathcal{I}}\mu_\xi}+1\biggl)^{2+\gamma}\left(\|f\|_{C([0,T];\mathcal{H}^{\frac{3}{2}+\gamma})}+\|h\|_{\mathcal{H}^{2+\gamma}}\right)$ and the last inequality is obtained by \eqref{EQ:convergence u/}.
The estimates obtained above for $I_1(t),\,I_2(t)$ and $I_3(t)$ yield   
\begin{equation*}
    \begin{split}
        &\sum_{\xi\in\mathcal{I}}\mu_\xi F[\omega_\xi]|w_\xi(t;\tilde{\sigma})|\\
    &\leq C_F\|\tilde{h}-h\|_{\mathcal{H}^{1+\gamma}}+M_1\|\tilde{f}-f\|_{C([0,T];\mathcal{H}^{\frac{1}{2}+\gamma})}\\
    &+M_2\frac{1}{\Gamma(\alpha)}\int_0^t |\tilde{\sigma}(s)-\sigma(s)|(t-s)^{\alpha-1}ds.
    \end{split}
\end{equation*}
Taking into account this and using \eqref{a-a} we obtain
\begin{align*}
  &|\tilde{\sigma}(t)-\sigma(t)|\\
  &\leq \frac{C_F\left(\sum_{\xi\in\mathcal{I}}\mu_\xi^{2\gamma}|\tilde{f}_\xi(t)-f_\xi(t)|^2\right)^\frac{1}{2}+\|\mathcal{D}_t^\alpha\tilde{E}-\mathcal{D}_t^\alpha E\|_{C[0,T]}+M_{\tilde{\sigma}}\sum_{\xi\in\mathcal{I}}\mu_\xi F[\omega_\xi]|w_\xi(t;\tilde{\sigma})|}{C_2}\\
&\leq\frac{1}{C_2}\big(C_F\|\tilde{f}-f\|_{C([0,T];\mathcal{H}^{\gamma})}+\|\mathcal{D}_t^\alpha\tilde{E}-\mathcal{D}_t^\alpha E\|_{C[0,T]}\\
&+M_{\tilde{\sigma}}C_F\|\tilde{h}-h\|_{\mathcal{H}^{1+\gamma}}+M_{\tilde{\sigma}}M_1\|\tilde{f}-f\|_{C([0,T];\mathcal{H}^{\frac{1}{2}+\gamma})}\\
    &+M_{\tilde{\sigma}}M_2\frac{1}{\Gamma(\alpha)}\int_0^t |\tilde{\sigma}(s)-\sigma(s)|(t-s)^{\alpha-1}ds\big)\\
&\leq M_3\big(\|\tilde{h}-h\|_{\mathcal{H}^{2+\gamma}}+\|\tilde{f}-f\|_{C([0,T];\mathcal{H}^{\frac{3}{2}+\gamma})}+\|\tilde{E}-E\|_{X^\alpha[0,T]}\\
&+\frac{1}{\Gamma(\alpha)}\int_0^t |\tilde{\sigma}(s)-\sigma(s)|(t-s)^{\alpha-1}ds\big),
\end{align*}
where $M_{\tilde{\sigma}}=\frac{\sup_{t\in[0,T]}\left(\sum_{\xi\in\mathcal{I}}F[\omega_\xi]\tilde{f}_\xi(t)-\mathcal{D}_t^\alpha \tilde{E}(t)\right)}{\inf_{\xi\in\mathcal{I}}\mu_\xi \cdot \inf_{t\in[0,T]}\tilde{E}(t)}$ and $M_3$ is some positive constant.
Applying Lemma \ref{L:Gronwall} to $|\tilde{\sigma}(t)-\sigma(t)|$, we have
\begin{equation*}
    \begin{split}
        |\tilde{\sigma}(t)-\sigma(t)|&\leq M_3 E_{\alpha,1}\left(M_3 t^\alpha\right)\big(\|\tilde{h}-h\|_{\mathcal{H}^{2+\gamma}}\\&+\|\tilde{f}-f\|_{C([0,T];\mathcal{H}^{\frac{3}{2}+\gamma})}+\|\tilde{E}-E\|_{X^\alpha[0,T]}\big).
    \end{split}
\end{equation*}
Then
\begin{equation}\label{cndp a}
        \|\tilde{\sigma}-\sigma\|_{C[0,T]}\leq M_4\big(\|\tilde{h}-h\|_{\mathcal{H}^{2+\gamma}}+\|\tilde{f}-f\|_{C([0,T];\mathcal{H}^{\frac{3}{2}+\gamma})}+\|\tilde{E}-E\|_{X^\alpha[0,T]}\big),
\end{equation}
where $M_4=M_3 E_{\alpha,1}\left(M_3 T^\alpha\right).$

Now, we are going to estimate the difference $\tilde{v}-v.$

In view of Lemma \ref{q_a>a ur}, we have
\begin{equation}
    \begin{split}\label{cndp 0a}
\|w^f(t;\tilde{\sigma})\|^2_{\mathcal{H}}&=\sum_{\xi\in\mathcal{I}}|w_\xi^f(t;\tilde{\sigma})|^2\\&=\sum_{\xi\in\mathcal{I}}\left|\int_0^t |g_\xi(\tau)|(t-\tau)^{\alpha-1}E_{\alpha,\alpha}(-\mu_\xi m_{\tilde{\sigma}} (t-\tau)^\alpha)d\tau\right|^2\\
&\leq \sum_{\xi\in\mathcal{I}}\left|\int_0^t |g_\xi(\tau)|(t-\tau)^{\alpha-1}E_{\alpha,\alpha}(-\inf_{\xi\in\mathcal{I}}\mu_\xi m_{\tilde{\sigma}} (t-\tau)^\alpha)d\tau\right|^2,
\end{split}\end{equation}
where the last inequality is obtained by using the Corollary \ref{L:E1E2 ur}. Using the H\"older inequality we get
\begin{equation}\begin{aligned}
    \label{cndp 1a}
&\sum_{\xi\in\mathcal{I}}\left|\int_0^t |g_\xi(\tau)|(t-\tau)^{\alpha-1}E_{\alpha,\alpha}(-\inf_{\xi\in\mathcal{I}}\mu_\xi m_{\tilde{\sigma}} (t-\tau)^\alpha)d\tau\right|^2\\
&\leq \sum_{\xi\in\mathcal{I}}\left(\int_0^t |g_\xi(\tau)|^2(t-\tau)^{\alpha-1}E_{\alpha,\alpha}(-\inf_{\xi\in\mathcal{I}}\mu_\xi m_{\tilde{\sigma}} (t-\tau)^\alpha)d\tau\right)\\
&\times \left(\int_0^t (t-\tau)^{\alpha-1}E_{\alpha,\alpha}(-\inf_{\xi\in\mathcal{I}}\mu_\xi m_{\tilde{\sigma}} (t-\tau)^\alpha)d\tau\right)\\
    &\leq \left(\int_0^t \left\{\sum_{\xi\in\mathcal{I}}|g_\xi(\tau)|^2\right\}(t-\tau)^{\alpha-1}E_{\alpha,\alpha}(-\inf_{\xi\in\mathcal{I}}\mu_\xi m_{\tilde{\sigma}} (t-\tau)^\alpha)d\tau\right)
\end{aligned}\end{equation}
\begin{equation*}\begin{aligned}
&\times \left(\int_0^t (t-\tau)^{\alpha-1}E_{\alpha,\alpha}(-\inf_{\xi\in\mathcal{I}}\mu_\xi m_{\tilde{\sigma}} (t-\tau)^\alpha)d\tau\right)\\
&\leq \|g\|_{C([0,T];\mathcal{H})}^2\left(\int_0^t(t-\tau)^{\alpha-1}E_{\alpha,\alpha}(-\inf_{\xi\in\mathcal{I}}\mu_\xi m_{\tilde{\sigma}} (t-\tau)^\alpha)d\tau\right)^2\\
&\leq \left(\frac{1}{\inf_{\xi\in\mathcal{I}}\mu_\xi m_{\tilde{\sigma}}}\right)^2\|g\|_{C([0,T];\mathcal{H})}^2,
\end{aligned}\end{equation*}
where the last inequality is obtained by using the estimate \eqref{INT E}.
Substituting \eqref{cndp 1a} into \eqref{cndp 0a}, we obtain
$$\|w^f(t;\tilde{\sigma})\|^2_{\mathcal{H}}\leq \left(\frac{1}{\inf_{\xi\in\mathcal{I}}\mu_\xi m_{\tilde{\sigma}}}\right)^2\|g\|_{C([0,T];\mathcal{H})}^2=M_{5}^2\|g\|_{C([0,T];\mathcal{H})}^2,$$
which gives
\begin{equation}\label{cndp 2a}
    \|w^f\|_{C([0,T];\mathcal{H})}\leq M_{5} \|g\|_{C([0,T];\mathcal{H})},
\end{equation}
where $M_5=\frac{1}{\inf_{\xi\in\mathcal{I}}\mu_\xi m_{\tilde{\sigma}}}.$ Using that $g(t)=\tilde{f}(t)-f(t)-(\tilde{\sigma}(t)-\sigma(t))\mathcal{M}v(t;\sigma)$  and taking into account the properties of the norm, we get from \eqref{cndp 2a}
\begin{equation}\begin{split}\label{cndp 3a}
    \|w^f\|_{C([0,T];\mathcal{H})}&\leq M_{5} \|g\|_{C([0,T];\mathcal{H})}\\
    &\leq M_{5}\big(\|\tilde{f}-f\|_{C([0,T];\mathcal{H})}+\|\tilde{\sigma}-\sigma\|_{C[0,T]}\|\mathcal{M}v\|_{C([0,T];\mathcal{H})}\big)\\
    &\leq M_{6}\big(\|\tilde{f}-f\|_{C([0,T];\mathcal{H})}+\|\tilde{\sigma}-\sigma\|_{C[0,T]}\|v\|_{C([0,T];\mathcal{H}^1)}\big).
  \end{split}  
\end{equation}
In view of $v(t;\sigma)=v^h(t;\sigma)+v^f(t;\sigma)$ and using \eqref{cndp ur} and \eqref{cndp ui}, we have
\begin{equation}\begin{split}\label{cndp ua}
\|v\|_{C([0,T];\mathcal{H}^1)}&\leq \|v^h\|_{C([0,T];\mathcal{H}^1)}+\|v^f\|_{C([0,T];\mathcal{H}^1)}\\
&\leq M_7\big(\|h\|_{\mathcal{H}^1}+\|f\|_{C([0,T];\mathcal{H}^\frac{1}{2})}\big).
\end{split}\end{equation}
Substituting \eqref{cndp ua} and \eqref{cndp a} into \eqref{cndp 3a}, we get
\begin{equation}\begin{split}\label{cndp 4a}
\|w^f\|_{C([0,T];\mathcal{H})}&\leq M_{8}\big(\|\tilde{h}-h\|_{\mathcal{H}^{2+\gamma}}+\|\tilde{f}-f\|_{C([0,T];\mathcal{H}^{\frac{3}{2}+\gamma})}+\|\tilde{E}-E\|_{X^\alpha[0,T]}\big).
\end{split}\end{equation}
Using Lemma \ref{q_a>a ui} and estimate \eqref{EST: Mittag 0}, we have
\begin{equation*}
    \begin{split}
        \|w^h(t;\tilde{\sigma})\|_{\mathcal{H}}^2&\leq \sum_{\xi\in\mathcal{I}}|\tilde{h}_\xi-h_\xi|^2|E_{\alpha,1}(-\mu_\xi m_{\tilde{\sigma}} t^\alpha)|^2\\
        &\leq \sum_{\xi\in\mathcal{I}}|\tilde{h}_\xi-h_\xi|^2=\|\tilde{h}-h\|_\mathcal{H}^2,
    \end{split}
\end{equation*}
which gives
\begin{equation}\label{cndp 5a}
    \|w^h\|_{C([0,T];\mathcal{H})}\leq \|\tilde{h}-h\|_\mathcal{H}.
\end{equation}
In view of $\tilde{v}(t;\tilde{\sigma})-v(t;\sigma)=w(t;\tilde{\sigma})=w^h(t;\tilde{\sigma})+w^f(t;\tilde{\sigma})$ the estimates \eqref{cndp 4a} and \eqref{cndp 5a} give us
\begin{align*}
    \|\tilde{v}-v\|_{C([0,T];\mathcal{H})}&\leq M_{9}\big(\|\tilde{h}-h\|_{\mathcal{H}^{2+\gamma}}+\|\tilde{f}-f\|_{C([0,T];\mathcal{H}^{\frac{3}{2}+\gamma})}+\|\tilde{E}-E\|_{X^\alpha[0,T]}\big).
\end{align*}
This completes the proof.
\end{proof}

\subsubsection{Uniqueness of $(\sigma,v)$}\label{S:uniqueness} In this subsection, we prove the uniqueness of the generalized solution $(\sigma,v).$

\begin{thm}\label{uniqueness (a,v)}
    Let Assumption \ref{A:INV} hold. Then there is unique generalized solution $(\sigma,v)$ of Problem \ref{P:DIP}.
\end{thm}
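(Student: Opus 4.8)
The plan is to obtain uniqueness as an immediate consequence of the continuous dependence estimates established in Theorem~\ref{continuous}, following the strategy announced in the introduction. So I would let $(\sigma,v)$ and $(\tilde\sigma,\tilde v)$ be two generalized solutions of Problem~\ref{P:DIP} corresponding to \emph{one and the same} data set $\Upsilon=\tilde\Upsilon=\{h,f,E\}$ satisfying Assumption~\ref{A:INV}, and then show that they must coincide.

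Before invoking Theorem~\ref{continuous}, I would record the (essentially already established) fact that the coefficient of an \emph{arbitrary} generalized solution lies in the domain $D$: acting by $F$ and then by $\mathcal{D}_t^\alpha$ on the additional condition \eqref{CON:ADD}, exactly as in subsection~\ref{S:existence}, shows that any such $\sigma$ solves the operator equation \eqref{EQ:OPerator}, and then the bounds \eqref{C0<C1} and \eqref{C2<C3} force
$$0<\frac{C_0}{C_3}\le\sigma(t)\le\frac{C_1}{C_2},\qquad t\in[0,T].$$
In particular $\sigma,\tilde\sigma\in\mathcal{C}^+[0,T]$, so by Theorem~\ref{Th:DEU} the associated $v$ and $\tilde v$ are the unique generalized solutions of the corresponding direct problems \eqref{EQ:Frac Pseudo}--\eqref{CON:IN}, and the representations of Lemmas~\ref{q_a>a ur} and~\ref{q_a>a ui} together with the a priori bound \eqref{EQ:convergence u/}, which are used in the proof of Theorem~\ref{continuous}, are indeed available for this pair of solutions.

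Then I would apply Theorem~\ref{continuous} with $\tilde\Upsilon=\Upsilon$. Since $\tilde h-h=0$ in $\mathcal{H}^{2+\gamma}$, $\tilde f-f=0$ in $C([0,T];\mathcal{H}^{\frac32+\gamma})$ and $\tilde E-E=0$ in $X^\alpha[0,T]$, the right-hand sides of \eqref{a-b} and \eqref{u-v} vanish, hence $\|\tilde\sigma-\sigma\|_{C[0,T]}=0$ and $\|\tilde v-v\|_{C([0,T];\mathcal{H})}=0$. Therefore $\sigma\equiv\tilde\sigma$ on $[0,T]$ and $v\equiv\tilde v$ in $C([0,T];\mathcal{H})$, which is the asserted uniqueness; combined with the existence statement of Theorem~\ref{existence (a,v)}, this also completes the proof of Theorem~\ref{Th:main theorem}.

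The only point requiring any care is the preliminary observation that every generalized solution — not merely the fixed point produced by Schauder's theorem — is subject to the bounds defining $D$, i.e. that the reduction of Problem~\ref{P:DIP} to the operator equation \eqref{EQ:OPerator} is legitimate for an arbitrary solution. Once this is in place, no further estimation is needed and the conclusion follows verbatim from the Gronwall-based Lipschitz estimate \eqref{a-b}; there is no genuine obstacle beyond bookkeeping.
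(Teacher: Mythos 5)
Your proposal is correct and follows essentially the same route as the paper: the paper's proof simply takes two solutions with identical data, applies the continuous-dependence estimates \eqref{a-b} and \eqref{u-v} of Theorem~\ref{continuous}, and concludes $\sigma=\tilde\sigma$, $v=\tilde v$. Your additional preliminary remark that any generalized solution satisfies the bounds defining $D$ (via the reduction to \eqref{EQ:OPerator} and \eqref{C0<C1}, \eqref{C2<C3}) is a sound piece of bookkeeping that the paper leaves implicit, but it does not change the argument.
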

\begin{proof}
Let $(\sigma,v)$ and $(\tilde{\sigma},\tilde{v})$ be two different solutions of the Problem \ref{P:DIP}, that is,
\begin{equation}\label{EQ:(a,v)}
  \begin{cases}
  &\mathcal{D}_t^\alpha v(t;\sigma)+\sigma(t)\mathcal{M}v(t;\sigma)=f(t),\\
  &v(0;\sigma)=h,\\
  &F[v(t;\sigma)]=E(t),\\
  \end{cases}
\end{equation}
and
\begin{equation}\label{EQ:(b,v)}
\begin{cases}
  &\mathcal{D}_t^\alpha \tilde{v}(t;\tilde{\sigma})+\tilde{\sigma}(t)\mathcal{M}\tilde{v}(t;\tilde{\sigma})=f(t),\\
  &\tilde{v}(0;\tilde{\sigma})=h,\\
  &F[\tilde{v}(t;\tilde{\sigma})]=E(t),
  \end{cases}
\end{equation}
respectively. Then in view of the estimates \eqref{a-b} and \eqref{u-v}, we obtain
\begin{equation}\label{a-b zero}
    \|\tilde{\sigma}-\sigma\|_{C[0,T]}\leq 0,
\end{equation}
\begin{equation}\label{u-v zero}
  \|\tilde{v}-v\|_{C([0,T];\mathcal{H})}\leq 0.  
\end{equation}
The estimates \eqref{a-b zero} and \eqref{u-v zero} imply $\sigma=\tilde{\sigma}$ and $v=\tilde{v},$ respectively. The uniqueness of the generalized solution $(\sigma,v)$ is proved.
\end{proof}

\begin{proof}[Proof of Theorem \ref{Th:main theorem}]In view of Theorem \ref{thm:MainThmDP}, Theorems \ref{existence (a,v)}, \ref{uniqueness (a,v)} allow us to conclude the existence and uniqueness of solution $(\sigma,v)$ of Problem \ref{P:DIP}. This with Theorem \ref{continuous} gives us a well-posedness of Problem \ref{P:DIP}. Thus, completes the proof of Theorem \ref{Th:main theorem}.
\end{proof}

\section{Examples of operator $\mathcal{M}$}\label{S:L}

In this section, we give several examples of the settings in which our direct and inverse problems are applicable. Of course, there are many other examples; here we collect those for which different types of partial differential equation are of particular importance.

\begin{itemize}
  \item {\bf Sturm-Liouville problem.}\\
  First, we describe the setting of the Sturm-Liouville operator. Let $\mathcal{M}$ be the ordinary second-order differential operator in $L^2(a,b)$ generated by the differential expression
\begin{equation}\label{SL} \mathcal{M}u=-u''(x),\,\,a<x<b,\end{equation} and one of the boundary conditions \begin{equation}\label{SL_B} a_1u'(b)+b_1u(b)=0,\,a_2u'(a)+b_2u(a)=0,\end{equation} or \begin{equation}\label{SL_B1} u(a)=\pm u(b),\,u'(a)=\pm u'(b),\end{equation} where $a^2_1+a^2_2>0,\,b_1^2+b_2^2>0$ and $\alpha_j,\, \beta_j,\,j=1,2,$ are some real numbers.

It is known (\cite{Naimark}) that the Sturm-Liouville problem for equation \eqref{SL} with boundary conditions \eqref{SL_B} or with boundary conditions \eqref{SL_B1} is self-adjoint in $L^2(a,b).$ It is also known that the self-adjoint problem has real eigenvalues and their eigenfunctions form a complete orthonormal basis in $L^2(a,b).$
\end{itemize}

\begin{itemize}
  \item {\bf Differential operator with involution.}\\
As a next example, we consider the differential operator with involution in $L^2(0,\pi)$ generated by the expression
\begin{equation}\label{DOI} \mathcal{M}u=u''(x)-\varepsilon u''(\pi-x),\,\,0<x<\pi,\end{equation} and homogeneous Dirichlet conditions \begin{equation}\label{DOI_B} u(0)=0,\,u(\pi)=0,\end{equation} where $|\varepsilon|<1$ is some real number.

The non-local functional-differential operator \eqref{DOI}-\eqref{DOI_B} is self-adjoint \cite{torebek}. For $|\varepsilon|<1,$ the operator \eqref{DOI}-\eqref{DOI_B} has the following eigenvalues
\begin{align*}\mu_{2k}=4(1+\varepsilon)k^2,\,k\in \mathbb{N},\,\,\, \textrm{and} \,\,\, \mu_{2k+1}=(1-\varepsilon)(2k+1)^2,\,k\in \mathbb{N}\cup\{0\},\end{align*}
and corresponding eigenfunctions
\begin{align*}&u_{2k}(x)=\sqrt{\frac{2}{\pi}}\sin{2kx},\,k\in \mathbb{N},\\& u_{2k+1}(x)=\sqrt{\frac{2}{\pi}}\sin{(2k+1)x},\,k\in \mathbb{N}\cup \{0\}.\end{align*}
\end{itemize}

\begin{itemize}
  \item {\bf Fractional Sturm-Liouville operator.}\\
We consider the operator generated by the integro-differential expression
\begin{equation}\label{FSL} \mathcal{M}u=\mathcal{D}_{a+}^\alpha D_{b-}^{\alpha}u,\,a<x<b,\end{equation}
and the conditions \begin{equation}\label{FSL_B}I_{b-}^{1-\alpha}u(a)=0,\,I_{b-}^{1-\alpha}u(b)=0,\end{equation} where
$\mathcal{D}_{a+}^\alpha$ is the left Caputo derivative of order $\alpha  \in
\left( {1/2,1} \right],$  $D_{b-}^\alpha$ is the right Riemann-Liouville derivative of order $\alpha \in
\left( {1/2,1} \right]$ and $I_{b-}^\alpha $ is the right Riemann-Liouville integral of order $\alpha \in
\left( {1/2,1} \right]$ (see \cite{KST06}).
The fractional Sturm-Liouville operator \eqref{FSL}-\eqref{FSL_B} is self-adjoint and positive in $L^2 (a, b)$ (see \cite{TT16}). The spectrum of the fractional Sturm-Liouville operator \eqref{FSL}-\eqref{FSL_B} is
discrete, positive, and real-valued, and the system of eigenfunctions is a complete orthogonal basis in $L^2 (a, b).$ For more properties of the operator generated by the problem \eqref{FSL}-\eqref{FSL_B} we refer to \cite{TT18, TT19}.
\end{itemize}

\begin{itemize}
    \item {\bf Second order elliptic operator $\mathcal{M}.$}\\
    Let $\Omega$ be an open bounded domain in $\mathbb{R}^d\, (d\geq 1)$ with a smooth boundary (for example, of $C^\infty$ class).

Let $L^2(\Omega)$ be the usual $L^2$-space with the inner product $(\cdot, \cdot)$ and %$H^1_0(\Omega),\,H^2(\Omega)$ denote the Sobolev spaces (see \cite{A75}). Let 
let $\mathcal{M}$ be the elliptic operator defined for $g\in \mathcal{D}(\mathcal{M}):=H^2(\Omega)\cap H^1_0(\Omega)$ as
$$
\mathcal{M}g(x)=-\sum_{i,j=1}^{d}\partial_j(a_{ij}(x)\partial_j g(x))+c(x)g(x),\;x\in \Omega,
$$
with Dirichlet boundary condition
$$g(x)=0,\;x\in \partial \Omega,$$
where $a_{ij}=a_{ji}\,(1\leq i,j\leq d)$ and $c\geq 0$ in $\overline{\Omega}$. Moreover, assume that $a_{ij}\in C^1(\overline{\Omega}),\,c\in C(\overline{\Omega}),$ and there exists a constant $\delta>0$ such that 
$$
\delta\sum_{i=1}^d \xi_i^2\leq \delta\sum_{i=1}^d a_{ij}(x)\xi_i\xi_j, \; \forall x\in\overline{\Omega}, \,\forall (\xi_1,...,\xi_d)\in\mathbb{R}^d. 
$$
Then the elliptic operator $\mathcal{M}$ has the eigensystem $\{\mu_n,\omega_n\}_{n=1}^\infty$ such that $0<\mu_1\leq\mu_2\leq \cdots ,\mu_n\rightarrow\infty$ as $n\rightarrow\infty$ and $\{\omega_n\}_{n=1}^\infty$ forms an orthonormal basis of $L^2(\Omega)$ (\cite{SY11}).
\end{itemize}

\begin{itemize}
  \item {\bf Harmonic oscillator.}\\
For any dimension $d\geq1$, let us consider the harmonic oscillator,
$$
\mathcal{M}:=-\Delta+|x|^{2}, \,\,\, x\in\mathbb R^{d}.
$$
The operator $\mathcal{M}$ is an essentially self-adjoint operator on $C_{0}^{\infty}(\mathbb R^{d})$. It has a discrete spectrum, consisting of the eigenvalues
$$
\mu_{k}=\sum_{j=1}^{d}(2k_{j}+1), \,\,\, k=(k_{1}, \cdots, k_{d})\in\mathbb N^{d},
$$
with the corresponding eigenfunctions
$$
\varphi_{k}(x)=\prod_{j=1}^{d}P_{k_{j}}(x_{j}){\rm e}^{-\frac{|x|^{2}}{2}},
$$
which form an orthogonal basis in $L^{2}(\mathbb R^{d})$. Here $P_{l}(\cdot)$ is the $l$--th order Hermite polynomial,
$$
P_{l}(\xi)=a_{l}{\rm e}^{\frac{|\xi|^{2}}{2}}\left(x-\frac{d}{d\xi}\right)^{l}{\rm e}^{-\frac{|\xi|^{2}}{2}},
$$
where $\xi\in\mathbb R$, and
$$
a_{l}=2^{-l/2}(l!)^{-1/2}\pi^{-1/4}.
$$

\end{itemize}

\begin{itemize}
  \item {\bf Anharmonic oscillator.}\\
Another class of examples are anharmonic oscillators (see for instance
\cite{HR82}), which are operators on $L^2(\mathbb{R})$ of the form
$$
\mathcal{M}:=-\frac{d^{2k}}{dx^{2k}} +x^{2l}+p(x), \,\,\, x\in\mathbb R,
$$
for integers $k,l\geq 1$ and with $p(x)$ being a polynomial of degree $\leq 2l-1$ with real coefficients. More general case on $\mathbb{R}^n$ where a prototype operator is of the form
$$
\mathcal{M}:=-(\Delta )^k +|x|^{2l},
$$
where $k, l$ are integers $\geq 1$ see \cite{ChDR21}. 
\end{itemize}

\begin{itemize}
  \item {\bf Landau Hamiltonian in 2D.}\\
The next example is one of the simplest and most interesting models of Quantum Mechanics, that is, the Landau Hamiltonian.

The Landau Hamiltonian in 2D is given by
\begin{equation*} \label{eq:LandauHamiltonian}
\mathcal{M}:=\frac{1}{2}\left(\left(i\frac{\partial}{\partial x}-B
y\right)^{2}+\left(i\frac{\partial}{\partial y}+B x\right)^{2}\right),
\end{equation*}
acting on the Hilbert space $L^{2}(\mathbb R^{2})$, where $B>0$ is some constant. The spectrum of
$\mathcal{M}$ consists of infinite number of eigenvalues (see \cite{F28, L30}) with infinite multiplicity, of the form
\begin{equation*} \label{eq:HamiltonianEigenvalues}
\mu_{n}=(2n+1)B, \,\,\, n=0, 1, 2, \dots \,,
\end{equation*}
and the corresponding system of eigenfunctions (see \cite{ABGM15, HH13})
{\small
\begin{equation*}
\label{eq:HamiltonianBasis} \left\{
\begin{split}
e^{1}_{k, n}(x,y)&=\sqrt{\frac{n!}{(n-k)!}}B^{\frac{k+1}{2}}\exp\Big(-\frac{B(x^{2}+y^{2})}{2}\Big)(x+iy)^{k}L_{n}^{(k)}(B(x^{2}+y^{2})), \,\,\, 0\leq k, {}\\
e^{2}_{j, n}(x,y)&=\sqrt{\frac{j!}{(j+n)!}}B^{\frac{n-1}{2}}\exp\Big(-\frac{B(x^{2}+y^{2})}{2}\Big)(x-iy)^{n}L_{j}^{(n)}(B(x^{2}+y^{2})), \,\,\, 0\leq j,
\end{split}
\right.
\end{equation*}}
where $L_{n}^{(\alpha)}$ are the Laguerre polynomials given by
$$
L^{(\alpha)}_{n}(t)=\sum_{k=0}^{n}(-1)^{k}C_{n+\alpha}^{n-k}\frac{t^{k}}{k!}, \,\,\, \alpha>-1.
$$

\end{itemize}

\begin{itemize}
  \item {\bf The restricted fractional Laplacian.}\\
  On the other hand, one can define a fractional Laplacian operator by using the integral representation in terms of hypersingular kernels,
  $$\left(-\Delta_{\mathbb{R}^n}\right)^sg(x)=C_{d,s}\, \textrm{P.V.} \int\limits_{\mathbb{R}^n}\frac{g(x)-g(\xi)}{|x-\xi|^{n+2s}}d\xi,$$ where $s\in (0,1).$

  In this case, we realize the zero Dirichlet condition by restricting the operator to act only on functions that are zero outside of the bounded domain $\Omega\subset\mathbb{R}^n.$ Caffarelli and Siro \cite{CS17} called the operator defined in such a way as the restricted fractional Laplacian $\left(-\Delta_{\Omega}\right)^s.$ Such, $\left(-\Delta_{\Omega}\right)^s$ is a self-adjoint operator in $L^2(\Omega),$ with a discrete spectrum $\mu_{s,k}>0,\,\,k\in \mathbb{N}.$ The corresponding set of eigenfunctions $\{V_{s,k}(x)\}_{k\in \mathbb{N}},$ normalized in $L^2(\Omega),$ gives an orthonormal basis.
\end{itemize}

\section{Examples of functional $F$}\label{S:F}

In this section, as an illustration, we give several examples of the functional $F.$ Of course, there are many other examples, but here we only collect some of them.
\begin{itemize}
    \item {\bf The measurement is the total energy output from the body.}\\
    $$F[v(t,\cdot)]:=\int_\Omega v(x,t)dx,\;t\in[0,T],$$
    where $\Omega$ is a bounded subset of $\mathbb{R}^d (d\geq 1).$
\end{itemize}
\begin{itemize}
    \item {\bf Measurement at an internal point.}\\
    $$F[v(t,\cdot)]:=v(t,x^*),\;t\in[0,T],\;x^*\in \Omega,$$
    where $\Omega$ is an open bounded subset of $\mathbb{R}^d (d\geq 1).$
\end{itemize}
\begin{itemize}
    \item {\bf The measurement is the normal derivative of $v$ at one of the boundary points.}\\
    $$F[v(t,\cdot)]:=\frac{\partial v}{\partial \vec{n}}(t,x^*),\;t\in[0,T],\;x^*\in \partial\Omega,$$
    where $\Omega$ is open bounded subset of $\mathbb{R}^d (d\geq 1).$
\end{itemize}

\section{Value of $\gamma$ in particular cases of $\mathcal{M}$ and ${F}$}\label{S:gamma}

In this section, for the particular cases of $\mathcal{M}$ and ${F}$ we show how to find the value of $\gamma$ in \eqref{gamma F}.

Let $\mathcal{H}$ be $L^2(0,1)$ and let $\mathcal{M}$ be particular case of \textbf{Sturm-Liouville problem}, for example, $\mathcal{M}v=-v_{xx},\; x\in (0,1),$ with homogeneous Dirichlet boundary condition. Then the operator has the eigensystem $\{k^2, \sqrt{2}\sin{k\pi x}\}_{k\in\mathbb{N}}.$ 
\begin{itemize}
    \item Let $F$ be \textbf{the measurement is the total energy output from the body}, that is, $F[v(t,\cdot)]:=\int_{0}^1 v(t,x)dx.$ 

Since 
\begin{equation*}F[\omega_k]=\int_0^1 \sqrt{2}\sin{k\pi x}dx=\frac{1+(-1)^{k+1}}{k\pi}=\begin{cases}
    &\frac{2\sqrt{2}}{k\pi},\;\text{if},\;k=2n-1\;(n\in\mathbb{N});\\
    &0, \;\text{if},\;k=2n\;(n\in\mathbb{N}),
\end{cases}\end{equation*} 
we have $$\sum_{k\in\mathbb{N}} |F[\omega_k]|^2<\infty.$$ From this we see that $\gamma$ in \eqref{gamma F} can be taken to be $0.$
\item Let $F$ be \textbf{the measurement at an internal point}, for example, $F[v(t,\cdot)]=v(t,\frac{1}{2}).$
Then we see that
$$
F[\omega_k]=\sqrt{2}\sin{\frac{k\pi}{2}}=\begin{cases}
     &-\sqrt{2},\;\; \text{if},\;  k=4n-1,\;(n\in \mathbb{N});  \\
     & 0,\;\; \text{if},\;  k=2n,\;\;(n\in \mathbb{N});  \\
     & \sqrt{2},\;\; \text{if},\;  k=4n-3,\;\;(n\in \mathbb{N}).
\end{cases}
$$
From this we have
$$\sum_{k\in\mathbb{N}} \frac{|F[\omega_\xi]|^2}{\mu_k}<\infty.$$ From this we see that $\gamma$ in \eqref{gamma F} can be taken to be $\frac{1}{2}.$
\item Let $F$ be \textbf{the measurement is the normal derivative of $v$ at one of the boundary points}, for example, $F[v(t,\cdot)]=u_x(t,1).$ Since
$$F[\omega_k]=\sqrt{2}k\pi \cos{k\pi}=\begin{cases}
    &-\sqrt{2}k\pi,\;\;\text{if},\;k=2n-1,\;(n\in \mathbb{N});  \\
    &\sqrt{2}k\pi,\;\;\text{if},\;k=2n,\;\;(n\in \mathbb{N}),
\end{cases}$$
we have $$\sum_{k\in\mathbb{N}} \left|\frac{F[\omega_k]}{\mu_k}\right|^2<\infty.$$ From this we see that $\gamma$ in \eqref{gamma F} can be taken to be $1.$
\end{itemize}

\section{Appendix A}

In this section, we record several classical theorems from functional analysis used in this paper.

\begin{thm}\label{Th:BF}\cite[Theorem 1.9]{KST06}[Banach fixed point theorem] Let $X$ be a Banach space and let $A:X\rightarrow X$ be the map such that
$$\|Au-Av\|\leq\beta \|u-v\|\;\;(0 <\beta <1)$$
holds for all $u,\,v\in X.$ Then the operator $A$ has a unique fixed point $u^*\in X$ that is, $Au^*=u^*.$    
\end{thm}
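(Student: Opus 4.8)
The plan is to prove this by the standard Picard iteration (successive approximations). First I would fix any point $u_0\in X$ and define the sequence $(u_n)_{n\geq 0}$ recursively by $u_{n+1}=Au_n$. Applying the contraction estimate repeatedly gives $\|u_{n+1}-u_n\|\leq\beta\|u_n-u_{n-1}\|\leq\cdots\leq\beta^n\|u_1-u_0\|$, so the distances between consecutive iterates decay geometrically.

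The next step is to show that $(u_n)$ is a Cauchy sequence: for $m>n$, the triangle inequality together with the geometric bound above yields
$$\|u_m-u_n\|\leq\sum_{k=n}^{m-1}\|u_{k+1}-u_k\|\leq\frac{\beta^n}{1-\beta}\|u_1-u_0\|,$$
and the right-hand side tends to $0$ as $n\to\infty$ because $0<\beta<1$. Since $X$ is complete, $(u_n)$ converges to some $u^*\in X$. To identify $u^*$ as a fixed point, I would use that the contraction hypothesis makes $A$ Lipschitz, hence continuous; therefore letting $n\to\infty$ in the relation $u_{n+1}=Au_n$ gives $u^*=Au^*$.

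Finally, for uniqueness, suppose $u^*$ and $\tilde{u}^*$ are both fixed points. Then $\|u^*-\tilde{u}^*\|=\|Au^*-A\tilde{u}^*\|\leq\beta\|u^*-\tilde{u}^*\|$, whence $(1-\beta)\|u^*-\tilde{u}^*\|\leq 0$; since $1-\beta>0$ this forces $u^*=\tilde{u}^*$. The argument is entirely classical, so there is no genuine obstacle: the only point requiring a little care is correctly summing the geometric tail in the Cauchy estimate, and the (trivial) observation that a contraction is continuous, which is needed to pass to the limit.
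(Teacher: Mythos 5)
Your proof is correct and is the standard Picard iteration argument; the paper itself does not prove this theorem but only cites it as a classical result from \cite[Theorem 1.9]{KST06}, where essentially the same argument is given. Nothing further is needed.
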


\begin{thm}\label{Th:AA}\cite[Theorem 1.8]{KST06}[Arzel\'a-Ascoli theorem]
A necessary and sufficient condition that a subset of continuous functions $U,$ which are defined on the closed interval $[a,b],$ be relatively compact in $C[a,b]$ is that this subset be uniformly bounded and equicontinuous.
\end{thm}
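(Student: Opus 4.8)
The plan is to prove both implications of the equivalence separately, using that a subset $U$ of the complete metric space $C[a,b]$ is relatively compact precisely when it is sequentially precompact, i.e., every sequence drawn from $U$ has a subsequence converging uniformly on $[a,b]$ (the limit need not lie in $U$).

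\textbf{Necessity.} First I would assume $U$ is relatively compact, so its closure $\overline{U}$ is compact in $C[a,b]$ and hence totally bounded. Covering $\overline{U}$ by finitely many balls of radius $1$ yields at once a uniform bound for $\sup_{x\in[a,b]}|f(x)|$ over $f\in U$, which is uniform boundedness. For equicontinuity, fix $\varepsilon>0$, cover $\overline{U}$ by balls $B(f_1,\varepsilon/3),\dots,B(f_N,\varepsilon/3)$, invoke the uniform continuity of each $f_i$ on the compact interval $[a,b]$ to obtain a common $\delta>0$ with $|f_i(x)-f_i(y)|<\varepsilon/3$ whenever $|x-y|<\delta$, and then for arbitrary $f\in U$ choose $i$ with $\|f-f_i\|_{C[a,b]}<\varepsilon/3$ and conclude $|f(x)-f(y)|<\varepsilon$ by the triangle inequality. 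Since this $\delta$ does not depend on $f$, equicontinuity follows.

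\textbf{Sufficiency.} Conversely, assume $U$ is uniformly bounded and equicontinuous and take any sequence $(f_n)\subset U$; I would extract a uniformly convergent subsequence by a diagonal argument. Let $\{q_k\}_{k\in\mathbb{N}}$ be a countable dense subset of $[a,b]$. Since $(f_n(q_1))_n$ is bounded, extract a subsequence convergent at $q_1$; from it extract a further subsequence convergent at $q_2$; iterating and passing to the diagonal gives a subsequence $(g_j)$ that converges at every $q_k$. Equicontinuity then upgrades pointwise convergence on the dense set to uniform convergence: given $\varepsilon>0$, choose $\delta$ from equicontinuity for the value $\varepsilon/3$, cover $[a,b]$ by finitely many intervals of length less than $\delta$ and select one point $q_{k_1},\dots,q_{k_m}$ of $\{q_k\}$ in each, pick $J$ so large that $|g_i(q_{k_\ell})-g_j(q_{k_\ell})|<\varepsilon/3$ for all $\ell$ and all $i,j\geq J$, and for any $x\in[a,b]$ choose $q_{k_\ell}$ within $\delta$ of $x$ to obtain $|g_i(x)-g_j(x)|<\varepsilon$. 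Hence $(g_j)$ is uniformly Cauchy and therefore converges in the complete space $C[a,b]$, which establishes relative compactness of $U$.

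\textbf{Main obstacle.} The only genuinely delicate point is the diagonal extraction combined with the equicontinuity-driven passage from convergence on a dense subset to uniform convergence; once this mechanism is set up, everything else is routine triangle-inequality bookkeeping. As this is a classical result of functional analysis, in the present paper it is simply quoted from \cite{KST06} rather than proved.
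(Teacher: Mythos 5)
Your proof is correct: both directions are the standard Arzel\`a--Ascoli argument (necessity via total boundedness of $\overline{U}$ and an $\varepsilon/3$ estimate against finitely many ball centers, sufficiency via a diagonal extraction over a countable dense subset of $[a,b]$ followed by the equicontinuity-based upgrade to a uniformly Cauchy subsequence, using completeness of $C[a,b]$). Note, however, that the paper itself offers no proof of this statement: it is recorded in the appendix purely as a classical theorem quoted from \cite{KST06}, so there is no argument in the paper to compare against; your write-up simply supplies the standard proof that the citation replaces, and it does so without any gaps.
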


In this statement,
\begin{itemize}
    \item $U\subset C[a,b]$ is uniformly bounded means that there exists a number $C$ such that $$|\varphi(x)|\leq C$$ for all $x\in[a,b]$ and for all $\varphi\in U,$ and
    \item $U\subset C[a,b]$ is equicontinuous means that: for every $\varepsilon>0$ there is a $\delta=\delta(\varepsilon)>0$ such that $$|\varphi(x_1)-\varphi(x_2)|<\varepsilon$$
  holds for all $x_1,\,x_2\in [a,b]$ such that $|x_1-x_2|<\delta$ and for all $\varphi\in U.$
\end{itemize}

\begin{thm}\label{Th:ShF}\cite[Theorem 1.7]{KST06}[Shauder's fixed point theorem]
    Let $U$ be a closed convex subset of $C[a,b],$ and let $A:U\rightarrow U$ be the map such that the set $\{Au:\;u\in U\}$ is relatively compact in $C[a,b].$ 
    Then the operator $A$ has at least one fixed point $u^*\in U$ i.e. $Au^*=u^*.$
\end{thm}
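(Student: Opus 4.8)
The plan is to prove this by finite-dimensional approximation, reducing the infinite-dimensional fixed point problem to Brouwer's theorem in $\mathbb{R}^m$. (I take for granted, as is standard and implicit in the cited formulation, that $A$ is continuous on $U$; without continuity the conclusion fails.) First I would pass to the compact set $K:=\overline{\{Au:u\in U\}}$: since $A(U)\subseteq U$ and $U$ is closed we have $K\subseteq U$, and the assumed relative compactness of $\{Au:u\in U\}$ makes $K$ compact in $C[a,b]$.

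The central device is the Schauder projection. Fix $\varepsilon>0$ and, using compactness of $K$, choose a finite $\varepsilon$-net $y_1,\dots,y_n\in K\subseteq U$. Define, for $y\in K$,
$$\lambda_i(y)=\max\{0,\varepsilon-\|y-y_i\|_{C[a,b]}\},\qquad P_\varepsilon(y)=\frac{\sum_{i=1}^n\lambda_i(y)\,y_i}{\sum_{i=1}^n\lambda_i(y)}.$$
Because the $y_i$ form an $\varepsilon$-net, at least one $\lambda_i(y)$ is positive for every $y\in K$, so the denominator is bounded away from $0$ and $P_\varepsilon$ is continuous; moreover $P_\varepsilon(y)$ is a convex combination of precisely those $y_i$ with $\|y-y_i\|<\varepsilon$, which yields the uniform estimate $\|P_\varepsilon(y)-y\|_{C[a,b]}\le\varepsilon$ on $K$. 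Setting $V:=\mathrm{conv}\{y_1,\dots,y_n\}$, convexity of $U$ gives $V\subseteq U$, and $V$ is a compact convex subset of the finite-dimensional span of the $y_i$.

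Next I would apply Brouwer's theorem to $P_\varepsilon\circ A$ restricted to $V$: for $v\in V$ we have $Av\in A(U)\subseteq K$ and hence $P_\varepsilon(Av)\in V$, so $P_\varepsilon\circ A:V\to V$ is a continuous self-map of a set homeomorphic to a compact convex subset of some $\mathbb{R}^m$. Brouwer then furnishes a fixed point $u_\varepsilon\in V$ with $P_\varepsilon(Au_\varepsilon)=u_\varepsilon$, and the approximation estimate gives
$$\|u_\varepsilon-Au_\varepsilon\|_{C[a,b]}=\|P_\varepsilon(Au_\varepsilon)-Au_\varepsilon\|_{C[a,b]}\le\varepsilon.$$
Taking $\varepsilon=1/k$ produces approximate fixed points $u_k$ with $\|u_k-Au_k\|_{C[a,b]}\to 0$. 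Since $Au_k\in K$ and $K$ is compact, a subsequence satisfies $Au_{k_j}\to z$ for some $z\in K\subseteq U$; then $u_{k_j}=Au_{k_j}+(u_{k_j}-Au_{k_j})\to z$, and continuity of $A$ forces $Au_{k_j}\to Az$, whence $Az=z$ with $z\in U$, the desired fixed point.

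The main obstacle is the construction and verification of the Schauder projection: keeping the denominator bounded away from zero, confirming that the image lands in the convex hull $V\subseteq U$ (this is exactly where convexity of $U$ is used), and establishing the uniform bound $\sup_{y\in K}\|P_\varepsilon(y)-y\|_{C[a,b]}\le\varepsilon$, together with the clean reduction to Brouwer's theorem on the finite-dimensional set $V$. Once these are in hand, the passage to the limit is routine and uses only compactness of $K$ and continuity of $A$.
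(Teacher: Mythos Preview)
Your argument is the standard and correct proof of Schauder's fixed point theorem via Schauder projections and Brouwer's theorem; the assumption that $A$ is continuous is indeed implicit in the cited formulation and is required. However, note that the paper does not prove this statement at all: it is recorded in the Appendix as a classical result quoted from \cite[Theorem~1.7]{KST06}, with no accompanying argument. So there is nothing in the paper to compare your proof against --- you have supplied a full (and valid) proof where the paper simply cites the literature.
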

\begin{thm}\label{LFB}\cite[p. 77]{KF} Let $H$ be Hilbert space.
  Then linear functional $F:H\rightarrow \mathbb{R}$ is continuous if and only if it is bounded on $H.$  
\end{thm}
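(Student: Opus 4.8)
The plan is to prove the two implications of the equivalence separately, after fixing the meaning of the terms: a linear functional $F:H\to\mathbb{R}$ is \emph{bounded} if there is a constant $C\geq 0$ such that $|F(x)|\leq C\|x\|_H$ for every $x\in H$, and \emph{continuous} is understood in the usual metric sense induced by the norm $\|\cdot\|_H$. I would first observe that by linearity $F(0)=0$, which is the fact that makes continuity at a single point (the origin) suffice.

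For the direction "bounded implies continuous," I would use linearity to upgrade boundedness to a Lipschitz estimate. If $|F(x)|\leq C\|x\|_H$ for all $x$, then for arbitrary $x,y\in H$ we have
\[
|F(x)-F(y)|=|F(x-y)|\leq C\,\|x-y\|_H,
\]
so $F$ is Lipschitz with constant $C$ and hence (uniformly) continuous. This direction is immediate.

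For the converse, "continuous implies bounded," I would exploit continuity only at the origin together with homogeneity. Since $F$ is continuous and $F(0)=0$, applying the definition of continuity at $0$ with $\varepsilon=1$ yields a $\delta>0$ such that $\|x\|_H\leq\delta$ forces $|F(x)|\leq 1$. Then for an arbitrary nonzero $x\in H$ I would rescale: the vector $z=\delta x/\|x\|_H$ has $\|z\|_H=\delta$, so $|F(z)|\leq 1$, and by linearity
\[
|F(z)|=\frac{\delta}{\|x\|_H}\,|F(x)|\leq 1
\qquad\Longrightarrow\qquad
|F(x)|\leq \frac{1}{\delta}\,\|x\|_H .
\]
Setting $C=1/\delta$ and noting the estimate is trivial at $x=0$ gives boundedness of $F$ on $H$.

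There is no genuine obstacle here; the only nontrivial idea is the rescaling (homogeneity) step in the second direction, so I would make sure to state it cleanly. I would also remark that neither implication uses the inner product or completeness of $H$: only the norm and the linearity of $F$ enter, so the statement in fact holds verbatim for any normed space, and the Hilbert-space hypothesis is stronger than needed for this particular equivalence.
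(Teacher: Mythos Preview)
Your proof is correct and is the standard argument for this classical fact. The paper does not give its own proof of this statement: it is recorded in the Appendix as a quoted result from Kolmogorov--Fomin \cite[p.~77]{KF}, so there is no in-paper argument to compare against. Your remark that only the normed-space structure is used (not the inner product or completeness) is accurate and worth keeping.
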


\end{document}